\DeclareMathOperator{\E}{E}
\DeclareMathOperator{\Var}{Var}
\DeclareMathOperator{\Cov}{Cov}
\DeclareMathOperator{\rank}{rank}
\newtheorem{theorem}{Theorem}[section]
\newtheorem{lemma}[theorem]{Lemma}
\newtheorem{corollary}[theorem]{Corollary}
\newcommand{\ignore}[1]{}
\newcommand{\spaceD}{{\mathbb{D}}}
\newcommand{\blind}{0}
\begin{document}

\title{Testing for Change in Stochastic Volatility with Long Range Dependence}
\if0\blind
{\author{Annika Betken\thanks{Research supported by the German National Academic Foundation and Collaborative Research Center SFB 823 {\em Statistical modelling of nonlinear dynamic processes}.
}\\
Faculty of Mathematics\\
Ruhr-Universit\"at Bochum\\
{\tt annika.betken@rub.de}  \and Rafa{\l} Kulik\\
Department of Mathematics\\ and Statistics\\
University of Ottawa\\
{\tt rkulik@uottawa.ca}
}
} \fi

\bigskip
\maketitle
\newpage
\doublespacing
\begin{abstract}
In this paper, change-point problems for long memory stochastic volatility models are considered.
A general testing problem which includes various alternative hypotheses is discussed.
Under the hypothesis of stationarity
 the limiting behavior of  CUSUM- and Wilcoxon-type test statistics is derived.  In this context,  a limit theorem for the two-parameter empirical process of long memory stochastic volatility time series is proved.
In particular, it is shown that the asymptotic distribution of  CUSUM test statistics may not be affected by long memory, unlike Wilcoxon test statistics which are typically influenced by long range dependence.
To avoid the estimation of nuisance parameters in applications,  the usage of self-normalized test statistics is proposed.
The theoretical results  are accompanied by simulation studies which characterize the finite sample behavior of the considered testing procedures when testing for changes in mean, in variance, and in the tail index. 
\end{abstract}

\noindent%
{\it Keywords:}  long memory stochastic volatility; change point tests; empirical process limit theorem; self-normalization %3 to 6 keywords, that do not appear in the title
\vfill

\newpage

\section{Introduction}
One of the most often observed phenomena in financial data is the fact that   the
 log-returns of stock-market prices  appear to be uncorrelated, whereas the absolute log-returns or squared log-returns tend to be  correlated  or even exhibit long range dependence. Another characteristic of financial time series is  the existence of heavy tails in the sense that the marginal tail distribution behaves like a regularly varying function. Both of these features of empirical data can be covered by the so-called long memory stochastic volatility model (LMSV, in short), with its original version introduced in \cite{breidt:crato:delima:1998}.
For this we  assume that the data generating process $\{X_j,j\geq 1\}$ satisfies
\begin{align}
X_j=\sigma(Y_j)\varepsilon_j,\; \ \ j\geq 1\;,\label{eq: LMSV}
\intertext{
where
\begin{itemize}
\item[$\bullet$] $\{\varepsilon_j,j\geq 1\}$ is an i.i.d. sequence with mean zero;
\item $\{Y_j,j\geq 1\}$ is a stationary, long range dependent (LRD) Gaussian process;
\item[$\bullet$] $\sigma(\cdot)$ is a non-negative measurable function, not equal to $0$.
\end{itemize}
Note that within this model long memory results from the  subordinated Gaussian sequence $\{Y_j,j\geq 1\}$ only.
More precisely, we assume that
$\{Y_j,j\geq 1\}$ admits a linear representation with respect to an  i.i.d. Gaussian sequence $\{\eta_j,j\geq 0\}$ with $\E (\eta_1)=0$, $\Var(\eta_1)=1$, i.e.}
%\begin{align}\label{eq:def-y}
Y_j=\sum_{k=1}^{\infty}c_k\eta_{j-k}\;,\ \ j\geq 1\;,
\intertext{with $\sum_{k=1}^\infty c_k^2=1$
and}
\gamma_{Y}(k)=\Cov(Y_j, Y_{j+k})=k^{-D}L_{\gamma}(k),
\end{align}
where $D\in \left(0, 1\right)$ and $L_{\gamma}$ is slowly varying at infinity. Also, we assume that
\begin{itemize}
\item[$\bullet$] $\{(\varepsilon_j,\eta_j),j\geq 1\}$ is a sequence of i.i.d. vectors.
\end{itemize}
The above set of assumptions we will call collectively \textbf{LMSV model}. 

The tail behavior  of the sequence $\{X_j,j\geq 1\}$ can be related to the tail behavior  of $\{\varepsilon_j, j\geq 1\}$ or $
\{\sigma(Y_j), j\geq 1\}$ or both. Here, we will specifically assume that
\begin{itemize}
\item the random variables $\{\varepsilon_j, j\geq 1\}$ have  a marginal distribution with regularly varying right tail, i.e. $\bar{F}_{\varepsilon}(x):=P(\varepsilon_1>x)=x^{-\alpha}L(x)$ for some $\alpha>0$ and a slowly varying function $L$, such that the following tail balance condition holds:
\begin{align*}
\lim_{x\to\infty}\frac{P(\varepsilon_1>x)}{P(|\varepsilon_1|>x)}=p=1-\lim_{x\to\infty}\frac{P(\varepsilon_1<-x)}{P(|\varepsilon_1|>x)}
\end{align*}
for some $p\in (0,1]$;
\item we have
$E\left[\sigma^{\alpha+\delta}(Y_1)\right]<\infty$
for some $\delta >0$.
\end{itemize}
Under these conditions,  it follows by Breiman's Lemma (see \cite[Proposition 7.5]{resnick:2007}) that
\begin{align}\label{eq:breiman}
P(X_1>x)\sim E[\sigma^{\alpha}(Y_1)]P(\varepsilon_1>x),
\end{align}
i.e. the process $\{X_j,j\geq 1\}$ inherits the tail behavior  from $\{\varepsilon_j,j\geq 1\}$.

We would like to point out here that
in the literature the usage of the term LMSV often presupposes that the sequences $\{Y_j,j\geq 1\}$ and $\{\varepsilon_j,j\geq 1\}$ are  independent. In this paper, we will consider a more general model:  instead of claiming mutual independence of  $\{Y_j,j\geq 1\}$ and $\{\varepsilon_j,j\geq 1\}$, we only assume that $\{{\left(\eta_j, \varepsilon_j\right)},j\geq 1\}$ is a sequence of independent  random vectors.
Especially, this implies that for a fixed index
$j$ the random variables $\varepsilon_j$ and $Y_j$ are independent, while $Y_j$ may depend on  $\left\{\varepsilon_i, i<j\right\}$.
In many cases, this version of the LMSV model is referred to as LMSV with leverage.

\subsection{Change-point detection under long memory}
One of the problems related to financial data is to detect structural changes in a time series $\{X_j,j\geq 1\}$. Although the  problem has been extensively studied for independent random variables (see an excellent book \cite{csorgo:horvath:1997}) or in case of weakly dependent data, the issue has not been fully resolved for time series with long memory. The researchers focused rather on justifying that the observed long range dependence is real or that it is due to changes in weakly dependent sequences (so-called spurious long memory); see e.g. \cite{berkes:horvath:kokoszka:shao:2006} and Section 7.9.1 of \cite{beran:kulik:2013} for further references on the latter issue.

As for the testing changes in long memory sequences, one  of the first paper seems to be \cite{horvath:kokoszka:1997}, where the authors showed that long range dependence affects the asymptotic behavior of the CUSUM statistics for changes in the mean.
For the general testing problem with a change in the marginal distribution under  the alternative hypothesis, the paper \cite{giraitis:leipus:surgailis:1996} considers Kolmogorov - Smirnov type
change-point tests and change-point estimators for  long memory moving average processes.
For changes in the mean of long range dependent time series \cite{horvath:kokoszka:1997} and \cite{betken:2016b} consider estimators for the change-point location based on CUSUM and Wilcoxon statistics.
Under the assumption of  converging change-point alternatives  in LRD time series,  the asymptotic behavior of Kolmogorov-Smirnov and Cram\'{e}r-von Mises type test statistics  has also been investigated by \cite{tewes:2015}.
 Likewise, in \cite{dehling:rooch:taqqu:2013} the authors show that the Wilcoxon test is always affected by long memory. In fact, in the case of Gaussian long memory data, the asymptotic relative efficiency of the Wilcoxon test and the CUSUM test is 1.

In case of long range dependence, the normalization and  the limiting distribution of test statistics typically depend on unknown multiplicative factors or parameters related to the dependence structure of the data generating processes.
To bypass estimation of these quantities, the concept of self-normalization has recently been applied to several testing procedures in change-point analysis: 
\cite{shao:zhang:2010} define a self-normalized Kolmogorov-Smirnov test statistic that
serves to identify changes in the mean of short range dependent time series.
 \cite{shao:2011} adopted the same approach to define an alternative normalization for the
CUSUM test;  \cite{betken:2016} considers a self-normalized version of the
Wilcoxon change-point test proposed by \cite{dehling:rooch:taqqu:2013}.

The main message from the above discussion is that long memory typically affects the limiting behaviour of different statistics used in change-point testing.
We refer also to Section 7.9 of \cite{beran:kulik:2013} for further results on change-point detection for long memory processes.  

\subsection{Change-point detection for LMSV}

In this paper, we study CUSUM and Wilcoxon tests for the LMSV model and discuss particular cases of testing for changes in the mean, in the variance and in the tail index.  Although the variance and the tail index can be regarded as the mean of transformed random variables, we observe different effects for each of the three quantities. In particular,
the main findings of our paper are as follows:
\begin{itemize}
\item[{\rm A-1:}] CUSUM tests for a change in the mean in  LMSV time series are typically not affected by long memory (see Corollary \ref{Cor:partial sum} and Example \ref{sec:change-in-mean}). This is different than the findings in \cite{horvath:kokoszka:1997} for subordinated Gaussian processes;
\item[{\rm A-2:}]  Wilcoxon tests for a change in the mean of LMSV models are typically affected by long memory (see Corollary \ref{cor:wilcoxon} and Example \ref{sec:change-in-mean}). This is in line with the findings for subordinated Gaussian processes; cf. \cite{dehling:rooch:taqqu:2013}.

\item[{\rm B:}] CUSUM and Wilcoxon tests for a change in the variance of LMSV models are typically affected by long memory; see Section \ref{sec:change-in-variance}.

\item[{\rm C:}] CUSUM and Wilcoxon tests for a change in the tail index of LMSV models are typically affected by long memory; see Section \ref{sec:examples-tail}
\end{itemize}
The paper is structured as follows:
in Section \ref{sec:prel} we collect some results on subordinated Gaussian processes. In Section \ref{sec:change-point problem} we discuss the change-point problem. In particular, we consider CUSUM  and Wilcoxon test. The asymptotic behavior of the former is a direct consequence of the existing results, while the latter requires a new theorem (see Theorem \ref{thm:wilcoxon}) on the limiting behavior of empirical processes based on LMSV data. Section \ref{sec:examples} is devoted to examples in case of testing for changes in the mean, in the variance and in the tail. Since the test statistics and/or limiting distributions involve unknown quantities, self-normalization is considered in Section \ref{sec:self-norm}. In fact, in Section \ref{sec:simulations} we perform simulation studies and indicate that self-normalization provides robustness. %Finally, in Section \ref{sec:data} we analyse change-point detection for real data.

\section{Preliminaries}\label{sec:prel}

\subsection{Some properties of subordinated Gaussian sequences}\label{sec:gaussian}
The main tool for studying the asymptotic behavior of  subordinated Gaussian sequences is the Hermite expansion.
For a stationary, long range dependent Gaussian process $\{Y_j,j\geq 1\}$ and a measurable function $g$ such that $E\left(g^2(Y_1)\right)<\infty$ the corresponding Hermite expansion is defined by
\begin{align*}
g(Y_1)-\E\left(g(Y_1)\right)=\sum\limits_{q=m}^{\infty}\frac{J_q(g)}{q!}H_q(Y_1) \;,
\end{align*}
where $H_q$ is the $q$-th Hermite polynomial,
$$J_q(g)=\E \left(g(Y_1)H_q(Y_1)\right)$$
and $$m=\inf\left\{q\geq 1 \left|\right. J_q(g)\not=0\right\}\;.$$
The integer $m$ is called the Hermite rank of $g$ and we refer to $J_q(g)$ as the $q$-th Hermite coefficient  of $g$.

We will also consider the Hermite expansion of the function class
$\left\{1_{\left\{g(Y_1)\leq x\right\}}-F_{g(Y_1)}(x), \ x\in \mathbb{R}\right\}\;,$
where $F_{g(Y_1)}$ denotes the distribution function of $g(Y_1)$. For fixed $x$,  we have
\begin{align*}
&1_{\left\{g(Y_1)\leq x\right\}}-F_{g(Y_1)}(x)=\sum\limits_{q=m}^{\infty}\frac{J_q(g;x)}{q!}H_q(Y_1)\;
\end{align*}
with
$
J_q(g;x)=\E \left(1_{\left\{g(Y_1)\leq x\right\}}H_q(Y_1)\right)\;.
$
The Hermite rank corresponding to this function class is defined by $m=\inf_x m(x)$, where $m(x)$ denotes the Hermite rank of $1_{\left\{g(Y_1)\leq x\right\}}-F_{g(Y_1)}(x)$. We refer to \cite{beran:kulik:2013} for further details. 

 The asymptotic behavior of partial sums of subordinated Gaussian sequences is characterized in \cite{taqqu:1979}. Due to the functional non-central limit theorem in that paper,
\begin{align}\label{eq:fclt-partial-sums}
\frac{1}{d_{n,m}}\sum\limits_{j=1}^{\lfloor nt\rfloor}g(Y_j)\Rightarrow \frac{J_m(g)}{m!}Z_m(t), \ 0\leq t\leq 1,
\end{align}
where $Z_m(t)$, $0\leq t\leq 1$,  is an $m$-th order Hermite process,
\begin{align*}
d_{n,m}^2=\Var\left(\sum\limits_{j=1}^nH_m(Y_j)\right)\sim c_mn^{2-mD}L^m(n), \ c_m=\frac{2 m!}{(1-Dm)(2-Dm)},
\end{align*}
 and the convergence holds in $\spaceD([0,1])$, the space of functions that are right-continuous with left limits. In fact, the limiting behavior in (\ref{eq:fclt-partial-sums}) is the same as that of the corresponding partial sums based on $\{H_m(Y_j),j\geq 1\}$:
\begin{align}\label{eq:hermite-conv}
J_m(g)\frac{1}{d_{n,m}}\sum\limits_{j=1}^{\lfloor nt\rfloor}H_m(Y_j)\Rightarrow \frac{J_m(g)}{m!}Z_m(t), \ 0\leq t\leq 1\;.
\end{align}
Moreover, the functional central limit theorem for the empirical processes was established in \cite{dehling:taqqu:1989}.
Specifically,
\begin{align}\label{eq:weak-reduction-principle}
\sup\limits_{-\infty\leq x\leq \infty}\sup\limits_{0\leq t\leq 1}d_{n,m}^{-1}\left\{\lfloor nt\rfloor\left(G_{\lfloor nt\rfloor}(x)- \E\left( G_{\lfloor nt\rfloor}(x)\right)\right)-J_m(g;x)\sum\limits_{j=1}^{\lfloor nt\rfloor}H_m(Y_j)\right\}\overset{P}{\longrightarrow}0\;,
\end{align}
where
$G_l(x)=\frac{1}{l}\sum_{j=1}^l1_{\left\{g(Y_j)\leq x\right\}}$
is the empirical distribution function of the  sequence $\{g(Y_j),j\geq 1\}$ and \enquote{$\overset{P}{\longrightarrow}$} denotes convergence in probability. Thus, the empirical process
$$
d_{n,m}^{-1}\lfloor nt\rfloor\left(G_{\lfloor nt\rfloor}(x)- \E\left( G_{\lfloor nt\rfloor}(x)\right)\right)\;, \ \ x\in \left[-\infty,\infty\right], \ t\in [0,1]\;,
$$
converges in $\spaceD([-\infty,\infty]\times [0,1])$ to $ J_m(g;x) Z_m(t)$.

We refer the reader to \cite{dehling:taqqu:1989}, \cite{taqqu:1979} and \cite{beran:kulik:2013} for more details.

\section{Change-point problem}\label{sec:change-point problem}
Given the observations $X_1, \ldots, X_n$ and a function $ \psi$, we define $\xi_j=\psi(X_j)$, $j=1, \ldots, n$, and we consider the testing problem:
\begin{align*}
&H_0: \E(\xi_1)=\cdots=\E(\xi_n) \;, \ \  \\
&H_1: \exists \ k\in\left\{1, \ldots, n-1\right\}\ \  \mbox{\rm such that }\E (\xi_1)=\cdots=\E(\xi_{k})\not=\E(\xi_{k+1})=\cdots=\E(\xi_n)\;.
\end{align*}
We choose  $\psi$ according to the specific change-point problem considered. Possible choices include:
\begin{itemize}
\item $\psi(x)=x$ in order to detect changes in the mean of the observations $X_1, \ldots, X_n$  (change in location);
\item $\psi(x)=x^2$ in order to detect changes in the variance of the observations $X_1, \ldots, X_n$ (change in volatility);
\item $\psi(x)=\log(x^2)$ or $\psi(x)=\log(|x|)$ in order to
detect changes in the index $\alpha$ of heavy-tailed observations (change in the tail index).
\end{itemize}
It is obvious that $\psi(x)=x$ and $\psi(x)=x^2$ lead to testing for a change in the mean and a change in the variance, respectively. The choice $\psi(x)=\log(|x|)$ requires an additional comment. We note that (\ref{eq:breiman}) describes only the asymptotic tail behavior  of $X_1$. For the purpose of this paper, we shall pretend that $P\left(|X_1|>x\right)=c^{\alpha}x^{-\alpha}$, $x>c$, for some $c>0$. Then the maximum likelihood estimator of $(1/\alpha)$, the reciprocal of the tail index, is
\begin{equation}
\frac{1}{n}\sum_{j=1}^n\log\left(|X_j|/c\right)\;.
\end{equation}
This estimator is used in the CUSUM test statistic. To resolve the problem of change-points in the tail index in full generality, we need to employ a completely different technique, based on the so-called tail empirical processes (see \cite{kulik:soulier:2011}). This will be done in a subsequent paper.

In any case,  the following test statistics may be applied  in order to decide on the change-point problem $(H_0, H_1)$:
\begin{itemize}
\item The CUSUM test rejects the hypothesis for large values of the test statistic $C_n=\sup\limits_{0\leq\lambda\leq 1}C_n(\lambda)$, where
\begin{align}\label{eq:CUSUM-stat}
C_n(\lambda)=\left|\sum\limits_{j=1}^{\lfloor n\lambda\rfloor}\psi(X_j)-\frac{\lfloor n\lambda\rfloor}{n}\sum\limits_{j=1}^n\psi(X_j)\right|\;.
\end{align}
\item The Wilcoxon test rejects the hypothesis for large values of the test statistic $W_n=\sup\limits_{0\leq\lambda\leq 1}W_n(\lambda)$, where
\begin{align}\label{eq:Wilcoxon-stat}
W_n(\lambda)=\left|\sum\limits_{i=1}^{\lfloor n\lambda\rfloor}\sum\limits_{j=\lfloor n\lambda\rfloor +1}^n\left( 1_{\left\{\psi(X_i)\leq \psi(X_j)\right\}}-\frac{1}{2}\right)\right|\;.
\end{align}
\end{itemize}
The goal of this paper is to obtain limiting distributions for the CUSUM and the Wilcoxon test statistic
in case of time series that follow the LMSV model.
%In order to do this, we proceed as follows:
%\begin{enumerate}
%\item In Section \ref{sec:lm} we discuss regularly varying long memory time series models and their basic properties;
%\end{enumerate}

\subsection{CUSUM Test for LMSV}\label{sec:CUSUM}

In order to determine the asymptotic behavior of the CUSUM test statistic computed with respect to the observations $\psi(X_1), \ldots, \psi(X_n)$, we have to consider the partial sum process $\sum_{j=1}^{\lfloor nt\rfloor}\left(\psi(X_j)-\E\left(\psi(X_j)\right)\right)$.

For the observations $X_1, \ldots, X_n$ that satisfy the LMSV model, the asymptotic behavior  of the partial sum process is described by Theorem 4.10 in \cite{beran:kulik:2013} and hence is stated without  proof.
In order to formulate the result, we introduce the following notation:
\begin{align*}
\mathcal{F}_{j}=\sigma\left(\varepsilon_{j}, \varepsilon_{j-1}, \ldots, \eta_{j}, \eta_{j-1}, \ldots\right),
\end{align*}
i.e. $\mathcal{F}_j$ denotes the $\sigma$-field generated by the random variables $\varepsilon_{j}, \varepsilon_{j-1}, \ldots, \eta_{j}, \eta_{j-1}, \ldots$.
Due to this construction,  $\varepsilon_j$ is independent of $\mathcal{F}_{j-1}$ and $Y_j$ is $\mathcal{F}_{j-1}$-measurable.

\begin{theorem}\label{thm:partial-sums}
Assume that  $\{X_j,j\geq 1\}$ follows the LMSV model.  Furthermore, assume  that  $\E (\psi^2(X_1))<\infty$. Define the function $\Psi$ by $\Psi(y)=\E\left(\psi(\sigma(y)\varepsilon_1)\right)$. Denote by $m$ the Hermite rank of $\Psi$ and by $J_m(\Psi)$ the corresponding Hermite coefficient.
\begin{enumerate}
\item If $\E(\psi(X_1)\left|\right. \mathcal{F}_0)\neq 0$ and $mD<1$, then
\begin{align*}
\frac{1}{d_{n,m}}\sum_{j=1}^{\lfloor nt\rfloor}\left(\psi(X_j)-\E(\psi(X_j))\right)\Rightarrow  \frac{J_m(\Psi)}{m!}Z_m(t)\;, \ \ t\in [0,1]\;,
\end{align*}
in $\spaceD([0,1])$.
\item  If $\E(\psi(X_1)\left|\right. \mathcal{F}_0)=0$, then
\begin{align*}
\frac{1}{\sqrt{n}}\sum_{j=1}^{\lfloor nt\rfloor}\psi(X_j)\Rightarrow   \sigma B(t)\;, \ \ t\in [0,1]\;,
\end{align*}
in $\spaceD([0,1])$,
where $B$ denotes a Brownian motion process and $\sigma^2= \E (\psi^2(X_1))$.
\end{enumerate}
\end{theorem}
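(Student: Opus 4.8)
\medskip
\noindent\emph{Proof strategy.}
The plan is to split $\psi(X_j)$ into a part that is a functional of the long‑range dependent Gaussian sequence $\{Y_j\}$ alone and a martingale part, and then to let the value of $mD$ decide which of the two governs the limit. With $\Psi(y)=\E(\psi(\sigma(y)\varepsilon_1))$ as in the statement, the dependence structure of the LMSV model gives $\E(\psi(X_j)\mid\mathcal F_{j-1})=\Psi(Y_j)$: indeed $Y_j$ is $\mathcal F_{j-1}$-measurable while $\varepsilon_j$ is independent of $\mathcal F_{j-1}$, so freezing $Y_j$ and integrating out $\varepsilon_j$ produces $\Psi(Y_j)$. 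In particular $\E(\psi(X_1)\mid\mathcal F_0)=\Psi(Y_1)$ and $\E(\Psi(Y_1))=\E(\psi(X_1))$, which leads to the exact decomposition
\begin{align*}
\psi(X_j)-\E(\psi(X_j))=\bigl(\Psi(Y_j)-\E(\Psi(Y_1))\bigr)+D_j,\qquad D_j:=\psi(X_j)-\E(\psi(X_j)\mid\mathcal F_{j-1}).
\end{align*}
By construction $\E(D_j\mid\mathcal F_{j-1})=0$, so $\{D_j,\mathcal F_j\}$ is a martingale difference sequence; it is stationary and ergodic, being a fixed measurable functional of the i.i.d.\ sequence $\{(\varepsilon_i,\eta_i)\}$ evaluated at successive shifts, and $\E(D_1^2)=\E(\psi^2(X_1))-\E(\Psi^2(Y_1))<\infty$ since $\E(\psi^2(X_1))<\infty$.

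For the first summand I would apply the functional non‑central limit theorem (\ref{eq:fclt-partial-sums}) of \cite{taqqu:1979} to the centred, square‑integrable functional $\Psi(Y_j)-\E(\Psi(Y_1))$, whose Hermite rank is $m$ and whose leading Hermite coefficient is $J_m(\Psi)$, obtaining
\begin{align*}
\frac{1}{d_{n,m}}\sum_{j=1}^{\lfloor nt\rfloor}\bigl(\Psi(Y_j)-\E(\Psi(Y_1))\bigr)\Rightarrow\frac{J_m(\Psi)}{m!}\,Z_m(t)\qquad\text{in }\spaceD([0,1]).
\end{align*}
For the second summand I would invoke the functional central limit theorem for stationary ergodic martingale difference sequences: $n^{-1/2}\sum_{j=1}^{\lfloor nt\rfloor}D_j\Rightarrow\sigma_D B(t)$ in $\spaceD([0,1])$ with $\sigma_D^2=\E(D_1^2)$, the Lindeberg‑type condition being supplied by stationarity, ergodicity and $\E(D_1^2)<\infty$ (so that $n^{-1}\sum_{j\le n}\E(D_j^2\mid\mathcal F_{j-1})\to\E(D_1^2)$ by the ergodic theorem).

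It then remains to compare the two normalizations. Since $d_{n,m}^2\sim c_m n^{2-mD}L^m(n)$, the ratio $\sqrt n/d_{n,m}$ is asymptotically of order $n^{(mD-1)/2}L^{-m/2}(n)$. In case (1) the hypothesis $mD<1$ forces this ratio to $0$, so $d_{n,m}^{-1}\sum_{j\le nt}D_j=(\sqrt n/d_{n,m})\cdot n^{-1/2}\sum_{j\le nt}D_j$ tends to $0$ uniformly in probability on $[0,1]$, the second factor being tight in $\spaceD([0,1])$ by the martingale FCLT; meanwhile the hypothesis $\E(\psi(X_1)\mid\mathcal F_0)=\Psi(Y_1)\neq0$ is exactly what guarantees that $\Psi$ has a finite Hermite rank $m$ with $J_m(\Psi)\neq0$, so that the first summand is non‑degenerate, and Slutsky's theorem in $\spaceD([0,1])$ yields the convergence to $(J_m(\Psi)/m!)Z_m(t)$. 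In case (2) the hypothesis $\E(\psi(X_1)\mid\mathcal F_0)=\Psi(Y_1)=0$ a.s.\ forces $\Psi\equiv0$, so the first summand vanishes identically, $\psi(X_j)=D_j$ has mean zero, and the martingale FCLT gives $n^{-1/2}\sum_{j\le nt}\psi(X_j)\Rightarrow\sigma B(t)$ with $\sigma^2=\E(D_1^2)=\E(\psi^2(X_1))$, as claimed.

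Since both ingredients are essentially off‑the‑shelf (Taqqu's theorem is even quoted in the paper, and the martingale FCLT is classical), the real work is in the stitching, and the step I expect to demand the most care is the rigorous verification that $\{D_j,\mathcal F_j\}$ meets the hypotheses of the functional martingale CLT and that, in case (1), the normalized martingale process is negligible \emph{as an element of} $\spaceD([0,1])$ — i.e.\ that tightness of $n^{-1/2}\sum_{j\le n\cdot}D_j$ together with $\sqrt n/d_{n,m}\to0$ suffices to add the two weak limits. One should also keep in mind the borderline role of the condition $mD<1$: when $mD=1$ the martingale and the Gaussian pieces are of comparable order up to slowly varying factors, which is precisely why part (1) excludes that case.
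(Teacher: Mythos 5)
Your proposal is correct and is essentially the argument behind the paper's own treatment: the paper states Theorem \ref{thm:partial-sums} without proof, citing Theorem 4.10 of Beran and Kulik (2013), which rests on exactly your decomposition into the subordinated-Gaussian term $\Psi(Y_j)-\E\left(\Psi(Y_1)\right)$ plus a stationary ergodic martingale-difference part, combined with Taqqu's FCLT, the martingale FCLT, and the comparison $\sqrt{n}=o(d_{n,m})$ when $mD<1$ — the same scheme the paper itself uses for the two-parameter empirical process in the proof of Theorem \ref{thm:wilcoxon}. The only imprecision is the remark that $\E(\psi(X_1)\mid\mathcal{F}_0)\neq 0$ by itself ``guarantees'' $J_m(\Psi)\neq 0$: strictly one needs $\Psi(Y_1)$ to be non-constant for the Hermite rank $m$ to be well defined, which the theorem statement implicitly assumes.
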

As an immediate consequence of Theorem \ref{thm:partial-sums}, we obtain the asymptotic distribution of the CUSUM statistic.
\begin{corollary}\label{Cor:partial sum}
Assume that the assumptions of Theorem \ref{thm:partial-sums} hold.
\begin{enumerate}
\item If $\E(\psi(X_1)\left|\right. \mathcal{F}_0)\neq 0$ and $mD<1$, then
\begin{align}\label{eq:CUSUM:lrd}
\frac{1}{d_{n,m}}\sup\limits_{0\leq \lambda\leq 1}C_n(\lambda)\Rightarrow  \frac{\left|J_m(\Psi)\right|}{m!}\sup\limits_{0\leq t\leq 1}\left|Z_m(t)-tZ_m(1)\right|\;.
\end{align}
\item  If $\E(\psi(X_1)\left|\right. \mathcal{F}_0)=0$
\begin{align*}
\frac{1}{\sqrt{n}}\sup\limits_{0\leq \lambda\leq 1}C_n(\lambda)\Rightarrow  \sigma\sup\limits_{0\leq t\leq 1}\left|B(t)-tB(1)\right|\;,
\end{align*}
where $B$ denotes a Brownian motion process and $\sigma^2= \E (\psi^2(X_1))$.
\end{enumerate}
\end{corollary}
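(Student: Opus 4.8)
The plan is to reduce the CUSUM process to the partial-sum process of Theorem~\ref{thm:partial-sums} and then invoke the continuous mapping theorem. Since the LMSV model is stationary, $\mu:=\E(\psi(X_1))=\E(\psi(X_j))$ for all $j$, and because $\sum_{j=1}^{\lfloor n\lambda\rfloor}\mu-\frac{\lfloor n\lambda\rfloor}{n}\sum_{j=1}^n\mu=0$ the centering may be inserted without changing $C_n(\lambda)$: writing $S_n(\lambda):=\sum_{j=1}^{\lfloor n\lambda\rfloor}(\psi(X_j)-\mu)$ we have $C_n(\lambda)=\bigl|S_n(\lambda)-\tfrac{\lfloor n\lambda\rfloor}{n}S_n(1)\bigr|$. (In case~2 one additionally notes that $\E(\psi(X_1)\mid\mathcal{F}_0)=0$ forces $\mu=\E(\psi(X_1))=0$, so that $S_n(\lambda)=\sum_{j=1}^{\lfloor n\lambda\rfloor}\psi(X_j)$ directly.)

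First I would remove the floor discrepancy. Since $\bigl|\tfrac{\lfloor n\lambda\rfloor}{n}-\lambda\bigr|\le n^{-1}$ uniformly in $\lambda$, we have $\sup_{0\le\lambda\le1}\bigl|\tfrac{\lfloor n\lambda\rfloor}{n}S_n(1)-\lambda S_n(1)\bigr|\le n^{-1}|S_n(1)|$. In case~1, Theorem~\ref{thm:partial-sums} gives $S_n(1)/d_{n,m}=O_P(1)$, while $d_{n,m}/n\sim\sqrt{c_m}\,n^{-mD/2}L^{m/2}(n)\to 0$, so $n^{-1}|S_n(1)|=o_P(d_{n,m})$; in case~2, $S_n(1)/\sqrt n=O_P(1)$ and $n^{-1/2}\to 0$ yield the analogous estimate. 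Hence, with $a_n:=d_{n,m}$ in case~1 and $a_n:=\sqrt n$ in case~2,
\[
\frac{1}{a_n}\sup_{0\le\lambda\le1}C_n(\lambda)
=\sup_{0\le\lambda\le1}\Bigl|\,a_n^{-1}S_n(\lambda)-\lambda\,a_n^{-1}S_n(1)\,\Bigr|+o_P(1).
\]

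Next I would apply the continuous mapping theorem to the functional $h:\spaceD([0,1])\to[0,\infty)$, $h(x)=\sup_{0\le t\le1}|x(t)-t\,x(1)|$ (this is finite for $x\in\spaceD$ since càdlàg functions on a compact interval are bounded). The map $h$ is continuous, with respect to the Skorokhod $J_1$ topology, at every path that is continuous at $t=1$ — in particular at every element of $C([0,1])$, because $J_1$-convergence to a continuous limit is uniform convergence and the $\sup$ together with the endpoint evaluation are continuous under uniform convergence. By Theorem~\ref{thm:partial-sums}, $a_n^{-1}S_n(\cdot)\Rightarrow \tfrac{J_m(\Psi)}{m!}Z_m(\cdot)$ in case~1, resp.\ $\sigma B(\cdot)$ in case~2, in $\spaceD([0,1])$, and the limiting process has a.s.\ continuous sample paths; the continuous mapping theorem therefore yields
\[
h\bigl(a_n^{-1}S_n\bigr)\;\Rightarrow\; \frac{|J_m(\Psi)|}{m!}\sup_{0\le t\le1}\bigl|Z_m(t)-tZ_m(1)\bigr|
\quad\text{resp.}\quad \sigma\sup_{0\le t\le1}\bigl|B(t)-tB(1)\bigr|.
\]
Combining this with the previous display via Slutsky's lemma proves both assertions.

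The bookkeeping around the floor terms and the centering is routine; the only step requiring any care is the continuity of $h$ on $\spaceD([0,1])$ at continuous paths — equivalently, that weak convergence in $J_1$ to a continuous process upgrades to the uniform setting for the functionals involved — which is classical (cf.\ Billingsley). I do not anticipate a genuine obstacle here: once Theorem~\ref{thm:partial-sums} is available, the corollary is essentially a continuous-mapping argument.
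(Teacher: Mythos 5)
Your argument is correct and is precisely the route the paper intends: the corollary is stated as an immediate consequence of Theorem \ref{thm:partial-sums}, i.e.\ the continuous mapping theorem applied to the functional $x\mapsto\sup_{0\le t\le 1}|x(t)-t\,x(1)|$, with the centering and floor terms handled exactly as you do. Your write-up simply makes explicit the routine details (continuity of the functional at continuous limit paths, negligibility of the $\lfloor n\lambda\rfloor/n-\lambda$ discrepancy) that the paper leaves implicit.
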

It is important to note that the Hermite rank of $\Psi$ does not necessarily correspond to the Hermite rank of $\sigma$; see Section \ref{sec:examples}.

\subsection{Wilcoxon test for LMSV}

For subordinated Gaussian time series  $\{g(Y_j),j\geq 1\}$, where $\{Y_j,j\geq 1\}$ is a stationary Gaussian LRD process and $g$ is a measurable function, the asymptotic distribution of the Wilcoxon test statistic $W_n$
is derived from the limiting behavior  of the two-parameter empirical process
\begin{align*}
\sum\limits_{j=1}^{\lfloor nt\rfloor}\left(1_{\left\{g(Y_j)\leq x\right\}}-F_{g(Y_1)}(x)\right)\;, \ \ x\in \left[-\infty,\infty\right]\;, \ t\in [0,1]\;,
\end{align*}
where $F_{g(Y_1)}$ denotes the distribution function of $g(Y_1)$; see \cite{dehling:rooch:taqqu:2013}.

In order to determine the asymptotic distribution  of the Wilcoxon test statistic for the LMSV model,   we need to establish an analogous result for the stochastic volatility process $\{X_j,j\geq 1\}$, i.e.
our preliminary goal is to prove a limit theorem for the two-parameter empirical process
\begin{align*}
G_n(x, t)=\sum_{j=1}^{\lfloor nt\rfloor}\left(1_{\left\{\psi(X_j)\leq x\right\}}-F_{\psi(X_1)}(x)\right),
\end{align*}
 where now $F_{\psi(X_1)}$ denotes the distribution function of $\psi(X_1)$ with $X_1=\sigma(Y_1)\varepsilon_1$.
To state the weak convergence, we introduce the following notation:
$$\Psi_x(y)=P\left(\psi(y\varepsilon_1)\leq x\right)\;.$$

 \begin{theorem}\label{thm:wilcoxon}
 Assume that  $\{X_j,j\geq 1\}$ follows the LMSV model.
 Moreover, assume that
 \begin{align}\label{eq:integrability}
 \int\frac{d}{du}P\left(\psi(u\varepsilon_1)\leq x\right)du<\infty\;.
 \end{align}
 Let $m$  denote the Hermite rank of the class  $\left\{1_{\left\{\sigma(Y_1)\leq x\right\}}-F_{\sigma(Y_1)}(x), \ x\in \mathbb{R}\right\}$. If $mD<1$, then
\begin{align}
\frac{1}{d_{n,m}}G_n(x, t)\Rightarrow \frac{J_m(\Psi_x \circ \sigma)}{m!}Z_m(t)\;,  \  x \in \left[-\infty,\infty\right], \ t\in \left[0, 1\right], \label{eq:limit_of_G}
\end{align}
in $\spaceD\left([-\infty, \infty]\times [0, 1]\right)$ .
 \end{theorem}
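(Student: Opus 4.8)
The plan is to exploit the conditional (martingale) structure of the LMSV model. With $\mathcal{F}_j$ as defined before Theorem~\ref{thm:partial-sums}, recall that $\sigma(Y_j)$ is $\mathcal{F}_{j-1}$-measurable while $\varepsilon_j$ is independent of $\mathcal{F}_{j-1}$; hence, freezing $\sigma(Y_j)$ and integrating out the independent copy $\varepsilon_j$, for every fixed $x$
\[
\E\left(1_{\{\psi(X_j)\le x\}}\mid\mathcal{F}_{j-1}\right)=\Psi_x(\sigma(Y_j))=(\Psi_x\circ\sigma)(Y_j),
\]
and, taking expectations, $\E\left((\Psi_x\circ\sigma)(Y_1)\right)=F_{\psi(X_1)}(x)$. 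This produces the decomposition $G_n(x,t)=R_n(x,t)+S_n(x,t)$ with
\[
R_n(x,t)=\sum_{j=1}^{\lfloor nt\rfloor}\left(1_{\{\psi(X_j)\le x\}}-(\Psi_x\circ\sigma)(Y_j)\right),\qquad S_n(x,t)=\sum_{j=1}^{\lfloor nt\rfloor}\left((\Psi_x\circ\sigma)(Y_j)-\E\left((\Psi_x\circ\sigma)(Y_1)\right)\right).
\]
For every fixed $x$, the summands of $R_n(x,\cdot)$ form a uniformly bounded $\{\mathcal{F}_j\}$-martingale difference sequence, while $S_n(x,\cdot)$ is the centered partial-sum process of the subordinated Gaussian sequence $\{(\Psi_x\circ\sigma)(Y_j)\}_j$. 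I would show that $S_n$ carries the stated limit and that $R_n$ is negligible at the rate $d_{n,m}$, uniformly in $(x,t)$.

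For the subordinated Gaussian part, I would pass to an integral representation: since $u\mapsto\Psi_x(u)$ has finite variation on $[0,\infty)$ by~\eqref{eq:integrability} and $\sigma\ge0$,
\[
(\Psi_x\circ\sigma)(y)-\E\left((\Psi_x\circ\sigma)(Y_1)\right)=-\int\left(1_{\{\sigma(y)\le u\}}-F_{\sigma(Y_1)}(u)\right)\frac{d}{du}\Psi_x(u)\,du,
\]
so that $S_n(x,t)$ is, up to sign, the integral of the two-parameter empirical process of $\{\sigma(Y_j)\}$ against $\frac{d}{du}\Psi_x(u)$, to which the uniform reduction principle~\eqref{eq:weak-reduction-principle} with $g=\sigma$ applies. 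Because $m=\inf_x m(x)$ is the Hermite rank of the indicator class $\{1_{\{\sigma(Y_1)\le u\}}-F_{\sigma(Y_1)}(u)\}$, the coefficients $J_q(\sigma;u)$ vanish for every $u$ when $q<m$, so $\Psi_x\circ\sigma$ has Hermite rank at least $m$ with $m$-th coefficient $J_m(\Psi_x\circ\sigma)=-\int J_m(\sigma;u)\frac{d}{du}\Psi_x(u)\,du$. Inserting~\eqref{eq:weak-reduction-principle} under the integral sign — the resulting error being integrable against $\frac{d}{du}\Psi_x(u)$, uniformly in $x$ and $t$, by~\eqref{eq:integrability} — gives
\[
\sup_x\,\sup_t\, d_{n,m}^{-1}\left|S_n(x,t)-J_m(\Psi_x\circ\sigma)\sum_{j=1}^{\lfloor nt\rfloor}H_m(Y_j)\right|\overset{P}{\longrightarrow}0.
\]
Together with~\eqref{eq:hermite-conv} and the continuous mapping theorem — using that $Z_m$ has continuous sample paths and that $x\mapsto J_m(\Psi_x\circ\sigma)$, being a difference of bounded monotone functions, is right-continuous with left limits — this yields $d_{n,m}^{-1}S_n(x,t)\Rightarrow\frac{J_m(\Psi_x\circ\sigma)}{m!}Z_m(t)$, jointly in $(x,t)$, in $\spaceD([-\infty,\infty]\times[0,1])$.

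For the martingale remainder I would establish $\sup_{x,t}d_{n,m}^{-1}|R_n(x,t)|\overset{P}{\longrightarrow}0$. For fixed $x$, Doob's $L^2$-inequality applied to the martingale $R_n(x,\cdot)$, whose increments have conditional variance at most $\tfrac{1}{4}$, gives $\E\left(\sup_t R_n(x,t)^2\right)\le n$, and $n/d_{n,m}^2\to0$ because $mD<1$; this settles finite-dimensional negligibility. To upgrade to uniformity in $x$, I would run a chaining argument over $[-\infty,\infty]$ equipped with the pseudometric $\rho(x,x')=|F_{\psi(X_1)}(x')-F_{\psi(X_1)}(x)|$, under which it is totally bounded with metric entropy $\log N(\epsilon,\rho)=O(\log(1/\epsilon))$: for $x\le x'$, the increment $R_n(x',\cdot)-R_n(x,\cdot)$ is again a martingale whose predictable quadratic variation is dominated by $\sum_{j\le\cdot}1_{\{x<\psi(X_j)\le x'\}}$, of expectation at most $n\rho(x,x')$, so a Bernstein/Freedman-type martingale inequality together with Doob's inequality controls $\sup_t|R_n(x',t)-R_n(x,t)|$ with a sub-Gaussian tail of variance proxy $O(n\rho(x,x'))$. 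Dudley's entropy bound then gives $\E\sup_{x,t}|R_n(x,t)|=O(\sqrt{n\log n})=o(d_{n,m})$ since $mD<1$. Adding the two pieces yields $d_{n,m}^{-1}G_n=d_{n,m}^{-1}S_n+o_P(1)$ uniformly in $(x,t)$, which gives the claim.

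The step I expect to be the main obstacle is this uniform control of $R_n$. The pointwise estimate handles only a single value of $x$, and a one-level bracketing is too crude: the oscillation of $R_n$ over a bracket of $F_{\psi(X_1)}$-mass $\delta$ is of order $n\delta$, which is not $o(d_{n,m})$ for fixed $\delta$ once $mD$ is close to $1$. What rescues the argument is the cancellation inside the martingale difference $1_{\{x<\psi(X_j)\le x'\}}-\left((\Psi_{x'}\circ\sigma)(Y_j)-(\Psi_{x}\circ\sigma)(Y_j)\right)$, which improves the bracket-increment bound from order $n\delta$ to order $\sqrt{n\delta}$ and makes genuine multiscale chaining work; combined with the slow (logarithmic) metric entropy of the line and the polynomial gap $\sqrt{n}=o(d_{n,m})$, this closes the estimate for the full range $mD<1$. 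Secondary technical points are checking that~\eqref{eq:integrability} supplies the uniform-in-$x$ integrability needed in the reduction step and that the limit process $\frac{J_m(\Psi_x\circ\sigma)}{m!}Z_m(t)$ indeed belongs to $\spaceD([-\infty,\infty]\times[0,1])$.
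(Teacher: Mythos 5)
Your overall skeleton is the one the paper uses: the same decomposition of $G_n$ into a martingale part and the conditional-expectation part $\sum_{j\le\lfloor nt\rfloor}\bigl((\Psi_x\circ\sigma)(Y_j)-F_{\psi(X_1)}(x)\bigr)$, the same integral representation of the latter against the empirical process of $\{\sigma(Y_j)\}$, the same appeal to the Dehling--Taqqu uniform reduction principle \eqref{eq:weak-reduction-principle} with $g=\sigma$ together with \eqref{eq:integrability}, the same identification $J_m(\Psi_x\circ\sigma)=-\int J_m(\sigma;u)\tfrac{d}{du}\Psi_x(u)\,du$, and the conclusion via \eqref{eq:hermite-conv} plus $\sqrt{n}=o(d_{n,m})$. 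The place where your argument has a genuine gap is exactly the step you flag as the main obstacle: the uniform-in-$x$ control of the martingale part.

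You assert that $\sup_t\bigl|R_n(x',t)-R_n(x,t)\bigr|$ has a sub-Gaussian tail with variance proxy $O\bigl(n\rho(x,x')\bigr)$ ``by a Bernstein/Freedman-type inequality,'' on the grounds that the predictable quadratic variation has \emph{expectation} at most $n\rho(x,x')$. Freedman's inequality needs the \emph{realized} predictable quadratic variation to be bounded by the variance proxy, and here that variation is (up to the trivial bound $p(1-p)\le p$) the partial sum $\sum_{j\le\lfloor nt\rfloor}\bigl[(\Psi_{x'}\circ\sigma)(Y_j)-(\Psi_{x}\circ\sigma)(Y_j)\bigr]$, a subordinated long-range dependent Gaussian functional whose fluctuations around its mean are of order $d_{n,m}\gg\sqrt{n}$. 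For small bracket mass $\rho$ the realized variation is therefore not $O(n\rho)$ with high probability, so the sub-Gaussian increment bound you feed into Dudley's entropy integral is unjustified; bounded increments alone only give Azuma with the useless worst-case proxy $n$. The $L^2$ bound $\E\bigl(R_n(x',1)-R_n(x,1)\bigr)^2\le n\rho(x,x')$ that you correctly obtain from martingale-difference orthogonality is true, but plain $L^2$ increment control plus entropy does not yield a sup bound by itself. The paper avoids exponential inequalities entirely: it proves $\sup_{x,t}|M_n(x,t)|=O_P(\sqrt{n})$ by verifying tightness of the two-parameter process in $\spaceD([-\infty,\infty]\times[0,1])$ via Ivanoff's criterion (canonical stopping times in each coordinate) and controlling the modulus in $x$ through Cauchy--Schwarz product-moment bounds of the form $\E\,|\Delta_1||\Delta_2|\le\frac1n\sum_j\bigl(F(x_2)-F(x_1)\bigr)$ combined with the Billingsley (1968) Theorem 12.5/15.4 maximal-inequality scheme, exploiting monotonicity in $x$. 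To repair your route you would either have to control the random quadratic variation uniformly over brackets (e.g.\ by a preliminary application of the reduction principle) before invoking Freedman, or switch to such a second-moment modulus argument; as written, the chaining step does not go through.
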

The proof of this theorem is given in Section \ref{Sec:proof}. At this moment, we derive the asymptotic distribution of the Wilcoxon statistics.
\begin{corollary}\label{cor:wilcoxon}
Under the conditions of Theorem \ref{thm:wilcoxon}
\begin{align*}
\frac{1}{nd_{n,m}}\sup\limits_{\lambda \in [0, 1]}W_n(\lambda)\Rightarrow \left|\int J_m(\Psi_x\circ \sigma)dF_{\psi(X_1)}(x)\right|\frac{1}{m!}\sup\limits_{\lambda \in [0, 1]}\left|Z_m(\lambda)-\lambda Z_m(1)\right|\;.
\end{align*}
\end{corollary}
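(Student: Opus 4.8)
The plan is to reduce the two-sample Wilcoxon statistic to the two-parameter empirical process $G_n$ of Theorem~\ref{thm:wilcoxon} by means of a Hoeffding (H\'ajek-type) decomposition. Write $\xi_j=\psi(X_j)$, $F=F_{\psi(X_1)}$ and $k=\lfloor n\lambda\rfloor$. The assumptions of Theorem~\ref{thm:wilcoxon} guarantee that $F$ is continuous, so the kernel $h(x,y)=1_{\{x\le y\}}-\tfrac12$ admits, with respect to the product measure $F\otimes F$, the decomposition $h(x,y)=\bigl(\tfrac12-F(x)\bigr)+\bigl(F(y)-\tfrac12\bigr)+r(x,y)$, where the remainder kernel $r$ is degenerate ($\int r(x,y)\,dF(x)=\int r(x,y)\,dF(y)=0$) and the constant term vanishes. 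Introducing $S_k:=\sum_{j=1}^{k}\bigl(F(\psi(X_j))-\tfrac12\bigr)$ and summing over $1\le i\le k<j\le n$ yields
\[
\sum_{i=1}^{k}\sum_{j=k+1}^{n}\Bigl(1_{\{\xi_i\le\xi_j\}}-\tfrac12\Bigr)=-n\Bigl(S_k-\tfrac{k}{n}S_n\Bigr)+R_n(k),\qquad R_n(k):=\sum_{i=1}^{k}\sum_{j=k+1}^{n}r(\xi_i,\xi_j).
\]

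The linear part is where Theorem~\ref{thm:wilcoxon} enters, through the identity $S_k=-\int G_n(x,\lambda)\,dF_{\psi(X_1)}(x)$, which follows from $\int 1_{\{\psi(X_j)\le x\}}\,dF(x)=1-F(\psi(X_j))$ and $\int F\,dF=\tfrac12$ for continuous $F$. The functional $h\mapsto\bigl(t\mapsto\int h(x,t)\,dF(x)\bigr)$ is Lipschitz, with constant $1$, from $\spaceD([-\infty,\infty]\times[0,1])$ equipped with the supremum metric into $\spaceD([0,1])$ (because $F$ is a probability measure), so applying the continuous mapping theorem to \eqref{eq:limit_of_G} gives
\[
\frac{1}{d_{n,m}}\,S_{\lfloor n\lambda\rfloor}\ \Rightarrow\ -\frac{Z_m(\lambda)}{m!}\int J_m(\Psi_x\circ\sigma)\,dF_{\psi(X_1)}(x)\quad\text{in }\spaceD([0,1]),
\]
provided $\int\lvert J_m(\Psi_x\circ\sigma)\rvert\,dF_{\psi(X_1)}(x)<\infty$, which I would check from \eqref{eq:integrability} and the Cauchy--Schwarz inequality. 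Composing with the continuous bridge map $g\mapsto\bigl(\lambda\mapsto g(\lambda)-\lambda g(1)\bigr)$, replacing $\lfloor n\lambda\rfloor/n$ by $\lambda$ (negligible uniformly in $\lambda$ since $d_{n,m}^{-1}S_n=O_P(1)$), and finally applying the continuous functional $g\mapsto\sup_{\lambda}\lvert g(\lambda)\rvert$ shows that $(nd_{n,m})^{-1}\sup_{\lambda}W_n(\lambda)$ converges to the right-hand side of the corollary, \emph{provided} the remainder term $R_n$ is asymptotically negligible.

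Proving that negligibility, i.e.\ $\sup_{0\le\lambda\le1}(nd_{n,m})^{-1}\lvert R_n(\lfloor n\lambda\rfloor)\rvert\overset{P}{\longrightarrow}0$, is the main obstacle. Because $r$ is degenerate, in the i.i.d.\ case all cross terms vanish and one has trivially $R_n(k)=O_P(n)=o_P(nd_{n,m})$ since $d_{n,m}\to\infty$; under long memory the cross-covariances $\E\bigl(r(\xi_i,\xi_j)\,r(\xi_{i'},\xi_{j'})\bigr)$ no longer vanish and must be bounded. I would do this by expanding $1_{\{\sigma(Y_1)\le x\}}$ into Hermite polynomials, invoking the reduction principle \eqref{eq:weak-reduction-principle}, and exploiting that the absence of a first-order term in $r$ forces the relevant covariances to decay at the rate of a \emph{squared} rank-$m$ Hermite term, so that $\E\bigl(R_n(k)^2\bigr)=o\bigl(n^2 d_{n,m}^2\bigr)$ uniformly in $k$; a maximal inequality over $k$ then upgrades this to the uniform statement. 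This mirrors the analogous step in \cite{dehling:rooch:taqqu:2013}, the additional difficulty here being the presence of the innovations $\varepsilon_j$ and the possible dependence between $\{Y_j\}$ and $\{\varepsilon_j\}$: the degeneracy and the covariance estimates have to be obtained conditionally, working with the filtration $\mathcal{F}_j$ (recall that $\varepsilon_j$ is independent of $\mathcal{F}_{j-1}$ while $Y_j$ is $\mathcal{F}_{j-1}$-measurable).
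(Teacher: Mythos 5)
Your reduction is sound in outline and is, in spirit, the route the paper delegates to \cite{dehling:rooch:taqqu:2013}: the paper's own proof of Corollary \ref{cor:wilcoxon} is a one-liner stating that the Wilcoxon asymptotics follow from the limit \eqref{eq:limit_of_G} together with ergodicity of $\{X_j\}$, ergodicity being clear because $X_j$ is a measurable function of the i.i.d.\ vectors $(\eta_j,\varepsilon_j)$ --- a fact your write-up never invokes. Your treatment of the linear part is correct: the identity $S_k=-\int G_n(x,k/n)\,dF_{\psi(X_1)}(x)$, the continuity of $h\mapsto\int h(\cdot,t)\,dF$, and the bridge and supremum maps do produce the stated limit, and your integrability proviso is automatic since $|J_m(\Psi_x\circ\sigma)|\le\sqrt{m!}$ uniformly in $x$. (One caveat: continuity of $F_{\psi(X_1)}$, which you use twice, does not follow from \eqref{eq:integrability}; it is an implicit assumption here, as in \cite{dehling:rooch:taqqu:2013}.)

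The genuine gap is the degenerate remainder $R_n(k)$, which you correctly identify as the main obstacle but only sketch, and the sketch points at tools that do not fit. Degeneracy of $r$ under $F\otimes F$ buys nothing under dependence; the reduction principle \eqref{eq:weak-reduction-principle} is a uniform in-probability statement about the empirical process of $\{\sigma(Y_j)\}$ and is not an instrument for computing $\E\bigl(r(\xi_i,\xi_j)r(\xi_{i'},\xi_{j'})\bigr)$ --- the kernel involves $\varepsilon_i,\varepsilon_j$ as well, so it is not a subordinated Gaussian functional and has no Hermite expansion in the $Y$'s alone; and no maximal inequality is named (nor is one readily available for this dependent, non-martingale array) that would upgrade a pointwise bound $\E R_n^2(k)=o(n^2d_{n,m}^2)$ to $\sup_k|R_n(k)|=o_P(nd_{n,m})$. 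The step can be closed with the ingredients the paper already has, which is what the citation implicitly uses: write $R_n(k)=\int U_k\,dV_{k,n}$ with $U_k(x)=\sum_{i\le k}(1_{\{\xi_i\le x\}}-F(x))$ and $V_{k,n}=U_n-U_k$; the proof of Theorem \ref{thm:wilcoxon} (Lemma \ref{lem:2-param_tightness} for the martingale part, the uniform reduction of the conditional-expectation part via \eqref{eq:weak-reduction-principle} and \eqref{eq:integrability}) yields $\sup_{x,k}\bigl|U_k(x)-\tfrac{J_m(\Psi_x\circ\sigma)}{m!}\sum_{i\le k}H_m(Y_i)\bigr|=o_P(d_{n,m})$; the error term integrates against $V_{k,n}$, whose total variation in $x$ is $O(n)$, to $o_P(nd_{n,m})$, while the leading term is $O_P(d_{n,m})\cdot\sup_k\bigl|\sum_{j>k}\bigl(J_m(\Psi_{\xi_j}\circ\sigma)-\E J_m(\Psi_{\xi_1}\circ\sigma)\bigr)\bigr|=O_P(d_{n,m})\,o_P(n)$, the $o_P(n)$ coming from the ergodic theorem --- precisely where the ergodicity stressed in the paper's proof enters. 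As written, your proposal contains no complete argument for this, the only nontrivial step of the corollary.
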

\begin{proof}[Proof of Corollary \ref{cor:wilcoxon}]
According to \cite{dehling:rooch:taqqu:2013}, the asymptotic distribution of the Wilcoxon test statistic can be derived directly from the limit of the two-parameter empirical process if the sequence $\{X_j,j\geq 1\}$ is ergodic. Ergodicity is obvious, since $X_j$ can be represented as a measurable function of the i.i.d. vectors $\{(\eta_j,\varepsilon_j),j\geq 1\}$.
\end{proof}
\subsection{Proof of Theorem \ref{thm:wilcoxon}}\label{Sec:proof}
To prove Theorem \ref{thm:wilcoxon},
we consider the following decomposition:
\begin{align*}
&G_n(x, t)\\
&=\sum\limits_{j=1}^{\lfloor nt\rfloor}\left(1_{\left\{\psi(X_j)\leq x\right\}}-\E\left(1_{\left\{\psi(X_j)\leq x\right\}}\left|\right. \mathcal{F}_{j-1}\right)\right)
+\sum\limits_{j=1}^{\lfloor nt\rfloor}\left(\E\left(1_{\left\{\psi(X_j)\leq x\right\}}\left|\right. \mathcal{F}_{j-1}\right)-F_{\psi(X_1)}(x)\right)\\
&=:M_n(x, t)+R_n(x, t).
\end{align*}
It will be shown that $n^{-1/2}M_n(x, t)=\mathcal{O}_P(1)$ uniformly in $x,t$, while $d_{n, m}^{-1}R_n(x, t)$ converges in distribution to the limit process in formula
\eqref{eq:limit_of_G}. Theorem \ref{thm:wilcoxon} then follows because $\sqrt{n}=\text{o}(d_{n, m})$.

\paragraph{Martingale part.}
For fixed $x$, the following lemma characterizes the asymptotic behavior of the  martingale part $M_n(x, t)$.
We write
\begin{align*}
M_n(t):=M_n(x, t)=\sum\limits_{j=1}^{\lfloor nt\rfloor}\zeta_j(x)
\end{align*}
with $\zeta_j(x)=1_{\left\{\psi(X_j)\leq x\right\}}-\E\left(1_{\left\{\psi(X_j)\leq x\right\}}\left|\right. \mathcal{F}_{j-1}\right)$.
\begin{lemma}\label{lem:uniform_conv._in_t}
Under the conditions of Theorem \ref{thm:wilcoxon}, we have for every $x$,
\begin{align*}
\frac{1}{\sqrt{n}}M_n(t)\Rightarrow  \beta(x) B(t)\;, \ \  t\in [0,1]\;,
\end{align*}
in $\spaceD([0,1])$,
where $B$ denotes a Brownian motion process and $\beta^2(x)= \E(\zeta_1^2(x))$.
\end{lemma}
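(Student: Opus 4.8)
The plan is to observe that, for fixed $x$, the sequence $\{\zeta_j(x),\mathcal{F}_j\}_{j\ge 1}$ is a stationary, ergodic sequence of bounded martingale differences, and then to apply a functional martingale central limit theorem. That it is adapted and a martingale difference sequence follows from the construction of $\mathcal{F}_j$: since $Y_j=\sum_{k\ge 1}c_k\eta_{j-k}$ is $\mathcal{F}_{j-1}$-measurable while $\varepsilon_j$ is independent of $\mathcal{F}_{j-1}$, the variable $X_j=\sigma(Y_j)\varepsilon_j$ is $\mathcal{F}_j$-measurable, so $1_{\{\psi(X_j)\le x\}}$ is $\mathcal{F}_j$-measurable and, with $\Psi_x(y)=P(\psi(y\varepsilon_1)\le x)$, one has $\E(1_{\{\psi(X_j)\le x\}}\mid\mathcal{F}_{j-1})=\Psi_x(\sigma(Y_j))$; hence $\E(\zeta_j(x)\mid\mathcal{F}_{j-1})=0$ and $|\zeta_j(x)|\le 1$ almost surely. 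Stationarity and ergodicity of $\{\zeta_j(x)\}_{j\ge 1}$ are inherited from the i.i.d.\ vectors $\{(\eta_j,\varepsilon_j)\}$, of which $\zeta_j(x)$ is a fixed measurable functional.

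Next I would verify the two hypotheses of the martingale invariance principle. For the quadratic-characteristic condition, the independence of $\varepsilon_j$ from $\mathcal{F}_{j-1}$ and the $\mathcal{F}_{j-1}$-measurability of $Y_j$ give
\[
\E\!\left(\zeta_j^2(x)\mid\mathcal{F}_{j-1}\right)=\Psi_x(\sigma(Y_j))\bigl(1-\Psi_x(\sigma(Y_j))\bigr)=:V_j(x),
\]
and $\{V_j(x)\}_{j\ge 1}$ is stationary and ergodic with mean $\E(V_1(x))=\E(\zeta_1^2(x))=\beta^2(x)$ by the tower property. Birkhoff's ergodic theorem then yields $\frac1n\sum_{j=1}^{\lfloor nt\rfloor}V_j(x)\to t\,\beta^2(x)$ almost surely for each $t\in[0,1]$. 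The conditional Lindeberg condition is immediate: since $|\zeta_j(x)|\le 1$, for any $\epsilon>0$ and all $n>\epsilon^{-2}$ the truncated sum $\frac1n\sum_{j=1}^{n}\E\bigl(\zeta_j^2(x)\,1_{\{|\zeta_j(x)|>\epsilon\sqrt n\}}\mid\mathcal{F}_{j-1}\bigr)$ vanishes identically.

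With these two conditions in hand, the functional central limit theorem for stationary ergodic martingale differences (in the form of Billingsley's martingale invariance principle, or Hall and Heyde's martingale limit theory together with the accompanying tightness argument) gives $n^{-1/2}M_n(\cdot)\Rightarrow\beta(x)B(\cdot)$ in $\spaceD([0,1])$, which is the assertion; when $\beta(x)=0$ the limit is the zero process and the statement holds trivially.

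I do not expect any real obstacle in this lemma: the martingale differences are uniformly bounded, so there is no integrability issue --- in particular assumption \eqref{eq:integrability} plays no role here and will only be needed later, for the treatment of $R_n(x,t)$ and for uniformity in $x$. The only points that require a little care are the computation of the conditional first and second moments via the independence of $\varepsilon_j$ from $\mathcal{F}_{j-1}$, and the appeal to the ergodic theorem to identify the limiting quadratic characteristic as $t\,\beta^2(x)$.
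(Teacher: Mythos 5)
Your argument is correct and follows essentially the same route as the paper: both apply the functional martingale central limit theorem, identify the conditional variance $\E(\zeta_j^2(x)\mid\mathcal{F}_{j-1})$ as a stationary ergodic functional of $Y_j$ so that the ergodic theorem yields the limit $t\,\beta^2(x)$, and dispose of the Lindeberg condition via boundedness of the summands. The only cosmetic difference is that you check the conditional Lindeberg condition directly from $|\zeta_j(x)|\le 1$, whereas the paper first reduces to the indicators $X_{nj}(x)$ via Dvoretzky's lemma before invoking boundedness.
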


\begin{proof}
Define
$$\zeta_{nj}=n^{-\frac{1}{2}}\zeta_j(x)=X_{nj}(x) - \E(X_{nj}(x)\left|\right. \mathcal{F}_{j-1})$$
with $X_{nj}(x)=n^{-\frac{1}{2}}1_{\left\{\psi(X_j)\leq x\right\}}$.
In order to show convergence in $\spaceD([0,1])$, we apply the functional martingale central limit theorem as stated in Theorem 18.2 of \cite{billingsley:1999}.
Therefore, we have to show that
 \begin{align*}
 \sum\limits_{j=1}^{\lfloor nt\rfloor}\E\left(\zeta_{nj}^2\left|\right. \mathcal{F}_{j-1}\right)\Rightarrow \beta(x) t
 \end{align*}
 for every $t$ and that
 \begin{align*}
\lim_{n\to\infty}\sum\limits_{j=1}^{\lfloor nt\rfloor}\E\left(\zeta_{nj}^21_{\left\{|\zeta_{nj}|\geq\epsilon\right\}}\right)=0
\end{align*}
for every $t$ and $\epsilon>0$ (Lindeberg condition).
In order to show that the Lindeberg condition holds, it suffices to show that
\begin{align}\label{eq:dvoretzky}
\lim_{n\to\infty}\sum\limits_{j=1}^{\lfloor nt\rfloor}\E\left(X^2_{nj}(x)1_{\left\{|X_{nj}(x)|\geq\frac{\epsilon}{2}\right\}}\right)= 0
\end{align}
due to Lemma 3.3 in \cite{dvoretzky:1972}.
As the indicator  function is bounded, the above summands vanish for sufficiently large $n$ and hence (\ref{eq:dvoretzky}) follows.

Furthermore, the random variable $\E\left(\zeta_j^2(x)\left|\right.\mathcal{F}_{j-1}\right)$  can be considered as a measurable function of the random variable $Y_j$ and therefore as a function of $\eta_{j-1}, \eta_{j-2}, \ldots$.
As a result, $\E\left(\zeta_j^2(x)\left|\right.\mathcal{F}_{j-1}\right)$ is an ergodic sequence and it follows by the ergodic theorem that
\begin{align*}
\frac{1}{n}\sum\limits_{j=1}^{\lfloor nt\rfloor}\E\left(\zeta_j^2(x)\left|\right.\mathcal{F}_{j-1}\right)
=\frac{\lfloor nt\rfloor}{n}\frac{1}{\lfloor nt\rfloor}\sum\limits_{j=1}^{\lfloor nt\rfloor}\E\left(\zeta_j^2(x)\left|\right.\mathcal{F}_{j-1}\right)
\overset{P}{\longrightarrow}t\E(\zeta_1^2(x))
\end{align*}
for every $t$.
\end{proof}
The next lemma establishes uniform boundedness of the two-parameter process.
\begin{lemma}\label{lem:2-param_tightness}
Under the conditions of Theorem \ref{thm:wilcoxon}, we have
\begin{align*}
\frac{1}{\sqrt{n}}M_n(x, t)= \mathcal{O}_P(1)
\end{align*}
in $\spaceD(\left[-\infty,\infty\right]\times [0,1])$.
\end{lemma}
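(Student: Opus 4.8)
The plan is to exploit, exactly as in Lemma~\ref{lem:uniform_conv._in_t}, that for every fixed $x$ the process $t\mapsto M_n(x,t)=\sum_{j=1}^{\lfloor nt\rfloor}\zeta_j(x)$ is an $(\mathcal F_j)$-martingale with increments bounded by $1$, and then to upgrade this one-parameter statement to uniform-in-$x$ tightness of $n^{-1/2}M_n$ in $\spaceD([-\infty,\infty]\times[0,1])$; the asserted $\mathcal O_P(1)$ is then immediate. Note at the outset that $M_n(+\infty,t)=M_n(-\infty,t)=0$ for all $t$, so the process vanishes at both ends of $[-\infty,\infty]$ and compactifying the $x$-axis costs nothing. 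The first ingredient is an $L^2$-increment estimate: for $x\le x'$ the difference $M_n(x',t)-M_n(x,t)=\sum_{j=1}^{\lfloor nt\rfloor}\big(1_{\{x<\psi(X_j)\le x'\}}-\E(1_{\{x<\psi(X_j)\le x'\}}\mid\mathcal F_{j-1})\big)$ is again a martingale with increments bounded by $1$, so Doob's $L^2$-maximal inequality and the orthogonality of martingale differences yield
\begin{align*}
\E\Big[\sup_{0\le t\le1}\big(M_n(x',t)-M_n(x,t)\big)^2\Big]\le 4\sum_{j=1}^n\E\big(1_{\{x<\psi(X_j)\le x'\}}-\E(1_{\{x<\psi(X_j)\le x'\}}\mid\mathcal F_{j-1})\big)^2\le 4n\big(F_{\psi(X_1)}(x')-F_{\psi(X_1)}(x)\big),
\end{align*}
using $\Var(1_A\mid\mathcal F_{j-1})\le\E(1_A\mid\mathcal F_{j-1})$ and stationarity. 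Thus $n^{-1/2}M_n$ has $L^2$-increments in $x$ dominated, uniformly in $n$, by the finite measure $dF_{\psi(X_1)}$.

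This exponent-one estimate is not sufficient for $\spaceD$-tightness by itself, so the second ingredient is a product-of-squared-increments bound of Billingsley type (see \cite{billingsley:1999}): for $x_1\le x_2\le x_3$,
\begin{align*}
\E\Big[\sup_{0\le t\le1}\big(M_n(x_2,t)-M_n(x_1,t)\big)^2\big(M_n(x_3,t)-M_n(x_2,t)\big)^2\Big]\le C\,n^2\big(F_{\psi(X_1)}(x_3)-F_{\psi(X_1)}(x_1)\big)^{1+\kappa}
\end{align*}
for some $\kappa>0$. The disjointness $1_{\{x_1<\psi(X_j)\le x_2\}}1_{\{x_2<\psi(X_j)\le x_3\}}=0$ renders the genuine cross-terms of lower order, as in the classical empirical-process argument, while the leading contribution is controlled by $\E\big[\big(\sum_j\E(1_{\{x_1<\psi(X_j)\le x_2\}}\mid\mathcal F_{j-1})\big)\big(\sum_j\E(1_{\{x_2<\psi(X_j)\le x_3\}}\mid\mathcal F_{j-1})\big)\big]$; here the integrability assumption~\eqref{eq:integrability} is used, since it lets one write the conditional distribution functions $\Psi_x(\sigma(Y_j))=P(\psi(\sigma(Y_j)\varepsilon_1)\le x\mid\mathcal F_{j-1})$ through an $x$-density and thereby bound this term by a constant times $(x_3-x_1)^2$, which lifts the exponent above one. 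Together with the $L^2$-bound and the fixed-$x$ tightness in $t$ supplied by Lemma~\ref{lem:uniform_conv._in_t}, this feeds into the standard multiparameter $\spaceD$-tightness criterion and gives tightness of $n^{-1/2}M_n$ in $\spaceD([-\infty,\infty]\times[0,1])$, and in particular $n^{-1/2}M_n(x,t)=\mathcal O_P(1)$ uniformly in $(x,t)$; the at most countably many atoms of $F_{\psi(X_1)}$ contribute only admissible jumps and are handled separately.

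The step I expect to be the main obstacle is exactly this $x$-uniformity, concretely the product-moment bound. The difficulty is that the conditional means $\E(1_{\{x<\psi(X_j)\le x'\}}\mid\mathcal F_{j-1})=\Psi_{x'}(\sigma(Y_j))-\Psi_x(\sigma(Y_j))$ are functions of the long-range dependent sequence $\{Y_j\}$, so a naive grid-plus-monotonicity sandwich of $M_n(x,t)$ between its values at neighbouring grid points reintroduces precisely the long-memory oscillation that the decomposition $G_n=M_n+R_n$ was built to confine to $R_n$, and this oscillation is not negligible at the $\sqrt n$-scale; taming it is where assumption~\eqref{eq:integrability} and the martingale structure (which keeps the predictable quadratic variation comparable to its mean) have to be combined. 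Everything in the $t$-direction, by contrast, is a routine reuse of the martingale machinery already invoked for Lemma~\ref{lem:uniform_conv._in_t}.
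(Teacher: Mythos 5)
The $t$-direction part of your plan (Doob's inequality plus orthogonality of the martingale differences, giving $\E\bigl[\sup_{0\le t\le 1}(M_n(x',t)-M_n(x,t))^2\bigr]\le 4n\,(F_{\psi(X_1)}(x')-F_{\psi(X_1)}(x))$) is correct, but the step you yourself flag as the main obstacle is a genuine gap: the product-of-squared-increments bound $\E\bigl[\sup_t(\cdot)^2(\cdot)^2\bigr]\le C n^2\Delta^{1+\kappa}$, $\Delta:=F_{\psi(X_1)}(x_3)-F_{\psi(X_1)}(x_1)$, is asserted rather than proved, and the justification you offer does not work. Condition \eqref{eq:integrability} controls $\int\frac{d}{du}P(\psi(u\varepsilon_1)\le x)\,du$, i.e.\ smoothness in the volatility argument $u$, not an $x$-density of $\psi(X_1)$; nothing in the hypotheses yields $\Delta\le C(x_3-x_1)$, and a Lebesgue-type bound $C(x_3-x_1)^2$ would in any case be useless on the compactified axis $[-\infty,\infty]$, where chaining must be done against a finite measure. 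More substantively, the term you identify as the leading contribution, $\E\bigl[\bigl(\sum_j(\Psi_{x_2}-\Psi_{x_1})(\sigma(Y_j))\bigr)\bigl(\sum_j(\Psi_{x_3}-\Psi_{x_2})(\sigma(Y_j))\bigr)\bigr]$, is a covariance of partial sums of subordinated LRD functionals: the relevant Hermite coefficients of the increment functions are only bounded by $(\text{$F$-increment})^{1/2}$ (Cauchy--Schwarz, using $p^2\le p$), so this term is of order $n^2\Delta_1\Delta_2+d_{n,m}^2\,\Delta$ --- exponent \emph{one} in $\Delta$ for the long-memory part --- and a uniform $\Delta^{1+\kappa}$ bound valid down to the smallest chaining scales does not follow without a much finer multi-scale argument (note that even in the i.i.d.\ case the classical product bound carries an additive term of order $n\Delta$ and requires a separate small-increment treatment; it is not a clean $\Delta^{1+\kappa}$ estimate).

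The paper avoids needing any exponent-greater-than-one estimate at all, which is exactly the idea missing from your plan. It invokes the two-parameter tightness criterion of Theorem 3.1 in \cite{ivanoff1983}, whose conditions (besides the pointwise tightness supplied by Lemma \ref{lem:uniform_conv._in_t}) only require controlling increments over \emph{shrinking} windows $\delta_n\searrow 0$ in one coordinate at a time, evaluated at canonical stopping times. For such increments, martingale orthogonality and Cauchy--Schwarz give precisely your exponent-one bound, e.g.\ $\frac1n\sum_{j=1}^{\lfloor n\delta_n\rfloor}\bigl(F_{\tilde\psi(X_{j+\tau_n})}(y)-F_{\tilde\psi(X_{j+\tau_n})}(x)\bigr)$, but because the window shrinks the total mass available to the chaining variable is $\delta_n$ (respectively $\E\sup_x(F_n(x+\delta_n)-F_n(x))\to0$ by Glivenko--Cantelli for stationary ergodic sequences and continuity of the distribution function), and inserting a slowly vanishing factor $\delta_n^{-\gamma}$ (resp.\ $\gamma_n^{-1}$) into the chaining inequality of Theorem 12.5 of \cite{billingsley:1968} makes the modulus bounds $\delta_n^{1-\gamma}$, resp.\ $\gamma_n^{-1}\E\sup_x(F_n(x+\delta_n)-F_n(x))$, tend to zero. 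So either supply a proof of your $\Delta^{1+\kappa}$ product bound that genuinely copes with the LRD conditional means (which is exactly the oscillation you note is not negligible at the $\sqrt n$ scale), or replace the Billingsley product-moment route by a stopping-time/shrinking-window criterion of Ivanoff type; as written, your key estimate is unsubstantiated and the argument does not go through.
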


The (technical) proof of this lemma can be found in Section S1 of the supplementary material.
\paragraph{Long memory part.}
Finally, we prove weak convergence of the long memory part $R_n(x,t)$.
\begin{lemma}
Under the conditions of Theorem \ref{thm:wilcoxon},
\begin{align*}
\frac{1}{d_{n,m}}R_n(x, t)\Rightarrow \frac{J_m(\Psi_x \circ \sigma)}{m!}Z_m(t)\;
\end{align*}
in $\spaceD\left([-\infty, \infty]\times [0, 1]\right)$ .
\end{lemma}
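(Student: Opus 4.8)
The plan is to recognise $R_n$ as the partial-sum process of a subordinated Gaussian sequence indexed by $x$, to linearise it through the empirical process of $\{1_{\{\sigma(Y_j)\le u\}}\}$ by means of the integrability assumption, and then to invoke the reduction principle \eqref{eq:weak-reduction-principle} and the non-central limit theorem \eqref{eq:hermite-conv}. First I would make the conditional expectation explicit: since $\varepsilon_j$ is independent of $\mathcal{F}_{j-1}$ while $Y_j$, and hence $\sigma(Y_j)$, is $\mathcal{F}_{j-1}$-measurable, we have $\E\left(1_{\{\psi(X_j)\le x\}}\mid\mathcal{F}_{j-1}\right)=\Psi_x(\sigma(Y_j))$, and likewise $F_{\psi(X_1)}(x)=\E\left(\Psi_x(\sigma(Y_1))\right)$ by independence of $\varepsilon_1$ and $Y_1$. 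Consequently $R_n(x,t)=\sum_{j=1}^{\lfloor nt\rfloor}\left(g_x(Y_j)-\E\left(g_x(Y_1)\right)\right)$ with $g_x:=\Psi_x\circ\sigma$, i.e. $R_n$ is, for each fixed $x$, the partial-sum process of the subordinated Gaussian sequence $\{g_x(Y_j)\}$.

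Next I would use the assumption \eqref{eq:integrability}, which says that $u\mapsto\Psi_x(u)=P(\psi(u\varepsilon_1)\le x)$ is absolutely continuous with integrable derivative $\Psi_x'$. Integration by parts (and Fubini) then gives the representation
\[
g_x(Y_j)-\E\left(g_x(Y_1)\right)=-\int\Psi_x'(u)\left(1_{\{\sigma(Y_j)\le u\}}-F_{\sigma(Y_1)}(u)\right)du,
\]
so that $R_n(x,t)=-\int\Psi_x'(u)\,\mathcal{E}_n(u,t)\,du$, where $\mathcal{E}_n(u,t):=\sum_{j=1}^{\lfloor nt\rfloor}\left(1_{\{\sigma(Y_j)\le u\}}-F_{\sigma(Y_1)}(u)\right)$ is the two-parameter empirical process of $\{\sigma(Y_j)\}$. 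By the definition of $m$ in Theorem \ref{thm:wilcoxon}, the indicator class $\left\{1_{\{\sigma(Y_1)\le u\}}-F_{\sigma(Y_1)}(u)\right\}$ has Hermite rank $m$, so \eqref{eq:weak-reduction-principle} applied with $g=\sigma$ yields $\sup_{u,t}d_{n,m}^{-1}\left|\mathcal{E}_n(u,t)-J_m(\sigma;u)\sum_{j=1}^{\lfloor nt\rfloor}H_m(Y_j)\right|\overset{P}{\longrightarrow}0$, where $J_m(\sigma;u)=\E\left(1_{\{\sigma(Y_1)\le u\}}H_m(Y_1)\right)$.

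Substituting this into the integral representation gives, uniformly in $(x,t)$,
\[
\frac{1}{d_{n,m}}R_n(x,t)=\left(-\int\Psi_x'(u)\,J_m(\sigma;u)\,du\right)\frac{1}{d_{n,m}}\sum_{j=1}^{\lfloor nt\rfloor}H_m(Y_j)+r_n(x,t),
\]
with $\sup_{x,t}|r_n(x,t)|\le\left(\sup_x\int|\Psi_x'(u)|\,du\right)\cdot\sup_{u,t}d_{n,m}^{-1}\left|\mathcal{E}_n(u,t)-J_m(\sigma;u)\sum_{j=1}^{\lfloor nt\rfloor}H_m(Y_j)\right|\overset{P}{\longrightarrow}0$, the first factor being finite (this is the essential content of \eqref{eq:integrability}). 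A Fubini computation using $\E\left(H_m(Y_1)\right)=0$ identifies the bracketed coefficient, $-\int\Psi_x'(u)J_m(\sigma;u)\,du=\E\left(\Psi_x(\sigma(Y_1))H_m(Y_1)\right)=J_m(\Psi_x\circ\sigma)$; the same computation carried out with $H_q$, $q<m$, shows $J_q(\Psi_x\circ\sigma)=0$, so $m$ is a lower bound for the Hermite rank of $g_x$, and the leading coefficient vanishes exactly when that rank is strictly larger than $m$. Finally, $d_{n,m}^{-1}\sum_{j=1}^{\lfloor nt\rfloor}H_m(Y_j)\Rightarrow\frac{1}{m!}Z_m(t)$ in $\spaceD([0,1])$ by \eqref{eq:hermite-conv}; since $x\mapsto J_m(\Psi_x\circ\sigma)$ is deterministic and right-continuous in $x$ (dominated convergence) and the Hermite partial-sum process does not depend on $x$, the ``tensoring'' map $w\mapsto\left(J_m(\Psi_x\circ\sigma)\,w(t)\right)_{x,t}$ is continuous at continuous paths $w$, so the continuous mapping theorem together with Slutsky's lemma (absorbing $r_n$) yields $d_{n,m}^{-1}R_n(x,t)\Rightarrow\frac{J_m(\Psi_x\circ\sigma)}{m!}Z_m(t)$ in $\spaceD([-\infty,\infty]\times[0,1])$.

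I expect the main obstacle to be the uniformity in $x$: making rigorous the integral representation and the interchange of integration, summation and expectation under only \eqref{eq:integrability}, and then controlling $\sup_x$ of the resulting error term. Once this is done, tightness in the product Skorokhod space $\spaceD([-\infty,\infty]\times[0,1])$ is inexpensive, because the limit factorises into a deterministic function of $x$ times a process that is continuous in $t$, so that no genuine $x$-modulus-of-continuity estimate for $R_n$ itself is needed.
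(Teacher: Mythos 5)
Your proposal follows essentially the same route as the paper's proof: you rewrite $R_n$ via $\E\left(1_{\{\psi(X_j)\leq x\}}\mid\mathcal{F}_{j-1}\right)=\Psi_x(\sigma(Y_j))$ as an integral of $\frac{d}{du}\Psi_x(u)$ against the two-parameter empirical process of $\{\sigma(Y_j)\}$, kill the remainder uniformly in $(x,t)$ by the Dehling--Taqqu reduction principle \eqref{eq:weak-reduction-principle} together with the integrability condition \eqref{eq:integrability}, identify the coefficient as $J_m(\Psi_x\circ\sigma)$ (your Fubini computation is just the paper's integration by parts in disguise), and conclude with \eqref{eq:hermite-conv}. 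The only differences are cosmetic: you spell out the uniform error bound and the continuous-mapping step that the paper leaves implicit, which is fine.
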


\begin{proof}
Note that
\begin{align*}
\E\left(1_{\left\{\psi(X_j)\leq x\right\}}\left|\right. \mathcal{F}_{j-1}\right)=\E\left(1_{\left\{\psi(\sigma(Y_j)\varepsilon_j)\leq x\right\}}\left|\right. \mathcal{F}_{j-1}\right)=\Psi_x(\sigma(Y_j) )
\end{align*}
because $Y_j$ is $\mathcal{F}_{j-1}$-measurable and $\varepsilon_j$ is independent of $\mathcal{F}_{j-1}$. Furthermore,
$\E\left(\Psi_x(\sigma(Y_j) )\right)=F_{\psi(X_1)}(x)$, where $F_{\psi(X_1)}$ denotes the distribution function of $\psi(X_1)=\psi(\sigma(Y_1)\varepsilon_1)$.
Hence,
\begin{align*}
R_n(x, t)=\lfloor nt\rfloor \int \Psi_x(u)d\left(G_{\lfloor nt\rfloor}-\E G_{\lfloor nt\rfloor}\right)(u),
\end{align*}
where
$G_l(u)=\frac{1}{l}\sum_{j=1}^l1_{\left\{\sigma(Y_j)\leq u\right\}}$
is the empirical distribution function of the  sequence $\{\sigma(Y_j),j\geq 1\}$.
We have
\begin{align*}
&d_{n,m}^{-1}R_n(x, t)\\
&= -\Bigg\{\int\frac{d}{du}P\left(\psi(u\varepsilon_1)\leq x\right)d_{n,m}^{-1}\left\{\lfloor nt\rfloor\left(G_{\lfloor nt\rfloor}(u)- \E G_{\lfloor nt\rfloor}(u)\right)-\frac{J_m(\sigma;u)}{m!}\sum\limits_{j=1}^{\lfloor nt\rfloor}H_m(Y_j)\right\}du\Bigg\}\\
&\quad - \Bigg\{\int \frac{d}{du}P\left(\psi(u\varepsilon_1)\leq x\right)d_{n,m}^{-1}\frac{J_m(\sigma;u)}{m!}\sum\limits_{j=1}^{\lfloor nt\rfloor}H_m(Y_j)du\Bigg\}=: I_1(x,t)+I_2(x,t),
\end{align*}
where $m$  denotes the Hermite rank of the class   $\left\{1_{\left\{\sigma(Y_1)\leq x\right\}}-F_{\sigma(Y_1)}(x), \ x\in \mathbb{R}\right\}$ and
\begin{align*}
J_m(\sigma; y)=\E\left(1_{\left\{\sigma(Y_1)\leq y\right\}}H_m(Y_1)\right)\;.
\end{align*}
Using the reduction principle (\ref{eq:weak-reduction-principle}) with $g=\sigma$ and the integrability condition (\ref{eq:integrability}), we conclude that
the first summand converges to $0$ in probability, uniformly in $x,t$.
Furthermore,
\begin{align*}
I_2(x,t)= -d_{n,m}^{-1}\sum\limits_{j=1}^{\lfloor nt\rfloor}H_m(Y_j)\Bigg\{\int \frac{d}{du}P\left(\psi(u\varepsilon_1)\leq x\right)\frac{J_m(\sigma; u)}{m!}du\Bigg\}\;.
\end{align*}
%We have
%\begin{align*}
%d_{n,m}^{-1}\sum\limits_{j=1}^{\lfloor nt\rfloor}H_m(Y_j)\Rightarrow Z_m(t), \ t\in \left[0,1\right].
%\end{align*}
Denoting by $\varphi$ the standard normal density,
integration by parts yields
\begin{align*}
&\int \frac{d}{du}P\left(\psi(u\varepsilon_1)\leq x\right)J_m(\sigma;u)du=-J_m(\Psi_x\circ \sigma)\;.
\end{align*}
The proof is concluded by (\ref{eq:hermite-conv}).
\end{proof}

\section{Examples}\label{sec:examples}
\subsection{Change in the mean}\label{sec:change-in-mean}
To test for a change in the mean, we choose $\psi(x)=x$. 

\noindent \textbf{CUSUM:} Recall that the function $\Psi$ in Theorem \ref{thm:partial-sums} is defined as
$\Psi(y)=\E\left(\psi(\sigma(y)\varepsilon_1)\right)$.
In this case,
$\E(\psi(X_1)\left|\right. \mathcal{F}_0)=0$.
Therefore, the CUSUM statistic converges to a Brownian bridge. Hence,
long memory in LMSV does not influence the asymptotic behavior  of the CUSUM statistic when testing for a change in the mean.

\noindent \textbf{Wilcoxon:}
Recall that $\Psi_x(y)=P\left(\psi(y\varepsilon_1)\leq x\right)$.
Using integration by parts and noting that $(d/dz)\varphi(z)=-z\varphi(z)$, we conclude
\begin{align*}
&J_1(\Psi_x\circ\sigma)=\int x\frac{d}{dz}\left(\frac{1}{\sigma(z)}\right)f_{\varepsilon}\left(\frac{x}{\sigma(z)}\right)\varphi(z)dz,
\end{align*}
where $f_{\varepsilon}$ is the density of $\varepsilon_1$ (if it exists). Here, different scenarios are possible: if $\sigma(y)=y^2$, then
the integrand is antisymmetric in $z$ for  any choice of $f_{\varepsilon}$. Hence,
$J_1(\Psi_x\circ\sigma)=0$ and some calculations show that the Hermite rank of $\Psi_x\circ\sigma$ is $2$.  If $\sigma(y)=\exp(y)$ and
$\varepsilon_1$ is Pareto distributed, i.e. for some $\alpha, c>0$
$f_{\varepsilon}(x)=
\frac{\alpha c^{\alpha}}{x^{\alpha+1}}1_{\left\{x\geq c\right\}}$,
then, as a result,
\begin{align*}
J_1(\psi_x\circ \sigma)
=\alpha c^{\alpha}x^{-\alpha}\int\exp(z\alpha)1_{\left\{ \log\left(\frac{x}{c}\right)\geq z\right\}}\varphi(z)dz.
\end{align*}
Hence, $J_1(\Psi_x\circ\sigma)\not=0$. In any case,
long memory in LMSV influences the asymptotic behavior  of the Wilcoxon statistic.

\subsection{Change in the variance}\label{sec:change-in-variance}
To test for a change in the variance, we choose $\psi(x)=x^2$.  

\noindent \textbf{CUSUM:}
Recall again that the function $\Psi$ in Theorem \ref{thm:partial-sums} is defined as
$\Psi(y)=\E\left(\psi(\sigma(y)\varepsilon_1)\right)$.
Then
$\E(\psi(X_1)\left|\right. \mathcal{F}_0)\not=0$
and hence long memory affects the limiting behavior  of the CUSUM statistic.
Moreover,
 \begin{align*}
J_m(\Psi)
&=\E (\varepsilon_1^2)\int\sigma^2(z) H_m(z)\varphi(z)dz=\E \left(\varepsilon_1^2\right) J_m(\sigma^2),
\end{align*}
i.e. the Hermite rank of $\Psi$ equals the Hermite rank of $\sigma^2$. If $mD<1$, then the limiting behavior  of the CUSUM statistic is described by (\ref{eq:CUSUM:lrd}). Hence,
long memory in LMSV influences the asymptotic behavior  of the CUSUM statistic when  testing for a change in the variance.

\noindent \textbf{Wilcoxon:}
Recall again that $\Psi_x(y)=P\left(\psi(y\varepsilon_1)\leq x\right)$.
For $x\geq 0$, we have
\begin{align*}
J_1(\Psi_x\circ\sigma)
=\int\left( \sqrt{x}\frac{d}{dz}\frac{1}{\sigma(z)}\right)\left\{f_{\varepsilon}\left(\frac{\sqrt{x}}{\sigma(z)}\right)+f_{\varepsilon}\left(-\frac{\sqrt{x}}{\sigma(z)}\right)\right\}\varphi(z)dz.
\end{align*}
If $\sigma(y)=\exp(y)$, then we can conclude by the same argument as in the case of testing for  a change in the mean that $J_1(\Psi_x\circ \sigma)\not=0$.
Hence,
long memory in LMSV time series influences the asymptotic behavior  of the Wilcoxon statistic when testing for a change in the variance.

\subsection{Change in the tail index}\label{sec:examples-tail}
To test for a change in the tail index, we choose $\psi(x)=\log(x^2)$.   

\noindent \textbf{CUSUM:}
In this case,
$\E(\psi(X_1)\left|\right. \mathcal{F}_0)\not=0$
and hence long memory affects the limiting distribution of the CUSUM statistic. Moreover,
\begin{align*}
J_m(\Psi)=2\int \log(\sigma(z))H_m(z)\varphi(z)dz=2J_m(\log\circ \sigma),
\end{align*}
so that the Hermite rank of $\Psi$ equals the Hermite rank  of $h=\log\circ \sigma$.

We note further that if $\psi(x)=\log(x^2)$, we have
 \begin{align*}
  &\frac{1}{d_{n,m}}\sum\limits_{j=1}^{\lfloor nt\rfloor}\left(\log \left(X_j^2\right)-\E\log \left(X_j^2\right)\right)\\
&=\frac{2}{d_{n,m}}\sum\limits_{j=1}^{\lfloor nt\rfloor}\left(\log \sigma(Y_j)-\E \log\sigma
(Y_j)\right)+
\frac{\sqrt{n}}{d_{n,m}}\frac{1}{\sqrt{n}} \sum\limits_{j=1}^{\lfloor nt\rfloor}\left(\log \left(\varepsilon^2_j\right)-\E\log\left(\varepsilon^2_j\right) \right).
\end{align*}
 The first summand  converges
to $2(J_m(\log\circ \sigma)/m!)Z_m(t)$ as a consequence of the functional non-central limit theorem discussed in Section \ref{sec:gaussian}. The second term is $o_P(1)$ uniformly in $t$ by  Donsker's theorem, if $\Var (\log \varepsilon_1^2)<\infty$. As a result, this observation is consistent with  Corollary \ref{Cor:partial sum}.

\noindent \textbf{Wilcoxon:}
Due to monotonicity of the logarithm, choosing $\psi(x)=\log(x^2)$ yields the same   test statistic 
as if testing for a change in variance, i.e. if we choose $\psi(x)=x^2$.
Moreover, the Wilcoxon test statistic is robust, i.e. it is designed to reduce the effect of heavy tails, so that it does not seem advisable to apply Wilcoxon-like tests when testing for a change in the tail index.

\section{Self-normalization}\label{sec:self-norm}

An application of the CUSUM test presupposes knowledge of the normalizing sequence $d_{n,m}$ (if $\E(\psi(X_1)\left|\right. \mathcal{F}_0)\neq 0$) and of the coefficients $J_m(\Psi)$ or $\sigma$ that appear in the limit of  the test statistic. Usually, these quantities are unknown.
In order to avoid estimation of the normalization and the unknown coefficients in the limit, we consider the self-normalized CUSUM test statistic with respect to the observations $\xi_j=\psi(X_j)$, $j=1, \ldots, n$. For $0<\tau_1<\tau_2<1$ it is defined by
\begin{align*}
T_n(\tau_1, \tau_2)=\sup_{k\in \left\{\lfloor n\tau_1\rfloor, \ldots,  \lfloor n\tau_2\rfloor\right\}}\left|G_n(k)\right|,
\end{align*}
where
\begin{align*}
G_n(k)
=\frac{\sum_{j=1}^k\xi_j-\frac{k}{n}\sum_{j=1}^n\xi_j}{
\left\{\frac{1}{n}\sum_{t=1}^k S_t^2(1,k)+\frac{1}{n}\sum_{t=k+1}^n S_t^2(k+1,n)\right\}^{\frac{1}{2}}
}
\end{align*}
with
$S_{t}(j, k)=\sum_{h=j}^t\left(\xi_h-\bar{\xi}_{j, k}\right), \quad
\bar{\xi}_{j, k}=\frac{1}{k-j+1}\sum_{t=j}^k\xi_t$.
The self-normalized CUSUM test  rejects the hypothesis  for
large values of the test statistic $T_n(\tau_1, \tau_2)$.
Note that the proportion of the data that is included in the calculation of the supremum
 is restricted by
  $\tau_1$ and $\tau_2$. A common  choice is $\tau_1= 1-\tau_2=0.15$; see  \cite{andrews:1993}.

In order to detect changes in the mean of (possibly) long range dependent time series, a similar test statistic has been proposed in \cite{shao:2011}.
For long memory stochastic volatility sequences the limit  of the test statistic
can be derived in the same way as in \cite{shao:2011}.
Under the assumptions of Theorem \ref{thm:partial-sums} (and if  $\E(\psi(X_1)\left|\right. \mathcal{F}_0)\neq 0$ and $mD<1$),   an application of the continuous mapping theorem to the partial sum process $\frac{1}{d_{n,m}}\sum_{j=1}^{\lfloor nt\rfloor}\left(\psi(X_j)-\E\psi(X_j)\right)$ yields
$T_n(\tau_1, \tau_2)~\overset{\mathcal{D}}{\longrightarrow}~T(m,\tau_1, \tau_2)$, where
\begin{align}\label{eq:SNCUSUM:lrd}
&T(m, \tau_1, \tau_2)=\sup\limits_{t\in \left[\tau_1, \tau_2\right]}\frac{ \left|Z_m(t)-t Z_m(1)\right|}{
\Bigl\{\int_0^{t}V_m^2(r; 0, t)dr+\int_{t}^1 V_m^2(r; t, 1)dr\Bigr\}^{\frac{1}{2}}}
\intertext{with}
&V_m(r; r_1, r_2)=Z_m(r)-Z_m(r_1)-\frac{r-r_1}{r_2-r_1}\left\{Z_m(r_2)-Z_m(r_1)\right\} \notag
\end{align}
for $r\in \left[r_1, r_2\right]$, $0< r_1< r_2< 1$.

If  $\E(\psi(X_1)\left|\right. \mathcal{F}_0)= 0$,
it follows by an application of the continuous mapping theorem to the partial sum process $\frac{1}{\sqrt{n}}\sum_{j=1}^{\lfloor nt\rfloor}\psi(X_j)$  that
$T_n(\tau_1, \tau_2)~\overset{\mathcal{D}}{\longrightarrow}~T(1, \tau_1, \tau_2)$, where $T(1,\tau_1, \tau_2)$ corresponds to \eqref{eq:SNCUSUM:lrd} with $Z_1$ denoting a fractional Brownian motion process with Hurst parameter $1/2$, i.e. under these assumptions $Z_1$ denotes a Brownian motion.
Note that in this case the limit does not depend on any unknown parameters.  The  factor $\sigma$ that appeared in the limit of the partial sum process is canceled out by self-normalization.

As an alternative to the self-normalized CUSUM test, \cite{betken:2016} proposes to use a self-normalized Wilcoxon test for the detection of changes in the mean of long range dependent time series.
For the definition of the  corresponding test statistic, we consider the ranks defined by $R_i:=\rank(X_i)=\sum_{j=1}^n1_{\{X_j\leq X_i\}}$ for $i=1,\ldots,n$.  The self-normalized two-sample Wilcoxon test statistic is then defined by
\begin{equation*}
G_n(k)
:=\frac{\sum_{i=1}^kR_i-\frac{k}{n}\sum_{i=1}^nR_i}{\bigg\{\frac{1}{n}\sum_{t=1}^k S_t^2(1,k)+\frac{1}{n}\sum_{t=k+1}^n S_t^2(k+1,n)\bigg\}^{1/2}},
\end{equation*}
where
$S_{t}(j, k):=\sum_{h=j}^t\left(R_h-\bar{R}_{j, k}\right)\ \ \text{with} \ \ \bar{R}_{j, k}:=\frac{1}{k-j+1}\sum_{t=j}^kR_t$.
The self-normalized Wilcoxon change-point test rejects the hypothesis for large values of $\max_{k\in \left\{\lfloor n\tau_1\rfloor, \ldots,  \lfloor n\tau_2\rfloor\right\}}\left|G_n(k)\right|$, where $0< \tau_1 <\tau_2 <1$.

Under the hypothesis of stationarity,
 the asymptotic distribution of the test statistic is derived in \cite{betken:2016} for long range dependent subordinated Gaussian processes.
Basically, the asymptotics of the test statistic follow from an application of the continuous mapping theorem to the process
\begin{align*}
\frac{1}{nd_{n,m}}\sum\limits_{i=1}^{\lfloor nt \rfloor}\sum\limits_{j=\lfloor nt \rfloor +1}^n\left( 1_{\left\{\psi(X_i)\leq \psi(X_j)\right\}}-\frac{1}{2}\right)\;, \ \  t\in [0,1]\;.
\end{align*}
Theorem \ref{thm:wilcoxon} implies that the above process converges in distribution to the following limit:
\begin{align*}
\int_{\mathbb{R}}J_m(\Psi_x\circ \sigma)dF_{\psi(X_1)}(x)\frac{1}{m!}\left(Z_m(t)-\lambda Z_m(1)\right)\;, \ \  t\in [0,1]\;.
\end{align*}
Under the LMSV model, the limit  of the test statistic
can therefore be derived by the same argument that proves Theorem 1 in \cite{betken:2016}. As a result,  $T_n(\tau_1, \tau_2)$ converges in distribution to $T(m,\tau_1, \tau_2)$,  i.e. the self-normalized Wilcoxon test statistic and the self-normalized CUSUM test statistic converge to the same limit.

\section{Simulations}

For all simulations we make the following specifications:
\begin{align}
X_j=\sigma(Y_j)\varepsilon_j,\; \ \ j\geq 1\;,\label{eq: LMSVsimulation_normal}
\end{align}
where
\begin{itemize}
\item  $\{\varepsilon_j,j\geq 1\}$ is an i.i.d. sequence;
\item $\{Y_j,j\geq 1\}$ is a fractional Gaussian noise sequence generated by the function \verb$fgnSim$ (\verb$fArma$ package in \verb$R$) with the Hurst parameter $H$ (note that $H$ and the memory parameter $D$ are linked by $H=1-D/2$);
\item  $\sigma(y)=\exp(y)$.
\end{itemize}

In the following, we give an overview over the main  simulation results 
for the three testing situations described in Section \ref{sec:examples}.
For all scenarios, we computed the rejection rates of the different  tests under the hypothesis and different change point alternatives.

\subsection{Change in the mean}

We investigate the finite sample performance of the CUSUM and Wilcoxon change-point tests for detecting changes in the mean of LMSV time series $\{X_j,j\geq 1\}$, i.e. we choose $\psi(x)=x$ for the test statistics  described in Section \ref{sec:change-point problem} and Section \ref{sec:self-norm}.

We consider two different scenarios:
\begin{enumerate}
\item $\{\varepsilon_j,j\geq 1\}$ are standard normal  generated  by  the function \verb$rnorm$  in \verb$R$;
\item $\{\varepsilon_j,j\geq 1\}$ follow a Pareto distribution, centered to mean zero, i.e. $\varepsilon_j=\tilde{\varepsilon}_j-\E\tilde{\varepsilon}_j$, where $\{\tilde{\varepsilon}_j,j\geq 1\}$ are Pareto distributed with parameter $\alpha$ generated  by  the function \verb$rgpd$  (\verb$fExtremes$ package in \verb$R$);
\end{enumerate}

In order to compare the  finite sample behavior of the change-point tests, we computed the empirical size and the empirical power of the testing procedures.
To determine the finite sample performance under the alternative, we simulated time series with a change-point of  height $h$ after a proportion $\tau$ of the data, i.e.
we  consider  random variables $X_j$, $j=1, \ldots, n$ with expected value $\mu_j$, $j=1\ldots, n$ such that $\mu_j=0$ for $j=1, \ldots, \lfloor n\tau\rfloor$ while  $\mu_j=h$ for $j=\lfloor n\tau\rfloor+1, \ldots, n$.
 The calculations
are based on $5,000$ realizations of time series with sample sizes $500$, $1,000$ and $2,000$.
The simulation results for $\tau=0.5$ are reported in Tables \ref{table:cp_in_mean_normal_eps} and \ref{table:cp_in_mean_(SN)}; additional simulation results for $\tau=0.25$ can be found in Section S2 of  the supplementary material.
The frequency of a type 1
error, i.e. the rejection rate under the hypothesis,
corresponds to the values in the
columns that are superscribed by \enquote{h = 0}.

 Recall that the function $\Psi$ in Theorem \ref{thm:partial-sums} is defined as
$\Psi(y)=\E\left(\psi(\sigma(y)\varepsilon_1)\right)$.
Since $\psi(x)=x$, it follows that
$\E(\psi(X_1)\left|\right. \mathcal{F}_0)=0$.
Due to Theorem \ref{thm:partial-sums},
\begin{align*}
\frac{1}{\sqrt{n}}\sum\limits_{j=1}^{\lfloor nt\rfloor}\psi(X_j)\Rightarrow  \sigma B(t)\;,
\end{align*}
where $\sigma^2=\E(\psi^2(X_1))=\E(\varepsilon_1^2\exp(2Y_1))=\Var(\varepsilon_1)\E(\exp(2Y_1))$.
 Hence,
we expect long memory in the data not to influence the asymptotic behavior of the CUSUM statistic.

 If $\{\varepsilon_j, j\geq 1\}$ are standard normal,
it can be shown that
\begin{align*}
 \left|\int J_1(\Psi_x\circ \sigma)dF_{X_1}(x)\right|
 =0 \ .
\end{align*}
Therefore,   the Wilcoxon test statistic  converges to $0$ in probability.

If $\{\varepsilon_j,j\geq 1\}$ are centered two-sided Pareto distributed with parameter $\alpha$, it follows by some basic calculations that
\begin{align*}
&f_X(x)=
\frac{1}{x}\int_{\mu}^{\infty}h_{\alpha}(x, u)du1_{\left\{x>0\right\}}
-\frac{1}{x}\int_{1}^{\mu}h_{\alpha}(x, u)du1_{\left\{x<0\right\}},
\\
&J_1(\Psi_x\circ \sigma)=
-\int_{\mu}^{\infty}h_{\alpha}(x, u)du 1_{\left\{x>0\right\}}
+\int_{1}^{\mu}h_{\alpha}(x, u)du 1_{\left\{x<0\right\}},
\end{align*}
where $\mu$ is the expected value of $\tilde{\varepsilon}_1$, i.e. $\mu:=\frac{\alpha}{\alpha-1}$, and
\begin{align*}
h_{\alpha}(x, u)=\alpha u^{-\alpha-1}\varphi\left(\frac{1}{2}\left[\log\left((u-\mu)^2\right)-\log(x^2)\right]\right)1_{\left\{u>1\right\}}.
\end{align*}
As a result, we have
\begin{align*}
\int J_1(\Psi_x\circ \sigma)dF_{\psi(X_1)}(x)
=\int_0^{\infty}\frac{1}{x}\left\{\left(\int_{1}^{\mu}h_{\alpha}(x, u)du\right)^2-\left(\int_{\mu}^{\infty}h_{\alpha}(x, u)du\right)^2\right\}dx.
\end{align*}

Since
our theoretical results do not provide a non-degenerate asymptotic distribution for the Wilcoxon test statistics, we only consider CUSUM and self-normalized CUSUM test
for the first scenario (when the $\varepsilon_j$, $j\geq 1$, are standard normally distributed). 
%\newgeometry{left=15mm,right=-15mm,top=15mm,bottom=35mm}
%\begin{landscape}
\begin{table}[htbp]
\footnotesize 
 \begin{threeparttable}
\begin{tabular}{crcccccccccccc}
 & & & \multicolumn{5}{c}{\textbf{CUSUM}}  &  &  \multicolumn{5}{c}{\textbf{self-norm. CUSUM}} \\
 \cline{4-8} \cline{10-14}\\
 {$H$} & $n$   &  & $h=0$ &  &{$h = 0.5$} &{$h = 1$} &{$h = 2$} &  & $h=0$ &  &{$h = 0.5$} &{$h = 1$} &{$h = 2$} \\
 0.6 & 500 & & 0.046  &  & 0.384 & 0.958 & 1.000 &  & 0.043 &  & 0.372 & 0.853 & 0.998 \\
   & 1000 &  & 0.045 &  & 0.728 & 1.000 & 1.000 &  & 0.047 &  & 0.627 & 0.975 & 1.000 \\
   & 2000 &  & 0.043 &  & 0.958 & 1.000 & 1.000 &  &  0.044 &  & 0.856 & 0.998 & 1.000 \\
   0.7 & 500 &  & 0.064 &  & 0.392 & 0.962 & 1.000 &  & 0.044 &  & 0.395 & 0.858 & 0.995 \\
   & 1000 &  & 0.056 &  & 0.736 & 0.999 & 1.000 &  &  0.044 &  & 0.629 & 0.974 & 1.000 \\
   & 2000 &  &  0.050 &  & 0.964 & 1.000 & 1.000 &  & 0.046 &  & 0.869 & 0.997 & 1.000 \\
  0.8 & 500 &  &  0.074&  & 0.369 & 0.969 & 1.000 &  & 0.039 &  & 0.462 & 0.860 & 0.993 \\
   & 1000 &  & 0.074 &  & 0.734 & 1.000 & 1.000 &  &  0.044 &  & 0.670 & 0.963 & 0.999 \\
   & 2000 &  &  0.072 &  & 0.966 & 1.000 & 1.000 &  &  0.048 &  & 0.851 & 0.996 & 1.000 \\
  0.9 & 500 &  & 0.095 &  & 0.311 & 0.977 & 1.000 &  & 0.045 &  & 0.583 & 0.875 & 0.980 \\
   & 1000 &  &  0.101 &  & 0.739 & 0.998 & 1.000 &  &  0.045 &  & 0.736 & 0.946 & 0.996 \\
   & 2000 &  & 0.101 &  & 0.979 & 1.000 & 1.000 &  & 0.043 &  & 0.860 & 0.987 & 1.000
 \end{tabular}
 \captionsetup{font=footnotesize }
\caption{Rejection rates of the CUSUM tests for LMSV  time series  (standard normal $\varepsilon_j,$ $j\geq 1$) of length $n$
 with Hurst parameter $H$  and a shift in the mean of height $h$ after a proportion $\tau=0.5$.
The calculations are based on 5,000 simulation runs.}
\label{table:cp_in_mean_normal_eps}
\end{threeparttable}
\end{table}
%\end{landscape}
%\restoregeometry

Note that in this case the size of the non-self-normalized CUSUM change point test tends to increase when the value of the Hurst parameter increases. Aside from that, the simulation results confirm the theoretical inference that long memory does not influence the asymptotic behavior of the CUSUM statistics when testing for a change in the mean in this situation.

A comparison of the finite sample behavior of both CUSUM tests shows that:
\begin{itemize}
\item The self-normalized test has better size properties than the non-self-normalized test.
\item The empirical power of the non-self-normalized CUSUM test exceeds the empirical power of the self-normalized CUSUM test for most parameter combinations that have been considered.
\end{itemize} 
Therefore, it is not possible to definitely conclude 	that one of the tests outreaches the other.

For the second scenario (when the $\varepsilon_j$, $j\geq 1$, are centered Pareto distributed), we considered all four testing procedures: CUSUM and self-normalized CUSUM; Wilcoxon and self-normalized Wilcoxon.
In this case, it is notable that for all  four testing procedures,  heavier tails, i.e. a decrease of the tail parameter, go along with an decrease of rejections. A change in the Hurst parameter seems to have a remarkable effect on the finite sample performance of the Wilcoxon test only: An increase of dependence goes along with a decrease of rejections.

\newgeometry{left=15mm,right=-15mm,top=15mm,bottom=35mm}
\begin{landscape}
\begin{table}
\footnotesize 
 \begin{threeparttable}
\begin{tabular}{ccrcccccccccccccc}
& & & \multicolumn{7}{c}{\textbf{CUSUM}}  &  \multicolumn{7}{c}{\textbf{Wilcoxon}} \\
 \cline{5-10} \cline{12-17}\\
& & &
& &$\alpha=2.5$ & &
& $\alpha=4$& & & &
$\alpha=2.5$& & &
$\alpha=4$\\
& $H$ & $n$ & & $h=0$ & $h=0.5$ & $h=1$ & $h=0$ & $h=0.5$ & $h=1$ & & $h=0$ & $h=0.5$ & $h=1$ & $h=0$ & $h=0.5$ & $h=1$ \\
\cline{2-17}\\
 \multirow{11}{*}{ \rotatebox{90}{non-self-normalized}}
 &0.6 & 500 & & 0.035 & 0.048 & 0.213 &  0.047 & 0.866 & 1.000 &  & 0.628& 1.000 & 1.000 & 0.802 & 1.000 & 1.000 \\
& & 1000 & & 0.034 & 0.085 & 0.745 &  0.051 & 0.993 & 1.000 & & 0.578  & 1.000 & 1.000 & 0.751 & 1.000 & 1.000 \\
& & 2000 & & 0.034 & 0.288 & 0.986 & 0.048 & 1.000 & 1.000 &  & 0.524 & 1.000 & 1.000 & 0.713 & 1.000 & 1.000 \\
&0.7 & 500 & & 0.035 & 0.050 & 0.209 &  0.053 & 0.864 & 1.000 & & 0.331 & 1.000 & 1.000 &  0.475 & 1.000 & 1.000 \\
& & 1000 &  & 0.037 & 0.087 & 0.752 &  0.058& 0.994 & 1.000 & & 0.270 & 1.000 & 1.000 & 0.384 & 1.000 & 1.000 \\
& & 2000 & & 0.039  & 0.285 & 0.983 &  0.051 & 1.000 & 1.000 &  & 0.207 & 1.000 & 1.000 & 0.300 & 1.000 & 1.000 \\
&0.8 & 500 & & 0.045 & 0.055 & 0.207 & 0.073 & 0.879 & 1.000 & & 0.191 & 0.974 & 1.000 & 0.273& 1.000 & 1.000 \\
& & 1000 &  & 0.041 & 0.108 & 0.757 & 0.066 & 0.994 & 1.000 &  & 0.144 & 0.994 & 1.000 & 0.187 & 1.000 & 1.000 \\
& & 2000 &  & 0.042 & 0.280 & 0.983 &  0.066 & 1.000 & 1.000 &  & 0.108 & 1.000 & 1.000 & 0.132 & 1.000 & 1.000 \\
&0.9 & 500 &  & 0.057  & 0.069 & 0.191 & 0.080 & 0.901 & 1.000 &  & 0.188 & 0.863 & 0.984 &0.232  & 0.995 & 1.000 \\
& & 1000 &  &  0.059 & 0.111 & 0.783 & 0.090 & 0.992 & 1.000 &  & 0.139 & 0.888 & 0.990 & 0.165 & 0.996 & 1.000 \\
& & 2000 &  & 0.064 & 0.238 & 0.984 & 0.092 & 1.000 & 1.000 &  & 0.108 & 0.917 & 0.996 &  0.121 &0.998 & 1.000\\
\\ 
 \multirow{11}{*}{ \rotatebox{90}{self-normalized}} 
 & 0.6 & 500 &  & 0.046 & 0.384 & 0.801 &  0.042 & 0.904 & 0.990 &  & 0.032 & 0.994 & 1.000 & 0.030 & 1.000 & 1.000 \\
 &  & 1000 &  & 0.049 & 0.564 & 0.909 & 0.044 & 0.973 & 0.998 &  &  0.032 & 1.000 & 1.000 & 0.030 & 1.000 & 1.000 \\
 &  & 2000 &  & 0.053 & 0.744 & 0.962 & 0.026 & 0.993 & 1.000 &  & 0.034 & 1.000 & 1.000 & 0.028 & 1.000 & 1.000 \\
 & 0.7 & 500 &  &  0.051& 0.401 & 0.801 &  0.042 & 0.902 & 0.989 &  &  0.029 & 0.950 & 1.000 &  0.021 & 1.000 & 1.000 \\
 &  & 1000 &  & 0.050 & 0.565 & 0.904 &  0.046 & 0.972 & 0.998 &  &  0.032 & 0.993 & 1.000 &  0.027 & 1.000 & 1.000 \\
 &  & 2000 &  &  0.049 & 0.744 & 0.966 & 0.042 & 0.994 & 0.999 &  &  0.037 & 1.000 & 1.000 & 0.030 & 1.000 & 1.000 \\
 & 0.8 & 500 &  & 0.045 & 0.424 & 0.804 &  0.044 & 0.899 & 0.990 &  & 0.031 & 0.776 & 0.977 & 0.021 & 0.987 & 0.999 \\
 &  & 1000 &  & 0.042 & 0.589 & 0.905 &  0.040 & 0.966 & 0.997 &  & 0.033 & 0.896 & 0.995 & 0.024 & 0.998 & 1.000 \\
 &  & 2000 &  &  0.050& 0.761 & 0.959 & 0.052 & 0.994 & 0.999 &  &  0.034 & 0.963 & 0.999 & 0.023 & 1.000 & 1.000 \\
 & 0.9 & 500 &  &  0.044 & 0.527 & 0.815 &  0.041 & 0.893 & 0.982 &  & 0.031 & 0.622 & 0.890 & 0.020 & 0.912 & 0.990 \\
 &  & 1000 &  &  0.051 & 0.650 & 0.898 & 0.042 & 0.959 & 0.997 &  & 0.039 & 0.708 & 0.936 & 0.034 & 0.956 & 0.996 \\
 &  & 2000 &  &  0.041 & 0.781 & 0.954 & 0.048 & 0.989 & 0.999 &  & 0.049 & 0.779 & 0.960 &  0.039 & 0.976 & 0.998 \\
\end{tabular}
 \captionsetup{font=footnotesize }
\caption{Rejection rates of  CUSUM and Wilcoxon  tests for  LMSV time series  (Pareto distributed $\varepsilon_j,$ $j\geq 1$) of length $n$ with Hurst parameter $H$, tail index $\alpha$  and a shift in the mean of height $h$ after a proportion $\tau=0.5$. The calculations are based on 5,000 simulation runs.}
\label{table:cp_in_mean_(SN)}
\end{threeparttable}
\end{table}
\end{landscape}
\restoregeometry

A comparison of the finite sample performance of the testing procedures  shows that:
\begin{itemize}
\item In most cases the size of the CUSUM test does not deviate much from the level of significance whereas the Wilcoxon change-point test suffers from size distortions: the empirical size of the Wilcoxon test is considerably higher than the level of significance.
\item The self-normalized change-point tests have considerably better size properties than the non-self-normalized tests. The self-normalized Wilcoxon test is rather conservative while the rejection rates of the self-normalized CUSUM test are close to the level of significance.
\item
The power of the self-normalized Wilcoxon test is higher than the power of the self-normalized CUSUM test for almost every combination of parameters.
\end{itemize}

All in all, our simulation results  give rise to choosing the self-normalized Wilcoxon test over the other testing procedures when testing for a change in the mean.

\subsection{Change in the variance}

We will now investigate the finite sample performance of the CUSUM and Wilcoxon change-point tests for detecting changes in the variance of LMSV time series $\{X_j,j\geq 1\}$, i.e. we choose $\psi(x)=x^2$ for the test statistics  described in Section \ref{sec:change-point problem} and Section \ref{sec:self-norm}. 

For this purpose, we simulate observations $X_1, \ldots, X_n$ which satisfy \eqref{eq: LMSVsimulation_normal} with
$\{\varepsilon_j,j\geq 1\}$ that follow a two-sided Pareto distribution, centered to mean zero, i.e. $\varepsilon_j=\tilde{\varepsilon}_j-\E\tilde{\varepsilon}_0$, where $\{\tilde{\varepsilon}_j,j\geq 1\}$ are Pareto distributed with parameter $\alpha>4$.
Since $\Var(\varepsilon_1)=\Var(\tilde{\varepsilon}_1)=\frac{\alpha}{\alpha -2}\frac{1}{(\alpha-1)^2}$, we have
\begin{align*}
\Var(X_1)=\E(\exp(2Y_1))\Var(\varepsilon_1)=\exp(2)\frac{\alpha}{\alpha -2}\frac{1}{(\alpha-1)^2} .
\end{align*}
Since
$\E(\psi(X_1)\left|\right. \mathcal{F}_0)\neq 0$, the limits of the CUSUM statistics depend on the Hurst parameter $H$.

For the determination of the critical values of the non-self-normalized CUSUM test,  note that
\begin{align*}
J_1(\Psi)
=\E\left(\exp(2Y_1)Y_1\right)\Var\left(\varepsilon_1\right)=2\exp(2)\frac{\alpha}{\alpha -2}\left(\frac{1}{\alpha-1}\right)^2.
\end{align*}

In order to apply the Wilcoxon test, we have to compute the value of the multiplicative factor $\left|\int J_1(\Psi_x\circ \sigma)dF_{\psi(X_1)}(x)\right|$.
For notational convenience define
$\mu:=\frac{\alpha}{\alpha -1}$.
For $x>0$, it follows by some calculations that
\begin{align*}
J_1(\Psi_x\circ\sigma)
=-\int _{1}^{\infty}
\alpha
u^{-\alpha-1}
\varphi\left(\frac{1}{2}\log(\frac{1}{x}(u-\mu)^2)\right)
du.
\end{align*}
Furthermore, it follows that the density function of $\psi(X_1)$ is given by
\begin{align*}
f_{\psi(X)}(x)=\frac{1}{2x}1_{\left\{x>0\right\}}\int _{1}^{\infty}
\alpha
u^{-\alpha-1}
\varphi\left(\frac{1}{2}\log(\frac{1}{x}(u-\mu)^2)\right)
du.
\end{align*}
We conclude that
\begin{align*}
\left|\int_{\mathbb{R}}J_1(\Psi_x\circ \sigma)dF_{\psi(X)}(x)\right|
=\int_0^{\infty}\frac{1}{2x}\left[\int _{1}^{\infty}
\alpha
u^{-\alpha-1}
\varphi\left(\frac{1}{2}\log(\frac{1}{x}(u-\mu)^2)\right)
du\right]dx.
\end{align*}
Therefore, the critical values for the Wilcoxon change-point test are based on an approximation of the above expression by numerical integration.
In order to determine the finite sample performance of the testing procedures under the hypothesis, we simulate observations  $X_1, \ldots, X_n$ with variance $\omega^2$.
For the computation of the empirical power, we added a change-point of  height $h^2$ after $\tau$ percent of the data, i.e.
we multiply $X_{\lfloor n\tau\rfloor+1}, \ldots, X_n$ by $h$ such that  $\Var(X_j)=\omega^2$ for $j=1, \ldots, \lfloor n\tau\rfloor$ while $\Var(X_j)=h^2\omega^2$ for $j=\lfloor n\tau\rfloor+1, \ldots, n$.
We computed the rejection rates of the testing procedures on the basis of $5,000$ realizations of time series with sample sizes $500$, $1,000$ and $2,000$.
The simulation results for $\tau=0.5$ are reported in Table \ref{table:cp_in_var (SN)}; additional simulation results for $\tau=0.25$ can be found in Section S2 of the supplementary material.
The frequency of a type 1
error
corresponds to the values in the
columns that are superscribed by \enquote{h = 1}.

\newgeometry{left=15mm,right=-15mm,top=15mm,bottom=35mm}
\begin{landscape}
\begin{table}
\footnotesize 
 \begin{threeparttable}
\begin{tabular}{ccrcccccccccccccc}
& & & \multicolumn{7}{c}{\textbf{CUSUM}}  &  \multicolumn{7}{c}{\textbf{Wilcoxon}} \\
 \cline{5-10} \cline{12-17}\\
& & &
& &$\alpha=4.5$ & &
& $\alpha=6$& & & &
$\alpha=4.5$& & &
$\alpha=6$\\
& $H$ & $n$ & & $h=1$ & $h=0.5$ & $h=2$ & $h=1$ & $h=0.5$ & $h=2$ & & $h=1$ & $h=0.5$ & $h=2$ & $h=1$ & $h=0.5$ & $h=2$ \\
\cline{2-17}\\
 \multirow{11}{*}{ \rotatebox{90}{non-self-normalized}} 
 & 0.6 & 500 &  & 0.464 & 0.377 & 0.954 &  0.467 & 0.417 & 0.967 &  &  0.119 & 0.988 & 0.990 &  0.130 & 0.990 & 0.990 \\
 &  & 1000 &  & 0.589 & 0.565 & 0.992 &  0.596 & 0.594 & 0.996 &  &  0.112 & 1.000 & 1.000 &  0.118 & 1.000 & 1.000 \\
 &  & 2000 &  &  0.708 & 0.774 & 0.999 &  0.694 & 0.814 & 0.999 &  &  0.108 & 1.000 & 1.000 &  0.104 & 1.000 & 1.000 \\
 & 0.7 & 500 &  &  0.330 & 0.252 & 0.821 &  0.330 & 0.252 & 0.839 &  & 0.078 & 0.808 & 0.808 & 0.078 & 0.806 & 0.809 \\
 &  & 1000 &  & 0.374 & 0.313 & 0.932 & 0.404 & 0.333 & 0.946 &  &  0.066 & 0.934 & 0.937 &  0.068 & 0.936 & 0.935 \\
 &  & 2000 &  & 0.431 & 0.416 & 0.984 &  0.443 & 0.436 & 0.987 &  & 0.060 & 0.992 & 0.992 & 0.061  & 0.990 & 0.991 \\
 & 0.8 & 500 &  &  0.235 & 0.170 & 0.623 & 0.244  & 0.166 & 0.651 &  & 0.074 & 0.529 & 0.525 &  0.074 & 0.515 & 0.528 \\
 &  & 1000 &  &  0.258 & 0.196 & 0.733 & 0.256 & 0.199 & 0.757 &  &  0.067 & 0.605 & 0.615 &  0.067 & 0.624 & 0.631 \\
 &  & 2000 &  & 0.275 & 0.226 & 0.838 &  0.271 & 0.230 & 0.848 &  &  0.060 & 0.741 & 0.738 &  0.059 & 0.746 & 0.734 \\
 & 0.9 & 500 &  & 0.179 & 0.127 & 0.440 & 0.170 & 0.132 & 0.445 &  &  0.088 & 0.410 & 0.419 &  0.088 & 0.404 & 0.406 \\
 &  & 1000 &  & 0.184 & 0.137 & 0.487 &  0.177 & 0.140 & 0.506 &  &  0.078 & 0.442 & 0.440 & 0.079 & 0.452 & 0.424 \\
 &  & 2000 &  &  0.191 & 0.135 & 0.542 &  0.191 & 0.148 & 0.540 &  &  0.072& 0.466 & 0.467 & 0.065 & 0.468 & 0.473\\
 \\
 \multirow{11}{*}{ \rotatebox{90}{self-normalized}} & 0.6 & 500 &  & 0.035 & 0.145 & 0.152 & 0.038 & 0.158 & 0.164 &  & 0.040 & 0.816 & 0.831 &  0.041 & 0.823 & 0.816 \\
 &  & 1000 &  & 0.033 & 0.206 & 0.193 &  0.035 & 0.227 & 0.221 &  & 0.043 & 0.965 & 0.963 & 0.047 & 0.962 & 0.960 \\
 &  & 2000 &  &  0.033 & 0.277 & 0.270 &  0.034 & 0.324 & 0.318 &  &  0.044 & 0.998 & 0.998 & 0.042 & 0.996 & 0.996 \\
 & 0.7 & 500 &  & 0.022 & 0.090 & 0.090 & 0.024 & 0.100 & 0.093 &  & 0.046 & 0.524 & 0.530 & 0.048 & 0.534 & 0.521 \\
 &  & 1000 &  &  0.022 & 0.125 & 0.121 & 0.025   & 0.130 & 0.135 &  &  0.044 & 0.701 & 0.695 &  0.046 & 0.698 & 0.699 \\
 &  & 2000 &  & 0.020 & 0.165 & 0.168 &  0.021 & 0.196 & 0.193 &  &  0.049 & 0.859 & 0.863 &  0.050 & 0.854 & 0.863 \\
 & 0.8 & 500 &  &  0.015 & 0.045 & 0.046 &  0.019 & 0.053 & 0.054 &  & 0.038& 0.270 & 0.274 &  0.042 & 0.270 & 0.278 \\
 &  & 1000 &  &  0.015 & 0.063 & 0.064 & 0.016 & 0.062 & 0.070 &  &  0.049 & 0.357 & 0.361 & 0.045 & 0.350 & 0.361 \\
 &  & 2000 &  &  0.015 & 0.073 & 0.086 &  0.017 & 0.099 & 0.083 &  &  0.047& 0.439 & 0.443 & 0.046 & 0.454 & 0.454 \\
 & 0.9 & 500 &  &  0.019 & 0.044 & 0.037 & 0.023 & 0.047 & 0.040 &  &  0.048 & 0.200 & 0.205 &  0.049 & 0.195 & 0.198 \\
 &  & 1000 &  & 0.019 & 0.053 & 0.052 & 0.021 & 0.064 & 0.058 &  & 0.053 & 0.228 & 0.224 & 0.050 & 0.246 & 0.228 \\
 &  & 2000 &  & 0.020 & 0.066 & 0.065 & 0.023 & 0.076 & 0.074 &  & 0.056 & 0.263 & 0.248 & 0.047 & 0.248 & 0.264
\end{tabular}
 \captionsetup{font=footnotesize }
\caption{Rejection rates of the  CUSUM and  Wilcoxon  tests for  LMSV  time series   of length $n$ with Hurst parameter $H$, tail index $\alpha$  and a shift in the variance of height $h^2$ after a proportion $\tau=0.5$. The calculations are based on 5,000 simulation runs.}
\label{table:cp_in_var (SN)}
\end{threeparttable}
\end{table}
\end{landscape}
\restoregeometry

In general, the finite sample behavior of the testing procedures does not seem to be strongly influenced by the tail parameter. An increase of dependence, i.e. increasing values of $H$, are usually followed by a decrease of the rejection rates though.

We make the following observations with respect to the behavior of the different testing procedures:
\begin{itemize}
\item The CUSUM test suffers from  severe size distortions. It rejects the hypothesis too frequently for every combination of the considered parameters $\alpha$ and $H$. In  particular, the   number of rejections increases as the number of observations increases. Apparently,  unrealistically large sample sizes are required for the asymptotics to apply.
The same observation has been made in \cite{pooter:vandijk:2004},
where
the CUSUM statistic is used to test for breaks in the variance of GARCH(1, 1) time series.
\item The self-normalized CUSUM test tends to be undersized. 
\item The Wilcoxon test rejects the hypothesis too often. Yet, the rejection rates seem to converge to the level of significance.
\item The empirical size of the self-normalized Wilcoxon test is close to the level of significance.
\item The power of both Wilcoxon-based testing procedures does not depend on $\alpha$.  Contrary to the behavior of the CUSUM test, the increase and  decrease of the variance are almost equally well detected by both Wilcoxon tests.
\item The power of the  Wilcoxon test exceeds the  power of the self-normalized Wilcoxon test for every parameter combination that has been considered.
\end{itemize}

All in all, the simulation results show that both
CUSUM tests are outperformed by the  Wilcoxon-based testing procedures:  Although the CUSUM test has high size distortions, its power can only compete with the power of the Wilcoxon test  if the shift height is relatively high. Even though the self-normalized CUSUM test does  not suffer from severe  size distortions, its power cannot compete with the power of any  other test. Obviously, CUSUM-based tests are highly unreliable when testing for a change in the variance.

\subsection{Change in the tail}
We will now investigate the finite sample performance of the CUSUM  change-point tests for detecting changes in the tail parameter $\alpha$ of LMSV time series $\{X_j,j\geq 1\}$, i.e. we choose $\psi(x)=\log(|x|)$ for the test statistics described in Section \ref{sec:change-point problem} and Section \ref{sec:self-norm}.
In this case, the Hermite rank of $\Psi(y)=\E\left(\psi(\sigma(y)\varepsilon_1)\right)$ equals $m=1$ and  $J_m(\Psi)=1$ such that the limit of the CUSUM test statistic does not depend on $\alpha$.

We simulate observations $X_1, \ldots, X_n$ which satisfy \eqref{eq: LMSVsimulation_normal} for
$\{\varepsilon_j,j\geq 1\}$ that follow a (non-centered) Pareto distribution with  parameter $\alpha$.
For the computation of the empirical power, we consider LMSV time series with a change-point of  height $h$ after a proportion $\tau$  of the data, i.e.
we  consider Pareto distributed  random variables $\varepsilon_j$, $j=1, \ldots, n$ with parameters $\alpha_j$, $j=1, \ldots, n$ such that $\alpha_j=\alpha$ for $j=1, \ldots, \lfloor n\tau\rfloor$ while  $\alpha_j=\alpha+h$ for $j=\lfloor n\tau\rfloor+1, \ldots, n$.
We computed the rejection rates of the testing procedures on the basis of $5,000$ realizations of time series with sample sizes $500$, $1,000$ and $2,000$.
The simulation results for $\tau=0.5$ are reported in Table \ref{table:cp_in_tail}; additional simulation results for $\tau=0.25$ can be found in Section S2 of the supplementary material.
The frequency of a type 1
error, i.e. the rejection rate under the hypothesis,
corresponds to the values in the
columns that are superscribed by \enquote{h = 0}.

\newgeometry{left=15mm,right=-15mm,top=15mm,bottom=35mm}
\begin{landscape}
\begin{table}[htbp]
\footnotesize 
 \begin{threeparttable}
\begin{tabular}{crcccccccccccccc}
 & & \multicolumn{7}{c}{\textbf{CUSUM}}  &  \multicolumn{7}{c}{\textbf{self-norm. CUSUM}} \\
\cline{4-9} \cline{11-16}\\
 & &
&  & $\alpha=0.5$ & &
& $\alpha=1$& & & &
$\alpha=0.5$& & &
$\alpha=1$\\
 $H$ & $n$ & & $h=0$ & $h=0.25$ & $h=0.5$ & $h=0$ & $h=0.25$ & $h=0.5$ & & $h=0$ & $h=0.25$ & $h=0.5$ & $h=0$ & $h=0.25$ & $h=0.5$ \\
\cline{1-16}\\
 0.6 & 500 &  &  0.457  &0.971 & 1.000 &  0.139& 0.312 & 0.597 &  &  0.035 & 0.603 & 0.907 &  0.040 & 0.137 & 0.318 \\
   & 1000 &  & 0.419 & 0.997 & 1.000 & 0.121 & 0.446 & 0.812 &  & 0.033 & 0.843 & 0.989 & 0.045 & 0.227 & 0.517 \\
   & 2000 &  & 0.388 & 1.000 & 1.000 &  0.113 & 0.628 & 0.953 &  &  0.038 & 0.974 & 1.000 &  0.045 & 0.363 & 0.732 \\
  0.7 & 500 &  & 0.213 & 0.794 & 0.976 &  0.084 & 0.161 & 0.296 &  &  0.026 & 0.364 & 0.692 & 0.041 & 0.085 & 0.167 \\
   & 1000 &  &  0.177& 0.916 & 0.998 &  0.071 & 0.188 & 0.402 &  &  0.038 & 0.551 & 0.869 & 0.043 & 0.120 & 0.248 \\
   & 2000 &  &  0.141 & 0.982 & 1.000 & 0.071 & 0.241 & 0.531 &  &  0.040 & 0.750 & 0.969 &  0.047 & 0.157 & 0.344 \\
  0.8 & 500 &  & 0.131 & 0.509 & 0.812 &  0.064 & 0.107 & 0.164 &  &  0.028 & 0.195 & 0.414 & 0.041 & 0.057 & 0.091 \\
   & 1000 &  & 0.107 & 0.611 & 0.902 &  0.064 & 0.109 & 0.192 &  &  0.034 & 0.296 & 0.539 & 0.047 & 0.065 & 0.106 \\
   & 2000 &  & 0.087 & 0.709 & 0.957 & 0.054 & 0.123 & 0.227 &  &  0.040 & 0.383 & 0.671 & 0.046 & 0.082 & 0.135 \\
  0.9 & 500 &  &  0.087 & 0.339 & 0.611 &  0.056 & 0.070 & 0.106 &  &  0.030 & 0.135 & 0.288 &  0.047 & 0.059 & 0.083 \\
   & 1000 &  & 0.075 & 0.360 & 0.656 &  0.054 & 0.068 & 0.121 &  & 0.038 & 0.181 & 0.350 & 0.050 & 0.059 & 0.092 \\
   & 2000 &  &  0.061& 0.391 & 0.716 &  0.051 & 0.071 & 0.127 &  & 0.046 & 0.216 & 0.410 & 0.052 & 0.060 & 0.085
\end{tabular}
 \captionsetup{font=footnotesize }
\caption{Rejection rates of the CUSUM tests for  LMSV time series   of length $n$ with Hurst parameter $H$ and a change in the tail index $\alpha$ of height $h$ after a proportion $\tau=0.5$. The calculations are based on 5,000 simulation runs. }
\end{threeparttable}
\label{table:cp_in_tail}
\end{table}
\end{landscape}
\restoregeometry

The finite sample performance of the testing procedures is affected by a
change of the tail parameter in the following way: lighter tails, i.e. an increase in the value of the tail parameter, go along with a decrease in power, but  an empirical size which is closer to the level of significance.
An increase of dependence, i.e. an increase of the Hurst parameter, results in a decrease of rejection rates.

We make the following observations with respect to the behavior of the testing procedures:
\begin{itemize}
\item The non-self-normalized test suffers from size distortions in so far as the rejection rates under the hypothesis are  considerably higher than the nominal level (especially for small values of $\alpha$ and $H$). 
\item All in all, the self-normalized test tends to be slightly undersized. 
\item
The  non-self-normalized test  has considerably better power than the self-normalized test for every parameter combination that has been considered.
\end{itemize}
 Due to the heavy size distortions of the non-self-normalized CUSUM test, it  seems advisable to choose the self-normalized test over the non-self-normalized test in the case of heavy tails characterized by small values of $\alpha$. However, for relatively light tails, the discrepancy between the test performances under the alternative suggests to choose   the non-self-normalized test over the self-normalized CUSUM test.

In all three test situations that have been considered,  we observe that under the hypothesis the rejection rates of the self-normalized tests yield better approximations of the nominal level than the non-self-normalized tests. On the other hand, considering self-normalized tests instead of applying a deterministic normalization leads to a loss of power.
In  the context of change-point tests, the so-called \enquote{better size, less power} phenomenon  has also  been observed by \cite{shao:2011},  \cite{shao:zhang:2010} and \cite{betken:2016}. Moreover, it is consistent with observations made in \cite{lobato:2001} and \cite{sun:phillips:jin:2008}. For a comparison of the testing procedures,
 it is important to note that the finite sample results  are based on simulations which were executed under the assumption that the normalization  of the non-self-normalized tests and the multiplicative quantities that  appear in the limits of the corresponding test statistics are known.
In particular, normalization and limit of the non-self-normalized statistics usually depend on $H$, $m$,  the slowly-varying function $L_{\gamma}$ that characterizes the autocovariances of the Gaussian random variables $Y_j$, $j\geq 1$,  the distribution of $\varepsilon_j$, $j\geq 1$, (or at least the tail parameter $\alpha$), and the function $\sigma$.
For all practical purposes  these quantities are unknown, and for this reason   have to be estimated. In contrast, the self-normalized test statistic can be computed from the given data   while its limit depends on the parameters $m$ and $H$ only. For an adequate comparison of the testing procedures this has to be taken into consideration.

Moreover, the simulation results show that the  choice of the change-point test depends on the particular test situation that is considered. In general, an application of Wilcoxon-type tests reduces the  influence of heavy tails in data generating processes on test decisions.
In fact, we have seen that Wilcoxon-based testing procedures yield better results than CUSUM tests when testing for changes in mean and variance.

\section*{Supplementary Material}

This section contains supplementary material. In Section \ref{sec:technical-proof} the   proof of Lemma 3.6 is presented; Section  \ref{sec:simulations}
comprises the full set of simulation results, i.e. in addition to the  results presented in the main document, the simulation tables in  
Section  \ref{sec:simulations}
contain the empirical power of the testing procedures for changes located after a proportion $\tau=0.25$ of the data.
\par

\setcounter{section}{0}
\setcounter{equation}{0}
\def\theequation{S\arabic{section}.\arabic{equation}}
\def\thesection{S\arabic{section}}

\section{Proof of Lemma 3.6}\label{sec:technical-proof}

Assume that  $\{X_j,j\geq 1\}$ follows the LMSV model.
 Define
\begin{align*}
M_n(x, t):=\sum\limits_{j=1}^{\lfloor nt\rfloor}\left(1_{\left\{\psi(X_j)\leq x\right\}}-\E\left(1_{\left\{\psi(X_j)\leq x\right\}}\left|\right. \mathcal{F}_{j-1}\right)\right).
\end{align*}
Lemma 3.6 states that
\begin{align*}
\frac{1}{\sqrt{n}}M_n(x, t)= \mathcal{O}_P(1)
\end{align*}
in $\spaceD(\left[-\infty,\infty\right]\times [0,1])$

\begin{proof}[Proof of Lemma 3.6]
In order to prove Lemma 3.6 we have to verify tightness in $\spaceD(\left[-\infty,\infty\right]\times [0,1])$.
For this we quote Theorem 3.1 in \cite{ivanoff1983}.
\begin{theorem}\label{thm:Ivanoff}
Let $\left(X_n, \mathcal{F}_n, P_n\right)$ be a sequence of probability spaces such that $X_n$ is a random element of  $\spaceD\left([0, 1]\times \left[0, 1\right]\right)$ for each $n$, and the process $X_n(x, t)$ is adapted to a complete, right continuous filtration $\mathcal{F}_n(x, t)\subseteq \mathcal{F}_n$.
 If $X_n(x, t)$ is tight  for each $(x, t)\in [0, 1]\times \left[0, 1\right]$ and if for all $\delta_n\searrow 0$ and each  canonical 1-stopping time $S_n$ and each canonical 2-stopping time $T_n$
\begin{align}
\sup\limits_{0\leq t\leq 1}\left|X_n(S_n +\delta_n, t)-X_n(S_n, t)\right|\overset{P}{\longrightarrow}0, \ \text{as} \ n\rightarrow \infty, \label{eq:Ivanoff_1}\\
\sup\limits_{0\leq x\leq 1}\left|X_n(x, T_n +\delta_n)-X_n(x, T_n)\right|\overset{P}{\longrightarrow}0, \ \text{as} \ n\rightarrow \infty, \label{eq:Ivanoff_2}
\end{align}
then $X_n$ is tight in  $\spaceD\left(\left[0, 1\right]\times \left[0, 1\right]\right)$.
\end{theorem}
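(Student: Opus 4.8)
The plan is to establish this as the two-parameter analogue of Aldous's stopping-time criterion for tightness, reducing the genuinely two-dimensional problem to two one-dimensional Aldous arguments, one in each coordinate direction. The backbone is the characterization of relative compactness in $\spaceD([0,1]\times[0,1])$ due to Bickel and Wichura: a subset is relatively compact precisely when it is uniformly bounded and a suitable two-parameter Skorokhod modulus of continuity $w''_\delta$ vanishes uniformly over the set as $\delta\searrow 0$. Consequently, tightness of $\{X_n\}$ follows once I show that for every $\eta,\epsilon>0$ there exist $\delta>0$ and $n_0$ with $P\bigl(w''_\delta(X_n)\geq\eta\bigr)\leq\epsilon$ for all $n\geq n_0$, together with a stochastic boundedness bound $\sup_n P\bigl(\sup_{x,t}|X_n(x,t)|\geq K\bigr)\to 0$ as $K\to\infty$; the latter is immediate from the assumed pointwise tightness at the corner $(1,1)$ once the modulus control is in hand.

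First I would record the structural lemma that decouples the two-parameter modulus. The oscillation of an element of $\spaceD([0,1]\times[0,1])$ over a grid of rectangles is dominated by its oscillation along coordinate lines in the first direction (taken uniformly over the second coordinate) and along coordinate lines in the second direction (uniformly over the first), together with the mixed second-difference increments over rectangles. Writing $w^{(1)}_\delta(X_n)$ for the supremum over $t$ of the $x$-oscillation of $X_n(\cdot,t)$, and $w^{(2)}_\delta$ symmetrically, the goal becomes to bound $P\bigl(w^{(1)}_\delta(X_n)\geq\eta\bigr)$ and $P\bigl(w^{(2)}_\delta(X_n)\geq\eta\bigr)$. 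The mixed term is then absorbed because a rectangular increment telescopes, $X_n(x',t')-X_n(x,t')-X_n(x',t)+X_n(x,t)=[X_n(x',t')-X_n(x,t')]-[X_n(x',t)-X_n(x,t)]$, into two first-direction increments, each dominated by $w^{(1)}_\delta$.

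Next I would run the Aldous argument in the first direction to convert hypothesis \eqref{eq:Ivanoff_1} into the bound on $w^{(1)}_\delta$. Fixing the threshold $\eta$, define recursively the canonical $1$-stopping times $S^{(0)}_n=0$ and $S^{(k+1)}_n=\inf\{x>S^{(k)}_n : \sup_t|X_n(x,t)-X_n(S^{(k)}_n,t)|\geq\eta\}$; right-continuity of the filtration $\mathcal{F}_n(x,t)$ guarantees these are genuine stopping times in the first coordinate. If $w^{(1)}_\delta(X_n)$ were large with non-negligible probability, then with non-negligible probability two consecutive such times would satisfy $S^{(k+1)}_n-S^{(k)}_n<\delta$ while the corresponding increment exceeds $\eta$; selecting $S_n=S^{(k)}_n$ on that event and letting $\delta_n\searrow 0$ appropriately produces a sequence of $1$-stopping times that violates \eqref{eq:Ivanoff_1}. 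The identical scheme with canonical $2$-stopping times and \eqref{eq:Ivanoff_2} controls $w^{(2)}_\delta$, and pointwise tightness supplies both the uniform boundedness and the bookkeeping that bounds the number of the $S^{(k)}_n$ lying in $[0,1]$.

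The main obstacle will be the genuinely two-parameter part, namely confirming that the directional oscillations together dominate $w''_\delta$ and controlling the mixed increments. Unlike the one-parameter case, a two-parameter c\`adl\`ag function can fail to be relatively compact even when both coordinate sections are individually well behaved, so the decoupling lemma must be proved with care rather than taken for granted. Equally delicate is verifying that the constructed times are \emph{canonical} stopping times in the precise sense required here, that is, measurable with respect to the correct sub-$\sigma$-field of the lattice-indexed filtration $\mathcal{F}_n(x,t)$, uniformly in the free coordinate. Reconciling the supremum over the free coordinate with this measurability — so that $\sup_t|X_n(S_n+\delta_n,t)-X_n(S_n,t)|$ is exactly the quantity appearing in \eqref{eq:Ivanoff_1} — is the technical point on which the entire reduction hinges.
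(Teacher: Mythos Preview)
The paper does not prove this theorem at all: it is introduced with ``For this we quote Theorem 3.1 in \cite{ivanoff1983}'' and is used as a black box in the proof of Lemma~3.6. There is therefore no proof in the paper to compare your proposal against.

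That said, your sketch is broadly the right shape for how Ivanoff's result is established. The one-parameter Aldous criterion is indeed the model, and the two conditions \eqref{eq:Ivanoff_1}--\eqref{eq:Ivanoff_2} are designed to play the role of Aldous's single stopping-time condition in each coordinate direction. Your identification of the Bickel--Wichura modulus as the target and of the recursive stopping-time construction as the mechanism is correct in spirit. You are also right to flag the decoupling step as the delicate point: the two-parameter Skorokhod modulus $w''_\delta$ does not in general split cleanly into a sum of one-directional moduli, and Ivanoff's actual argument handles this by working directly with a grid-based modulus adapted to the lattice filtration rather than by the telescoping you propose. Your telescoping of the rectangular increment into two first-direction increments is fine as algebra, but it does not by itself bound the Bickel--Wichura modulus, which involves a minimum over adjacent blocks rather than a single increment. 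If you were to carry this through, you would need to engage more carefully with the specific form of $w''_\delta$ in the two-parameter setting; the one-parameter intuition that ``oscillation over a partition controls the modulus'' requires a genuine two-parameter analogue, and that is where the real work in Ivanoff's paper lies.
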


In the following we will show that the conditions of  the theorem hold for  $X_n(x, t):=n^{-1/2}M_n\circ H$, where $H:\left[0, 1\right]\times
\left[0, 1\right]\longrightarrow \left[-\infty, \infty\right]\times
\left[0, 1\right]$ with $H(x,t)=(h(x), t)$ for some increasing isomorphism $h:\left[0, 1\right]\longrightarrow \left[-\infty, \infty\right]$.
Then,  Lemma 3.6 immediately follows from an application of Theorem \ref{thm:Ivanoff}. Indeed,
due to Lemma 3.5,  $n^{-1/2}M_n\circ H(x, t)$ is tight for each  $(x, t)\in \left[0, 1\right]\times \left[0, 1\right]$.

Recall that $\
\mathcal{F}_{j}=\sigma\left(\varepsilon_{j}, \varepsilon_{j-1}, \ldots, \eta_{j}, \eta_{j-1}, \ldots\right)$.
Define 
\begin{align*}
\mathcal{F}_n(x, t):=\mathcal{F}_{\lfloor nt\rfloor}
\end{align*}
for all $x\in [0, 1]$. Then,
 $X_n(x, t)$ is adapted to $\mathcal{F}_n(x, t)$.
Moreover, 
the corresponding   filtration is right-continuous.

Let $T_{n}$ denote a canonical $2$-stopping time  for $X_n(x, t)$. In particular, we require $\left\{T_n\leq t\right\}$ to be measurable with respect to $\mathcal{F}_n(1, t)=\mathcal{F}_{\lfloor nt\rfloor}$. Define $\tau_n:=\lfloor n T_n\rfloor$.

Note that
\begin{align*}
&\left|\frac{1}{\sqrt{n}}M_n(y, T_n +\delta_n)-\frac{1}{\sqrt{n}}M_n(y, T_n)\right|
=\left|\frac{1}{\sqrt{n}}\sum\limits_{j=\tau_n+1}^{\tau_n+\lfloor n\delta_n\rfloor}\zeta_j(y)\right|
\end{align*}
with 
\begin{align*}
\zeta_j(y)=1_{\left\{\psi(X_j)\leq y\right\}}-\E\left(1_{\left\{\psi(X_j)\leq y\right\}}\left|\right. \mathcal{F}_{j-1}\right).
\end{align*}
For $0\leq x\leq 1$  define 
\begin{align*}
\tilde{M}_n(x):=\frac{1}{\sqrt{n}}\sum\limits_{j=\tau_n+1}^{\tau_n+\lfloor n\delta_n\rfloor}\zeta_{j}(h(x)).
\end{align*}

In order  to show (\ref{eq:Ivanoff_2}), we have to prove that
\begin{align}\label{eq:toshow}
\sup\limits_{0\leq x\leq 1}\left|\tilde{M}_n(x)\right|\overset{P}{\longrightarrow}0, \ \text{as} \ n\rightarrow \infty.
\end{align}

Prior to the proof,  we establish the following result:
\begin{lemma}\label{lem: E_Var_2}
We have   
$\E\left[\tilde{M}_n(x)\right]=0$ for all $x\in \mathbb{R}$. For $x<y$
\begin{align*}
\Var\left(\tilde{M}_n(y)-\tilde{M}_n(x)\right)
&=\E\left[\left(\tilde{M}_n(y)-\tilde{M}_n(x)\right)^2
\right]\\
&\leq\frac{1}{n}\sum\limits_{j=1}^{\lfloor n\delta_n\rfloor}\left[
F_{\tilde{\psi}(X_{j+\tau_n})}(y)-F_{\tilde{\psi}(X_{j+\tau_n})}(x)
\right],
\end{align*}
where $F_{\tilde{\psi}(X_{j+\tau_n})}(x):=P\left(\tilde{\psi}(X_{j+\tau_n})\leq x\right)$ and $\tilde{\psi}:=h^{-1}\circ\psi$.
\end{lemma}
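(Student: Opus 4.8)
The plan is to recognise $\tilde M_n(\cdot)$ as a \emph{martingale transform} of the martingale difference sequence $\{\zeta_j(\cdot),\mathcal F_j\}$ and to read off both assertions from that structure. Writing $\tilde M_n(x)=n^{-1/2}\sum_{j\ge 1}w_{n,j}\,\zeta_j(h(x))$ with weights $w_{n,j}:=1_{\{\tau_n+1\le j\le\tau_n+\lfloor n\delta_n\rfloor\}}$, the first thing I would establish is that each $w_{n,j}$ is $\mathcal F_{j-1}$-measurable. Since $\tau_n=\lfloor nT_n\rfloor$, one has $\{\tau_n\le k\}=\{T_n<(k+1)/n\}$, and because $T_n$ is a canonical $2$-stopping time $\{T_n\le t\}\in\mathcal F_n(1,t)=\mathcal F_{\lfloor nt\rfloor}$; writing $\{T_n<(k+1)/n\}=\bigcup_{m}\{T_n\le (k+1)/n-1/m\}$ and using right-continuity of the filtration shows $\{\tau_n\le k\}\in\mathcal F_k$. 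As $w_{n,j}=1_{\{\tau_n\le j-1\}}-1_{\{\tau_n\le j-\lfloor n\delta_n\rfloor-1\}}$, it is $\mathcal F_{j-1}$-measurable, nonnegative and bounded. This bookkeeping with the stopping time and the right-continuity of $\{\mathcal F_n(x,t)\}$ is the only delicate point; everything that follows is routine.

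Granting this, $w_{n,j}\zeta_j(h(x))$ is again a martingale difference with respect to $\{\mathcal F_j\}$, because $\E(\zeta_j(h(x))\mid\mathcal F_{j-1})=0$ and $w_{n,j}$ is predictable; taking expectations term by term gives $\E[\tilde M_n(x)]=0$ for every $x$, and hence $\Var(\tilde M_n(y)-\tilde M_n(x))=\E[(\tilde M_n(y)-\tilde M_n(x))^2]$. Next I would fix $x<y$ and set $A_j:=1_{\{h(x)<\psi(X_j)\le h(y)\}}$, which is an indicator since $h$ is increasing, so that $\zeta_j(h(y))-\zeta_j(h(x))=A_j-\E(A_j\mid\mathcal F_{j-1})=:\xi_j$ and $\tilde M_n(y)-\tilde M_n(x)=n^{-1/2}\sum_j w_{n,j}\xi_j$. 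Orthogonality of the increments of a martingale transform annihilates all cross terms in the expectation of the square, leaving
\[
\E\big[(\tilde M_n(y)-\tilde M_n(x))^2\big]=\frac1n\sum_{j\ge1}\E\big[w_{n,j}\,\xi_j^2\big].
\]
Conditioning each summand on $\mathcal F_{j-1}$ and using $\E(\xi_j^2\mid\mathcal F_{j-1})=\Var(A_j\mid\mathcal F_{j-1})\le\E(A_j^2\mid\mathcal F_{j-1})=\E(A_j\mid\mathcal F_{j-1})$, valid because $A_j$ is $\{0,1\}$-valued, together with $w_{n,j}\ge 0$ and the tower property yields $\E[w_{n,j}\xi_j^2]\le\E[w_{n,j}A_j]$, and therefore $\E[(\tilde M_n(y)-\tilde M_n(x))^2]\le\frac1n\sum_{j}\E[w_{n,j}A_j]$.

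It then remains to re-index. For every $\omega$ exactly $\lfloor n\delta_n\rfloor$ of the weights $w_{n,j}(\omega)$ are nonzero, namely those with $j=\tau_n+1,\dots,\tau_n+\lfloor n\delta_n\rfloor$, so substituting $j=i+\tau_n$ and observing that $h^{-1}$ increasing turns $\{h(x)<\psi(X_{i+\tau_n})\le h(y)\}$ into $\{x<\tilde\psi(X_{i+\tau_n})\le y\}$ with $\tilde\psi=h^{-1}\circ\psi$, the bound becomes
\[
\frac1n\sum_{i=1}^{\lfloor n\delta_n\rfloor}P\big(x<\tilde\psi(X_{i+\tau_n})\le y\big)=\frac1n\sum_{i=1}^{\lfloor n\delta_n\rfloor}\Big[F_{\tilde\psi(X_{i+\tau_n})}(y)-F_{\tilde\psi(X_{i+\tau_n})}(x)\Big],
\]
which is exactly the claimed inequality. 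The main obstacle, as indicated, is the measurability verification in the first step; once the weights are known to be predictable, the lemma is a short consequence of the martingale-transform identity and the elementary fact that for an indicator variable the conditional variance is dominated by the conditional mean.
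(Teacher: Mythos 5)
Your proof is correct, and it reorganizes the argument in a way that differs from the paper's write-up. The paper handles the random index by partitioning over the values of the stopping time: it writes the sum as $\sum_{k=1}^{n}\sum_{j=1}^{\lfloor n\delta_n\rfloor}\E\left(\alpha_{j+k}(y,x)1_{\left\{\tau_n=k\right\}}\right)$, uses that $\{\tau_n=k\}$ is $\mathcal F_{j+k-1}$-measurable, and then kills the mean and all cross terms by an explicit term-by-term expansion of $\alpha_{i+k}(y,x)\alpha_{j+k}(y,x)1_{\{\tau_n=k\}}$ before bounding the diagonal terms by the conditional mean of the indicator. You instead absorb the stopping time into predictable $\{0,1\}$-weights $w_{n,j}$ and read everything off the martingale-transform structure: predictability of $w_{n,j}$ (which you verify from $\{\tau_n\le k\}\in\mathcal F_k$, the same measurability fact the paper invokes in the form $\{\tau_n=k\}\in\mathcal F_j$ for $j\ge k$) gives the zero mean, orthogonality of the weighted differences removes the cross terms in one stroke, and the bound $\E(\xi_j^2\mid\mathcal F_{j-1})\le\E(A_j\mid\mathcal F_{j-1})$ plus pathwise re-indexing yields exactly the paper's right-hand side. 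What your packaging buys is brevity and transparency — the laborious four-line expansions in the paper's proof become the one-line statement that a martingale transform with bounded predictable weights has uncorrelated increments — while the paper's version is more self-contained in that it verifies the orthogonality by hand on the partition $\{\tau_n=k\}$ rather than appealing to the transform property. Your appeal to right-continuity of the filtration is not even needed: each event $\{T_n\le (k+1)/n-1/m\}$ already lies in $\mathcal F_{\lfloor k+1-n/m\rfloor}\subseteq\mathcal F_k$, so the union is in $\mathcal F_k$ directly; but this is a harmless redundancy, not a gap.
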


\begin{proof}
Define
\begin{align}
\alpha_j(y, x):=1_{\left\{h(x)<\psi(X_{j})\leq h(y)\right\}}-\E\left(1_{\left\{h(x)<\psi(X_{j})\leq h(y)\right\}}\left|\right. \mathcal{F}_{j-1}\right). \label{eq:alpha}
\end{align}
Then
\begin{align*}
\tilde{M}_n(y)-\tilde{M}_n(x)=\frac{1}{\sqrt{n}}\sum_{j=\tau_n+1}^{\tau_n+\lfloor n \delta_n\rfloor}\alpha_{j}(y,x).
\end{align*}
Since $\left\{\tau_n=k\right\}$ is measurable with respect to $\mathcal{F}_j$ for all $j\geq k$, it follows that 
\begin{align*}
&\E\left[\sum_{j=\tau_n+1}^{\tau_n+\lfloor n \delta_n\rfloor}\alpha_{j}(y, x)
\right]=\sum\limits_{k=1}^{n}\sum\limits_{j=1}^{\lfloor n\delta_n\rfloor}\E\left(\alpha_{j+k}(y, x)1_{\left\{\tau_n=k\right\}}\right)\\
&=\sum\limits_{k=1}^{n}\sum\limits_{j=1}^{\lfloor n\delta_n\rfloor}\E\left(
1_{\left\{x<\tilde{\psi}(X_{j+k})\leq y\right\}}1_{\left\{\tau_n=k\right\}}-\E\left(1_{\left\{x<\tilde{\psi}(X_{j+k})\leq y\right\}}1_{\left\{\tau_n=k\right\}}\left|\right. \mathcal{F}_{j+k-1}\right)
\right)\\
&=0, 
\end{align*}
where $\tilde{\psi}:=h^{-1}\circ \psi$.
Furthermore,
\begin{align*}
\E\left[\left(\sum_{j=\tau_n+1}^{\tau_n+\lfloor n\delta_n\rfloor}\alpha_{j}(y, x)\right)^2
\right]=\sum\limits_{k=1}^{n}\sum\limits_{i=1}^{\lfloor n\delta_n\rfloor}\sum\limits_{j=1}^{\lfloor n\delta_n\rfloor}\E\left(\alpha_{i+k}(y, x)\alpha_{j+k}(y, x)1_{\left\{\tau_n=k\right\}}\right).
\end{align*}
For $i<j$
\begin{align*}
&\alpha_{i+k}(y, x)\alpha_{j+k}(y, x)1_{\left\{\tau_n=k\right\}}\\
&=1_{\left\{x<\tilde{\psi}(X_{i+k})\leq  y\right\}}1_{\left\{x<\tilde{\psi}(X_{j+k})\leq  y\right\}}1_{\left\{\tau_n=k\right\}}\\
&\quad+\E\left(1_{\left\{x<\tilde{\psi}(X_{j+k})\leq  y\right\}}1_{\left\{\tau_n=k\right\}}\E\left(1_{\left\{x<\tilde{\psi}(X_{i+k})\leq  y\right\}}\left|\right. \mathcal{F}_{{i+k}-1}\right)\left|\right. \mathcal{F}_{j+k-1}\right)\\
&\quad-\E\left(1_{\left\{x<\tilde{\psi}(X_{i+k})\leq  y\right\}}1_{\left\{x<\tilde{\psi}(X_{j+k})\leq  y\right\}}1_{\left\{\tau_n=k\right\}}\left|\right. \mathcal{F}_{j+k-1}\right)\\
&\quad -1_{\left\{x<\tilde{\psi}(X_{j+k})\leq  y\right\}}1_{\left\{\tau_n=k\right\}}\E\left(1_{\left\{x<\tilde{\psi}(X_{i+k})\leq  y\right\}}\left|\right. \mathcal{F}_{i+k-1}\right).
\end{align*}
As a result,
\begin{align*}
\E\left(\alpha_{i+k}(y, x)\alpha_{j+k}(y, x)1_{\left\{\tau_n=k\right\}}\right)=0.
\end{align*}
Moreover, we have

\begin{align*}
&\sum\limits_{k=1}^{n}\E\left(\alpha^2_{i+k}( y, x)1_{\left\{\tau_n=k\right\}}\right)\\
&=\sum\limits_{k=1}^{n}\E\left[\left(1_{\left\{x<\tilde{\psi}(X_{i+k})\leq y\right\}}-\E\left(1_{\left\{x<\tilde{\psi}(X_{i+k})\leq y\right\}}\left|\right. \mathcal{F}_{i+k-1}\right)\right)^21_{\left\{\tau_n=k\right\}}\right]\\
&=\sum\limits_{k=1}^{n}\E\biggl[\left(1_{\left\{x<\tilde{\psi}(X_{i+k})\leq y\right\}}1_{\left\{\tau_n=k\right\}}-2 1_{\left\{x<\tilde{\psi}(X_{i+k})\leq y\right\}}\E\left(1_{\left\{x<\tilde{\psi}(X_{i+k})\leq y\right\}}1_{\left\{\tau_n=k\right\}}\left|\right. \mathcal{F}_{i+k-1}\right)\right)\\
&\quad  +\E\left(1_{\left\{x<\tilde{\psi}(X_{i+k})\leq y\right\}}\E\left(1_{\left\{x<\tilde{\psi}(X_{i+k})\leq y\right\}}1_{\left\{\tau_n=k\right\}}\left|\right. \mathcal{F}_{i+k-1}\right)\left|\right. \mathcal{F}_{i+k-1}\right)\biggr]\\
&=\sum\limits_{k=1}^{n}\left\{\E\left(1_{\left\{x<\tilde{\psi}(X_{i+k})\leq y\right\}}1_{\left\{\tau_n=k\right\}}\right)- \E\left(1_{\left\{x<\tilde{\psi}(X_{i+k})\leq y\right\}}\E\left(1_{\left\{x<\tilde{\psi}(X_{i+k})\leq y\right\}}1_{\left\{\tau_n=k\right\}}\left|\right. \mathcal{F}_{i+k-1}\right)\right)\right\}\\
&\leq\sum\limits_{k=1}^{n}\E\left[1_{\left\{x<\tilde{\psi}(X_{i+k})\leq y\right\}}1_{\left\{\tau_n=k\right\}}\right]
=\sum\limits_{k=1}^{n}\E\left[1_{\left\{x<\tilde{\psi}(X_{i+\tau_n})\leq y\right\}}1_{\left\{\tau_n=k\right\}}\right]\notag\\
&=\E\left[1_{\left\{x<\tilde{\psi}(X_{i+\tau_n})\leq y\right\}}\right]=F_{\tilde{\psi}(X_{i+\tau_n})}(y)-F_{\tilde{\psi}(X_{i+\tau_n})}(x).
\end{align*}
It follows that
\begin{align*}
&\Var\left(\sum_{j=\tau_n+1}^{\tau_n+\lfloor n \delta_n\rfloor}\alpha_j(y, x)\right)
=\E\left[\left(\sum_{j=\tau_n+1}^{\tau_n+\lfloor n \delta_n\rfloor}\alpha_{j}(y, x)\right)^2
\right]\leq\sum\limits_{j=1}^{\lfloor n\delta_n\rfloor}\left[
F_{\tilde{\psi}(X_{j+\tau_n})}(y)-F_{\tilde{\psi}(X_{j+\tau_n})}(x)
\right].
\end{align*}
\end{proof}

Due to Lemma \ref{lem: E_Var_2}
$\E (\tilde{M}_n(x))=0$
and
\begin{align*}
\Var (\tilde{M}_n(x))\leq \frac{1}{n}\sum\limits_{i=1}^{\lfloor n\delta_n\rfloor} F_{\tilde{\psi}(X_{i+\tau_n})}(x) \leq \delta_n\longrightarrow 0.
\end{align*}
Hence,
$\tilde{M}_n(x)$  converges to $0$ in probability. This implies fidi-convergence of $\tilde{M}_n$ as a process with values in $\spaceD\left[0, 1\right]$.

\paragraph{Proving \eqref{eq:Ivanoff_2}.}
In order to establish \eqref{eq:Ivanoff_2}, it remains to show tightness of $\tilde{M}_n$.
For this, we adopt the argument that  proves Theorem 15.6 in \cite{billingsley:1968}.
For any function $v$ in $\spaceD[0, 1]$ define the modulus $\omega^{''}_{v}(\delta)$ by

\begin{align*}
\omega^{''}_{v}(\delta)=\sup\min\left\{|v(x)-v(x_1)|, |v(x_2)-v(x)|\right\},
\end{align*}
where the supremum extends over $x_1$, $x$,  and $x_2$ satisfying
\begin{align*}
x_1\leq x\leq x_2, \ x_2-x_1\leq \delta.
\end{align*}
Under the assumption of fidi-convergence  it suffices to show that for each positive $\varepsilon$ and $\eta$, there exists a $\delta$, $0<\delta <1$, and an integer $n_0$ such that
 \begin{align*}
 P\left(\omega^{''}_{\tilde{M}_n}(\delta)\geq \varepsilon\right)\leq \eta, \ n\geq n_0,
 \end{align*}
(see Theorem 15.4 in \cite{billingsley:1968}).

Define
\begin{align*}
M_m^{''}:=\max\limits_{0\leq i\leq j\leq k\leq m}\min\left\{|S_j-S_i|, |S_k-S_j|\right\},
\end{align*}
where $S_i=\tilde{M}_n(\tau+\frac{i}{m}\delta)$.
Following the proof of  Theorem 12.5 in \cite{billingsley:1968}, there exists an $n_0\in \mathbb{N}$ such that
\begin{align}\label{inequ:M}
P\left(M_m^{''}\geq \lambda\right)\leq \frac{K}{\lambda^{2}}(u_1+\ldots +u_m)
\end{align}
holds for all positive $\lambda$, some constant $K$ and $n\geq n_0$,  if
\begin{align*}
P(\left\{|S_j-S_i|\geq \lambda, |S_k-S_j|\geq \lambda\right\})\leq \frac{\varepsilon_n}{\lambda^{2}}\sum_{i<l\leq k}u_l, \ 0\leq i\leq j\leq k\leq m,
\end{align*}
for some sequence $\varepsilon_n$, $n\in \mathbb{N}$, converging to $0$.
For $x_1\leq x\leq x_2$  it follows
by the Cauchy - Schwarz inequality for expected values and Lemma \ref{lem: E_Var_2} that

\begin{align}
&\E\left[\left|\tilde{M}_n(x_2)-\tilde{M}_n(x)\right|\left|\tilde{M}_n(x)-\tilde{M}_n(x_1)\right|\right]  \notag\\
&=\E\left[\frac{1}{\sqrt{n}}\left|\sum\limits_{j=\tau_n+1}^{\tau_n+\lfloor n\delta_n\rfloor}\alpha_j(x_2, x)\right|
\frac{1}{\sqrt{n}}\left|\sum\limits_{j=\tau_n+1}^{\tau_n+\lfloor n\delta_n\rfloor}\alpha_j(x, x_1)\right|\right]  \notag\\
&\leq \sqrt{\frac{1}{n}\E\left[\left(\sum_{j=\tau_n+1}^{\tau_n+\lfloor n\delta_n\rfloor}\alpha_j(x_2, x)\right)^2
\right]\frac{1}{n}\E\left[\left(\sum_{j=\tau_n+1}^{\tau_n+\lfloor n\delta_n \rfloor}\alpha_j(x, x_1)\right)^2\right]}\notag \\
&\leq \frac{1}{n}\sqrt{\sum\limits_{i=1}^{\lfloor n\delta_n\rfloor} \left(F_{\tilde{\psi}(X_{i+\tau_n})}(x)-F_{\tilde{\psi}(X_{i+\tau_n})}(x_1)\right)
}\sqrt{\sum\limits_{i=1}^{\lfloor n\delta_n\rfloor} \left(F_{\tilde{\psi}(X_{i+\tau_n})}(x_2)-F_{\tilde{\psi}(X_{i+\tau_n})}(x)\right)}\notag\\
&\leq \frac{1}{n}\sum\limits_{i=1}^{\lfloor n\delta_n\rfloor} \left(F_{\tilde{\psi}(X_{i+\tau_n})}(x_2)-F_{\tilde{\psi}(X_{i+\tau_n})}(x_1)\right)
.\label{eq:CS_inequality}
\end{align}
The Markov inequality yields
\begin{align*}
P\left(\left|S_j-S_i\right|\geq \lambda, \left|S_k-S_j\right|\geq \lambda\right)
&\leq P\left(\left|S_j-S_i\right|\left|S_k-S_j\right|\geq \lambda^2\right)\\
&\leq \frac{1}{\lambda^2}\E \left|S_j-S_i\right|\left|S_k-S_j\right|.
\end{align*}
Therefore, it follows  by \eqref{eq:CS_inequality} that
for some $\gamma\in (0, 1)$
\begin{align*}
&P\left(\left|S_j-S_i\right|\geq \lambda, \left|S_k-S_j\right|\geq \lambda\right)
\leq\frac{\delta_n^{\gamma}}{\lambda^2}\sum\limits_{i<l\leq k}u_l,
\end{align*}
where 
\begin{align*}
u_l:=\frac{\delta_n^{-\gamma}}{n}\sum\limits_{j=1}^{\lfloor n\delta_n\rfloor} \left(F_{\tilde{\psi}                 (X_{j+\tau_n})}\left(\tau+\frac{l}{m}\delta\right)-F_{\tilde{\psi}(X_{j+\tau_n})}\left(\tau+\frac{l-1}{m}\delta\right)\right).
\end{align*}
As a result, we  get
\begin{align}\label{inequ:M}
P\left(M_m^{''}
\geq \varepsilon\right)
\leq \frac{K}{\varepsilon^2}\frac{\delta_n^{-\gamma}}{n}\sum\limits_{j=1}^{\lfloor n\delta_n\rfloor} \left(F_{\tilde{\psi}(X_{j+\tau_n})}(\tau+\delta)-F_{\tilde{\psi}(X_{j+\tau_n})}(\tau)\right).
\end{align}
Define
\begin{align*}
\omega^{''}(\tilde{M}_n, [\tau, \tau +\delta]):=\sup\min\left\{|\tilde{M}_n(x)- \tilde{M}_n(x_1)|, |\tilde{M}_n(x_2)-\tilde{M}_n(x)|\right\},
\end{align*}
where the supremum extends over $x_1$, $x$, $x_2$ satisfying $\tau\leq x_1\leq x\leq x_2\leq \tau +\delta$.
Letting $m\longrightarrow \infty$ in \eqref{inequ:M}
yields
\begin{align}
P\left(\omega^{''}(\tilde{M}_n, [\tau, \tau +\delta])\geq \varepsilon\right)\leq \frac{K}{\varepsilon^2}\frac{\delta_n^{-\gamma}}{n}\sum\limits_{j=1}^{\lfloor n\delta_n\rfloor} \left(F_{\tilde{\psi}(X_{j+\tau_n})}(\tau+\delta)-F_{\tilde{\psi}(X_{j+\tau_n})}(\tau)\right)\label{eq:inequality_omega_tilde}
\end{align}
due to  right-continuity of  $\tilde{M}_n$.
Suppose that
$\delta=\frac{1}{u}$ for some integer $u$ and assume that
\begin{align}
&\omega^{''}(\tilde{M}_n, [2i\delta,  (2i+2)\delta])\leq \varepsilon,  \ 0\leq i\leq u-1,  \label{eq:interval1_tilde}
\\
&\omega^{''}(\tilde{M}_n, [(2i+1)\delta, (2i+3)\delta])\leq \varepsilon,  \ 0\leq i\leq u-2. \label{eq:interval2_tilde}
\end{align}
If $x_1\leq x \leq x_2$ and $x_2-x_1\leq \delta$, then $x_1$ and $x_2$ both lie in  one of the $2u-1$ intervals $[2i\delta, (2i+2)\delta]$, $0\leq i\leq u-1$,  $[(2i+1)\delta, (2i+3)\delta]$, $0\leq i\leq u-2$, so that
\begin{align*}
\min\left\{\left|\tilde{M}_n(x)-\tilde{M}_n(x_1)\right|, \left|\tilde{M}_n(x_2)-\tilde{M}_n(x)\right|\right\}\leq \varepsilon.
\end{align*}

Thus,  \eqref{eq:interval1_tilde} and \eqref{eq:interval2_tilde} together imply $\omega^{''}_{\tilde{M}_n}(\delta)\leq \varepsilon$.
It now follows by \eqref{eq:inequality_omega_tilde} that
\begin{align*}
P\left(\omega''_{\tilde{M}_n}(\delta)\geq\varepsilon\right)\leq\frac{K}{\varepsilon^2}\left(\Sigma' + \Sigma''\right),
\end{align*}
where each of $\Sigma'$ and $\Sigma ''$ is a sum of the form
\begin{align*}
&\sum\limits_{k=1}^r 
\frac{\delta_n^{-\gamma}}{n}\sum\limits_{j=1}^{\lfloor n\delta_n\rfloor} \left(F_{\tilde{\psi}(X_{j+\tau_n})}(x_k)-F_{\tilde{\psi}(X_{j+\tau_n})}(x_{k-1})\right)\\
&=
\frac{\delta_n^{-\gamma}}{n}\sum\limits_{j=1}^{\lfloor n\delta_n\rfloor}  \left(F_{\tilde{\psi}(X_{j+\tau_n})}(x_r)-F_{\tilde{\psi}(X_{j+\tau_n})}(x_{0})\right)\\
&\leq \delta_n^{1-\gamma}
\end{align*}
with $0\leq x_1\leq \ldots\leq x_r\leq 1$ and $x_k-x_{k-1}\leq 2\delta$.
Hence, we may conclude that
\begin{align*}
P\left(\omega''_{\tilde{M}_n}(\delta)\geq \varepsilon\right)\leq\frac{2K}{\varepsilon^2}\delta_n^{1-\gamma}.
\end{align*}
Since the right-hand side of the above inequality converges to $0$, \eqref{eq:Ivanoff_2}
has been proved.

\paragraph{Proving \eqref{eq:Ivanoff_1}.}
It remains to show \eqref{eq:Ivanoff_1}.
For this, let $S_{n}$ denote a canonical $1$-stopping time  for $X_n(x, t)$.
Note that
\begin{align*}
&\sup\limits_{0\leq t\leq 1}\left|X_n(S_n +\delta_n, t)-X_n(S_n, t)\right|=\sup\limits_{0\leq t\leq 1}\left|M^*_n(t)\right|
\end{align*}
with
\begin{align*}
M^*_n(t)&:=\frac{1}{\sqrt{n}}\sum\limits_{j=1}^{\lfloor nt\rfloor}\left(1_{\left\{h(S_n )<\psi(X_j)\leq h(S_n +\delta_n)\right\}}-\E\left(1_{\left\{h(S_n)<\psi(X_j)\leq h(S_n +\delta_n)\right\}}\left|\right. \mathcal{F}_{j-1}\right)\right)\\
&=\frac{1}{\sqrt{n}}\sum\limits_{j=1}^{\lfloor nt\rfloor}\left(1_{\left\{0< \tilde{\psi}(X_j)-S_n\leq \delta_n\right\}}-\E\left(1_{\left\{0<\tilde{\psi}(X_j)-S_n\leq \delta_n\right\}}\left|\right. \mathcal{F}_{j-1}\right)\right), 
\end{align*}
where $\tilde{\psi}:=h^{-1}\circ \psi$.

Prior to the proof of \eqref{eq:Ivanoff_1}, we establish the following result:
\begin{lemma}\label{lem: E_Var}
We have  $\E( M_n^*(t))=0$ for all $t\in \left[0, 1\right]$. For $s<t$
\begin{align*}
\Var (M_n^*(t)-M_n^*(s))&=\E\left[\left(M_n^*(t)-M_n^*(s)\right)^2\right]\\
&\leq \frac{1}{n}\sum\limits_{i=\lfloor ns\rfloor +1}^{\lfloor nt\rfloor}\left(F_{\tilde{\psi}(X_i)-S_n}(\delta_n)-F_{\tilde{\psi}(X_i)-S_n}(0)\right),
\end{align*}
where $F_{\tilde{\psi}(X_i)-S_n}(x):=P\left(\tilde{\psi}(X_i)-S_n\leq x\right)$.
\end{lemma}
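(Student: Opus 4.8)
The plan is to follow the proof of Lemma~\ref{lem: E_Var_2} almost verbatim: the present statement is its mirror image, with the summation window $\{1,\dots,\lfloor nt\rfloor\}$ now deterministic while the level is randomised through the canonical $1$-stopping time $S_n$. Write $M_n^*(t)=n^{-1/2}\sum_{j=1}^{\lfloor nt\rfloor}\beta_j$, with $B_j:=1_{\{0<\tilde\psi(X_j)-S_n\le\delta_n\}}$ and $\beta_j:=B_j-\E(B_j\mid\mathcal F_{j-1})$, where $\tilde\psi=h^{-1}\circ\psi$. Since $\beta_j$ is, by construction, the centring of $B_j$ relative to $\mathcal F_{j-1}$, we have $\E(\beta_j\mid\mathcal F_{j-1})=0$; in particular $\E(\beta_j)=0$, so $\E(M_n^*(t))=0$, which is the first assertion.

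For the variance bound with $s<t$, I would use $\E(M_n^*(t)-M_n^*(s))=0$ to pass from the variance to the second moment and then expand the square:
\begin{align*}
\E\Bigl[\bigl(M_n^*(t)-M_n^*(s)\bigr)^2\Bigr]
&=\frac1n\sum_{i=\lfloor ns\rfloor+1}^{\lfloor nt\rfloor}\E(\beta_i^2)
+\frac2n\!\!\sum_{\lfloor ns\rfloor+1\le i<j\le\lfloor nt\rfloor}\!\!\E(\beta_i\beta_j).
\end{align*}
The cross terms are handled exactly as the products $\alpha_{i+k}\alpha_{j+k}1_{\{\tau_n=k\}}$ in Lemma~\ref{lem: E_Var_2}. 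Because the filtration $\mathcal F_n(x,t)=\mathcal F_{\lfloor nt\rfloor}$ does not depend on $x$, a canonical $1$-stopping time $S_n$ is measurable with respect to $\mathcal F_0$, hence with respect to $\mathcal F_{j-1}$ for every $j\ge 1$; together with the $\mathcal F_i\subseteq\mathcal F_{j-1}$-measurability of $\tilde\psi(X_i)$ for $i<j$, this makes $B_i$, and therefore $\beta_i$, measurable with respect to $\mathcal F_{j-1}$. Consequently, for $i<j$,
\[
\E(\beta_i\beta_j)=\E\bigl(\beta_i\,\E(\beta_j\mid\mathcal F_{j-1})\bigr)=0 .
\]
(Equivalently, expand $\beta_i\beta_j$ into the four products $B_iB_j$, $-\,B_i\E(B_j\mid\mathcal F_{j-1})$, $-\,\E(B_i\mid\mathcal F_{i-1})B_j$ and $\E(B_i\mid\mathcal F_{i-1})\E(B_j\mid\mathcal F_{j-1})$, take $\E(\,\cdot\mid\mathcal F_{j-1})$, and observe that the four terms cancel in pairs, exactly as in Lemma~\ref{lem: E_Var_2}.)

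For the diagonal terms, since $B_j$ is an indicator we have $B_j^2=B_j$ and $\E\bigl(B_j\,\E(B_j\mid\mathcal F_{j-1})\bigr)=\E\bigl(\E(B_j\mid\mathcal F_{j-1})^2\bigr)$, so that
\begin{align*}
\E(\beta_j^2)
&=\E(B_j)-\E\!\left(\E(B_j\mid\mathcal F_{j-1})^2\right)\le\E(B_j)\\
&=P\bigl(0<\tilde\psi(X_j)-S_n\le\delta_n\bigr)
=F_{\tilde\psi(X_j)-S_n}(\delta_n)-F_{\tilde\psi(X_j)-S_n}(0).
\end{align*}
Summing over $j\in\{\lfloor ns\rfloor+1,\dots,\lfloor nt\rfloor\}$ and dividing by $n$ then yields the asserted inequality.

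The only step that is not a mechanical copy of the preceding lemma is the vanishing of the cross terms. In Lemma~\ref{lem: E_Var_2} this was obtained through the decomposition $\sum_k(\,\cdot\,)1_{\{\tau_n=k\}}$ over the finitely many values of the floor of a $2$-stopping time; that device is unavailable here because $S_n$ has a continuous range, and one must instead appeal directly to the adaptedness of the canonical $1$-stopping time $S_n$ --- its $\mathcal F_0$-measurability, which is also what legitimises the identity $\sup_{0\le t\le1}|X_n(S_n+\delta_n,t)-X_n(S_n,t)|=\sup_{0\le t\le1}|M_n^*(t)|$ used just before the statement --- in order to place $\beta_i$ inside $\mathcal F_{j-1}$ for $i<j$. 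With that in hand, the rest is a routine repetition of the computation in Lemma~\ref{lem: E_Var_2}.
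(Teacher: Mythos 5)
Your overall architecture (pass to the second moment, kill the cross terms, bound the diagonal terms by $\E(B_j)$) is the same as the paper's, and the diagonal estimate itself is unproblematic. The genuine gap is your treatment of the stopping time $S_n$. You claim that, because $\mathcal{F}_n(x,t)=\mathcal{F}_{\lfloor nt\rfloor}$ does not depend on $x$, a canonical $1$-stopping time is $\mathcal{F}_0$-measurable; that is a non sequitur. A $1$-stopping time is a stopping time in the $x$-direction, i.e. $\{S_n\le x\}\in\mathcal{F}_n(x,1)$ — this mirrors exactly the convention the paper uses for the $2$-stopping time, where it requires $\{T_n\le t\}\in\mathcal{F}_n(1,t)=\mathcal{F}_{\lfloor nt\rfloor}$. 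Since $\mathcal{F}_n(x,1)=\mathcal{F}_n$ for every $x$, the stopping property imposes no restriction at all in the $t$-direction: $S_n$ may depend on the whole sample $X_1,\dots,X_n$, and nothing makes it $\mathcal{F}_0$- (or $\mathcal{F}_{j-1}$-) measurable. Consequently $B_i=1_{\{0<\tilde{\psi}(X_i)-S_n\le\delta_n\}}$ need not be $\mathcal{F}_{j-1}$-measurable for $i<j$, and your cancellation $\E(\beta_i\beta_j)=\E\bigl(\beta_i\,\E(\beta_j\mid\mathcal{F}_{j-1})\bigr)=0$ collapses. The same issue already affects your first assertion: the compensator appearing in $M_n^*$ is the conditional expectation $\E\bigl(1_{\{\psi(X_j)\le x\}}\mid\mathcal{F}_{j-1}\bigr)$ computed at fixed $x$ and then evaluated at the random levels $x=h(S_n)$, $h(S_n+\delta_n)$; this coincides with the honest conditional expectation of $B_j$ given $\mathcal{F}_{j-1}$ only when $S_n$ is $\mathcal{F}_{j-1}$-measurable, so "$\E(\beta_j\mid\mathcal{F}_{j-1})=0$ by construction" is not available without the very measurability you have not established.

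The paper resolves this point by a different device, which you explicitly discard on incorrect grounds: it states (citing Ivanoff) that a canonical $1$-stopping time takes only countably many values $s_k$ — so your claim that "$S_n$ has a continuous range" contradicts the framework being used — and it decomposes $\alpha_i\alpha_j=\sum_k 1_{\{S_n=s_k\}}(\cdots)$, carrying out the martingale-difference algebra at each fixed deterministic level $s_k$, where $B_i(s_k)$ and $\E\bigl(B_i(s_k)\mid\mathcal{F}_{i-1}\bigr)$ for $i<j$ really are $\mathcal{F}_{j-1}$-measurable, before reassembling $S_n$. Without either this fixed-level decomposition or an actual proof of the measurability you assert, the vanishing of the cross terms, and hence the variance bound, is unproved. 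Note also that if $S_n$ were truly $\mathcal{F}_0$-measurable the lemma would be essentially trivial and the countable-range decomposition pointless — a further indication that this is not the intended meaning of "canonical $1$-stopping time" in Theorem S1.1.
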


\begin{proof}
Note that
\begin{align*}
&M_n^*(t)-M_n^*(s)
=\frac{1}{\sqrt{n}}\sum\limits_{j=\lfloor ns\rfloor + 1}^{\lfloor nt\rfloor}\alpha_j(\delta_n, 0),
\end{align*}
where
\begin{align}
\alpha_j(y, x):=1_{\left\{x<\tilde{\psi}(X_j)-S_n\leq y\right\}}-\E\left(1_{\left\{x<\tilde{\psi}(X_j)-S_n\leq y\right\}}\left|\right. \mathcal{F}_{j-1}\right). \label{eq:alpha}
\end{align}
Obviously,
\begin{align*}
\E\left(\sum_{j=\lfloor ns\rfloor +1}^{\lfloor n t\rfloor}\alpha_j(\delta_n, 0)\right)=0.
\end{align*}
Furthermore,
\begin{align*}
\Var\left(\sum_{j=\lfloor ns\rfloor +1}^{\lfloor n t\rfloor}\alpha_j(\delta_n, 0)\right)
&=\E\left[\left(\sum_{j=\lfloor ns\rfloor +1}^{\lfloor n t\rfloor}\alpha_j(\delta_n, 0)\right)^2
\right] \\
&=\sum\limits_{i=\lfloor ns\rfloor +1}^{\lfloor nt\rfloor}\sum\limits_{j=\lfloor ns\rfloor +1}^{\lfloor nt\rfloor}\E\left(\alpha_i(\delta_n, 0)\alpha_j(\delta_n, 0)\right).
\end{align*}
 The canonical $1$-stopping time $S_n$ takes on only countably many values $s_i$, $i\in \mathbb{N}$; see \cite{ivanoff1983}.
Note that for $i<j$
\begin{align*}
\alpha_i(\delta_n, 0)\alpha_j(\delta_n, 0)
&=\sum_{k=1}^{\infty}\alpha_i(\delta_n, 0)\alpha_j(\delta_n, 0)1_{\left\{S_n=s_k\right\}}\\
&=\sum_{k=1}^{\infty}1_{\left\{S_n=s_k\right\}}\biggl\{ 1_{\left\{0<\tilde{\psi}(X_i)-s_k\leq  \delta_n\right\}}1_{\left\{0<\tilde{\psi}(X_j)-s_k\leq  \delta_n\right\}}\\
&\quad +\E\left(1_{\left\{0<\tilde{\psi}(X_j)-s_k\leq  \delta_n\right\}}\E\left(1_{\left\{0<\tilde{\psi}(X_i)-s_k\leq  \delta_n\right\}}\left|\right. \mathcal{F}_{i-1}\right)\left|\right. \mathcal{F}_{j-1}\right)\\
&\quad-\E\left(1_{\left\{0<\tilde{\psi}(X_i)-s_k\leq  \delta_n\right\}}1_{\left\{0<\tilde{\psi}(X_j)-s_k\leq  \delta_n\right\}}\left|\right. \mathcal{F}_{j-1}\right)\\
&\quad -1_{\left\{0<\tilde{\psi}(X_j)-s_k\leq  \delta_n\right\}}\E\left(1_{\left\{0<\tilde{\psi}(X_i)-s_k\leq  \delta_n\right\}}\left|\right. \mathcal{F}_{i-1}\right)\biggr\}\\
&=1_{\left\{0<\tilde{\psi}(X_i)-S_n\leq  \delta_n\right\}}1_{\left\{0<\tilde{\psi}(X_j)-S_n\leq  \delta_n\right\}}\\
&\quad+\E\left(1_{\left\{0<\tilde{\psi}(X_j)-S_n\leq  \delta_n\right\}}\E\left(1_{\left\{0<\tilde{\psi}(X_i)-S_n\leq  \delta_n\right\}}\left|\right. \mathcal{F}_{i-1}\right)\left|\right. \mathcal{F}_{j-1}\right)\\
&\quad-\E\left(1_{\left\{0<\tilde{\psi}(X_i)-S_n\leq  \delta_n\right\}}1_{\left\{0<\tilde{\psi}(X_j)-S_n\leq  \delta_n\right\}}\left|\right. \mathcal{F}_{j-1}\right)\\
&\quad -1_{\left\{0<\tilde{\psi}(X_j)-S_n\leq  \delta_n\right\}}\E\left(1_{\left\{0<\tilde{\psi}(X_i)-S_n\leq  \delta_n\right\}}\left|\right. \mathcal{F}_{i-1}\right).
\end{align*}
As a result, $\E\left(\alpha_i(\delta_n, 0)\alpha_j(\delta_n, 0)\right)=0$.
Moreover, 
\begin{align*}
&\E\left(\alpha^2_j(\delta_n, 0)\right)\\
&=\E\left(1_{\left\{0<\tilde{\psi}(X_j)-S_n\leq  \delta_n\right\}}\right)-2\E\left(1_{\left\{0<\tilde{\psi}(X_j)-S_n\leq  \delta_n\right\}}\E\left(1_{\left\{0<\tilde{\psi}(X_j)-S_n\leq  \delta_n\right\}}\left|\right. \mathcal{F}_{j-1}\right)\right)\\
&\quad+\E\left(\E\left(1_{\left\{0<\tilde{\psi}(X_j)-S_n\leq  \delta_n\right\}}\E\left(1_{\left\{0<\tilde{\psi}(X_j)-S_n\leq  \delta_n\right\}}\left|\right. \mathcal{F}_{j-1}\right)\left|\right. \mathcal{F}_{j-1}\right)\right)\\
&=\E\left(1_{\left\{0<\tilde{\psi}(X_j)-S_n\leq  \delta_n\right\}}\right)-\E\left(1_{\left\{0<\tilde{\psi}(X_j)-S_n\leq  \delta_n\right\}}\E\left(1_{\left\{0<\tilde{\psi}(X_j)-S_n\leq  \delta_n\right\}}\left|\right. \mathcal{F}_{j-1}\right)\right)\\
&\leq 
\E\left(1_{\left\{0<\tilde{\psi}(X_j)-S_n\leq  \delta_n\right\}}\right)
=F_{\tilde{\psi}(X_j)-S_n}(\delta_n)-F_{\tilde{\psi}(X_j)-S_n}(0).
\end{align*}
As a result,
\begin{align*}
\Var\left(\sum_{j=\lfloor ns\rfloor + 1}^{\lfloor n t\rfloor}\alpha_j(\delta_n, 0)\right)
&=\sum\limits_{j=\lfloor ns\rfloor + 1}^{\lfloor nt\rfloor}\E\left(\alpha_j^2(\delta_n, 0)\right)\\
&\leq \sum\limits_{j=\lfloor ns\rfloor +1}^{\lfloor nt\rfloor}\left(F_{\tilde{\psi}(X_j)-S_n}(\delta_n)-F_{\tilde{\psi}(X_j)-S_n}(0)\right).
\end{align*}
\end{proof}

Lemma \ref{lem: E_Var} yields
$\E (M^*_n(t))=0$
and
\begin{align}\label{eq:Var}
\Var M^*_n(t)&\leq  t \E\left(\sup\limits_{0\leq x\leq 1}\left(F_{\lfloor nt\rfloor}(\delta_n+x)-F_{\lfloor nt\rfloor}(x)\right)\right), 
\end{align}
where $F_{\lfloor nt\rfloor}$ denotes the empirical distribution function of $\tilde{\psi}(X_i)$, $i\geq 1$, i.e.
\begin{align*}
F_{\lfloor nt\rfloor}(x):=\frac{1}{\lfloor nt\rfloor}\sum\limits_{i=1}^{\lfloor nt\rfloor}
1_{\left\{\tilde{\psi}(X_i)\leq x\right\}}.
\end{align*}
Note that $\sup_{0\leq x\leq 1}\left(F_{\lfloor nt\rfloor}(\delta_n+x)-F_{\lfloor nt\rfloor}(x)\right) $ converges to $0$ almost surely due to the Glivenko-Cantelli theorem for stationary, ergodic sequences and since $F_{\tilde{\psi}(X_1)}$ is a continuous distribution function. It follows that its expected value converges to $0$, as well. 
Therefore, the right-hand side of   \eqref{eq:Var}
converges to $0$.
Thus,
 $M^*_n(t)$  converges to $0$ in probability. This implies fidi-convergence  of $M^*_n$ as a process with values in $\spaceD\left[0, 1\right]$.
 Again, we make use of Theorem 15.4 in \cite{billingsley:1968}, i.e. we verify
 that for each positive $\varepsilon$ and $\eta$, there exists a $\delta$, $0<\delta <1$, and an integer $n_0$ such that
 \begin{align*}
 P\left(\omega^{''}_{M^*_n}(\delta)\geq \varepsilon\right)\leq \eta, \ n\geq n_0.
 \end{align*}

Define
\begin{align*}
M_m^{''}:=\max\limits_{0\leq i\leq j\leq k\leq m}\min\left\{|S_j-S_i|, |S_k-S_j|\right\},
\end{align*}
where $S_i=M^*_n(\tau+\frac{i}{m}\delta)$.

For $t_1\leq t\leq t_2$ we have
\begin{align*}
&\left|M^*_n(t)-M^*_n(t_1)\right|=\frac{1}{\sqrt{n}}\left|\sum\limits_{j=\lfloor nt_1\rfloor+1}^{\lfloor n t\rfloor}\alpha_j(\delta_n, 0)\right|,\\
&\left|M^*_n(t_2)-M^*_n(t)\right|
=\frac{1}{\sqrt{n}}\left|\sum\limits_{j=\lfloor nt\rfloor +1}^{\lfloor n t_2 \rfloor}\alpha_j(\delta_n, 0)\right|.
\end{align*}
The Cauchy-Schwarz inequality yields
\begin{align*}
&\E\left|M^*_n(t)-M^*_n(t_1)\right|\left|M^*_n(t_2)-M^*_n(t)\right|\\
&\leq\sqrt{\E\left[\frac{1}{n}\left(\sum\limits_{j=\lfloor n t_1\rfloor +1}^{\lfloor nt\rfloor}\alpha_j( \delta_n, 0)\right)^2\right]\E\left[
\frac{1}{n}\left(\sum\limits_{j=\lfloor nt\rfloor +1}^{\lfloor n t_2\rfloor}\alpha_j(\delta_n, 0)\right)^2\right]}\\
&\leq \frac{1}{n}\sum\limits_{i=\lfloor nt_1\rfloor +1}^{\lfloor nt_2\rfloor}\left(F_{\tilde{\psi}(X_i)-S_n}(\delta_n)-F_{\tilde{\psi}(X_i)-S_n}(0)\right).
\end{align*}
By the same argument as in the proof of \eqref{eq:Ivanoff_2} it follows that
\begin{align} \label{inequ:M_2}
P\left(M_m^{''}
\geq \varepsilon\right)
\leq 
\frac{K}{\varepsilon^2}\frac{\gamma_n^{-1}}{n}\sum\limits_{j=\lfloor n\tau\rfloor +1}^{\lfloor n(\tau+\delta)\rfloor} \left(F_{\tilde{\psi}(X_{j})-S_n}(\delta_n)-F_{\tilde{\psi}(X_{j})-S_n}(0)\right)
\end{align}
for any sequence $\gamma_n$, $n\in \mathbb{N}$, converging to $0$. 
Taking    the right-continuity of $M_n^*$ into consideration, we may, as before,  conclude that
\begin{align*}
P\left(\omega''_{M^*_n}(\delta)\geq\varepsilon\right)&\leq\frac{2K}{\varepsilon^2}\gamma_n^{-1}\frac{1}{n}\sum\limits_{j=1}^{n} \left(F_{\tilde{\psi}(X_{j})-S_n}(\delta_n)-F_{\tilde{\psi}(X_{j})-S_n}(0)\right)\\
&\leq \frac{2K}{\varepsilon^2}\gamma_n^{-1}\E\left(\sup\limits_{0\leq x\leq 1}\left(F_{n}(\delta_n+x)-F_{n}(x)\right)\right).
\end{align*}
If we choose $\gamma_n$, $n\in \mathbb{N}$, such that 
\begin{align*}
\gamma_n^{-1}\E\left(\sup\limits_{0\leq x\leq 1}\left(F_{n}(\delta_n+x)-F_{n}(x)\right)\right)
\end{align*}
converges to $0$, 
the right-hand side of the above inequality vanishes as $n$ tends to $\infty$ due to the choice of $\gamma
_n$. This concludes the proof of \eqref{eq:Ivanoff_1} as well as the proof of Lemma 3.6.
\end{proof}

\section{Simulations}\label{sec:simulations}

We computed the empirical size and the empirical power of the testing procedures on the basis of simulated time series 
according to the corresponding specifications in
Section 6 of the main document.
In addition to the simulation results referred to in the main document, the simulation tables contain the empirical power of the testing procedures when the change point is located closer to the testing region ($\tau=0.25$).
The rejection rates are reported in Tables \ref{table:cp_in_mean_normal_eps}
to 
\ref{table:cp_in_tail}.

%\subsection{Change in the mean}

%\subsection{Change in the variance}

%\subsection{Change in the tail}

\newgeometry{left=15mm,right=-15mm,top=15mm,bottom=35mm}
\begin{landscape}
\begin{table}[htbp]
\footnotesize
 \begin{threeparttable}
\begin{tabular}{ccrcccccccccccc}
& & & & \multicolumn{5}{c}{\textbf{CUSUM}}  &  &  \multicolumn{5}{c}{\textbf{self-norm. CUSUM}} \\
 \cline{5-9} \cline{11-15}\\
 &{$H$} & $n$   &  & $h=0$ &  &{$h = 0.5$} &{$h = 1$} &{$h = 2$} &  & $h=0$ &  &{$h = 0.5$} &{$h = 1$} &{$h = 2$} \\
 \multirow{11}{*}{ \rotatebox{90}{$\tau=0.25$}} & 0.6 & 500 &  & {0.046} &  & 0.214 & 0.803 & 1.000 &  & {0.043} &  & 0.201 & 0.597 & 0.951 \\
 &  & 1000 &  & {0.045} &  & 0.452 & 0.987 & 1.000 &  & {0.047} &  & 0.352 & 0.818 & 0.996 \\
 &  & 2000 &  & {0.043} &  & 0.816 & 1.000 & 1.000 &  & {0.044} &  & 0.575 & 0.957 & 1.000 \\
 & 0.7 & 500 &  & {0.064} &  & 0.222 & 0.792 & 1.000 &  & {0.044} &  & 0.223 & 0.594 & 0.951 \\
 &  & 1000 &  & {0.056} &  & 0.461 & 0.989 & 1.000 &  & {0.044} &  & 0.361 & 0.823 & 0.993 \\
 &  & 2000 &  & {0.050} &  & 0.815 & 1.000 & 1.000 &  & {0.046} &  & 0.583 & 0.960 & 1.000 \\
 & 0.8 & 500 &  & {0.074} &  & 0.204 & 0.822 & 1.000 &  & {0.039} &  & 0.264 & 0.643 & 0.944 \\
 &  & 1000 &  & {0.074} &  & 0.435 & 0.987 & 1.000 &  & {0.044} &  & 0.403 & 0.827 & 0.989 \\
 &  & 2000 &  & {0.072} &  & 0.814 & 1.000 & 1.000 &  & {0.048} &  & 0.621 & 0.948 & 0.999 \\
 & 0.9 & 500 &  & {0.095} &  & 0.190 & 0.849 & 1.000 &  & {0.045} &  & 0.391 & 0.720 & 0.939 \\
 &  & 1000 &  & {0.101} &  & 0.370 & 0.991 & 1.000 &  & {0.045} &  & 0.550 & 0.858 & 0.976 \\
 &  & 2000 &  & {0.101} &  & 0.847 & 1.000 & 1.000 &  & {0.043} &  & 0.694 & 0.931 & 0.997 \\
 &  & {} &  &  &  & {} & {} & {} &  &  &  & {} & {} & {} \\
 \multirow{11}{*}{ \rotatebox{90}{$\tau=0.5$}} & 0.6 & 500 &  &  &  & 0.384 & 0.958 & 1.000 &  &  &  & 0.372 & 0.853 & 0.998 \\
 &  & 1000 &  &  &  & 0.728 & 1.000 & 1.000 &  &  &  & 0.627 & 0.975 & 1.000 \\
 &  & 2000 &  &  &  & 0.958 & 1.000 & 1.000 &  &  &  & 0.856 & 0.998 & 1.000 \\
 &  0.7 & 500 &  &  &  & 0.392 & 0.962 & 1.000 &  &  &  & 0.395 & 0.858 & 0.995 \\
 &  & 1000 &  &  &  & 0.736 & 0.999 & 1.000 &  &  &  & 0.629 & 0.974 & 1.000 \\
 &  & 2000 &  &  &  & 0.964 & 1.000 & 1.000 &  &  &  & 0.869 & 0.997 & 1.000 \\
 & 0.8 & 500 &  &  &  & 0.369 & 0.969 & 1.000 &  &  &  & 0.462 & 0.860 & 0.993 \\
 &  & 1000 &  &  &  & 0.734 & 1.000 & 1.000 &  &  &  & 0.670 & 0.963 & 0.999 \\
 &  & 2000 &  &  &  & 0.966 & 1.000 & 1.000 &  &  &  & 0.851 & 0.996 & 1.000 \\
 & 0.9 & 500 &  &  &  & 0.311 & 0.977 & 1.000 &  &  &  & 0.583 & 0.875 & 0.980 \\
 &  & 1000 &  &  &  & 0.739 & 0.998 & 1.000 &  &  &  & 0.736 & 0.946 & 0.996 \\
 &  & 2000 &  &  &  & 0.979 & 1.000 & 1.000 &  &  &  & 0.860 & 0.987 & 1.000
 \end{tabular}
  \captionsetup{font=footnotesize }
\caption{Rejection rates of the CUSUM tests for LMSV  time series  (standard normal $\varepsilon_j,$ $j\geq 1$) of length $n$
 with Hurst parameter $H$  and a shift in the mean of height $h$ after a proportion $\tau$.
The calculations are based on 5,000 simulation runs.}
\label{table:cp_in_mean_normal_eps}
\end{threeparttable}
\end{table}
\end{landscape}
\restoregeometry

\newgeometry{left=15mm,right=-15mm,top=15mm,bottom=35mm}
\begin{landscape}
\begin{table}[htbp]
\footnotesize
 \begin{threeparttable}
\begin{tabular}{ccrcccccccccccccc}
& & & \multicolumn{7}{c}{\textbf{CUSUM}}  &  \multicolumn{7}{c}{\textbf{Wilcoxon}} \\
 \cline{5-10} \cline{12-17}\\
& & &
& &$\alpha=2.5$ & &
& $\alpha=4$& & & &
$\alpha=2.5$& & &
$\alpha=4$\\
& $H$ & $n$ & & $h=0$ & $h=0.5$ & $h=1$ & $h=0$ & $h=0.5$ & $h=1$ & & $h=0$ & $h=0.5$ & $h=1$ & $h=0$ & $h=0.5$ & $h=1$ \\
\cline{2-17}\\
 \multirow{11}{*}{ \rotatebox{90}{$\tau=0.25$}}&
0.6 & 500 & & 0.035 & 0.081 & 0.625 & 0.047 & 0.986 & 1.000 & &0.628 & 1.000 & 1.000 & 0.802 & 1.000 & 1.000 \\
& & 1000 & &{0.034} & 0.239 & 0.969 & {0.051} & 1.000 & 1.000 & &{0.578} & 1.000 & 1.000 &{0.751} & 1.000 & 1.000 \\
& & 2000 & &{0.034} & 0.638 & 1.000 &{0.048} & 1.000 & 1.000 &  &{0.524} & 1.000 & 1.000 &{0.713} & 1.000 & 1.000 \\
& $0.7$ &  500 & &{0.035} & 0.092 & 0.621 &{0.053} & 0.986 & 1.000 & &{0.331} & 1.000 & 1.000 &{0.475} & 1.000 & 1.000 \\
& & 1000 & &{0.037} & 0.239 & 0.961 &{0.058} & 1.000 & 1.000 & &{0.270} & 1.000 & 1.000 &{0.384} & 1.000 & 1.000 \\
& & 2000 & &{0.039} & 0.648 & 0.999 &{0.051} & 1.000 & 1.000 & &{0.207} & 1.000 & 1.000 &{0.300} & 1.000 & 1.000 \\
& 0.8 & 500 & &{0.045} & 0.096 & 0.622 &{0.073} & 0.984 & 1.000 & &{0.191} & 0.987 & 1.000 &{0.273} & 1.000 & 1.000 \\
& & 1000 &&{0.041} & 0.224 & 0.966 &{0.066} & 1.000 & 1.000 & &{0.144} & 0.997 & 1.000 &{0.187} & 1.000 & 1.000 \\
& & 2000 & &{0.042} & 0.637 & 0.999 &{0.066} & 1.000 & 1.000 & &{0.108} & 1.000 & 1.000 &{0.132} & 1.000 & 1.000 \\
&0.9 & 500 & &{0.057} & 0.100 & 0.627 &{0.080} & 0.986 & 1.000 & &{0.188} & 0.919 & 0.993 &{0.232} & 0.994 & 1.000 \\
& & 1000 & &{0.059} & 0.207 & 0.973 &{0.090} & 1.000 & 1.000 & &{0.139} & 0.929 & 0.995 &{0.165} & 0.999 & 1.000 \\
& & 2000 & &{0.064} & 0.635 & 1.000 &{0.092} & 1.000 & 1.000 & &{0.108} & 0.954 & 0.997 &{0.121} & 0.999 & 1.000 \\
\\
 \multirow{11}{*}{ \rotatebox{90}{$\tau=0.5$}}
 &0.6 & 500 & & & 0.048 & 0.213 &  & 0.866 & 1.000 &  & & 1.000 & 1.000 &  & 1.000 & 1.000 \\
& & 1000 & &  & 0.085 & 0.745 &  & 0.993 & 1.000 & & & 1.000 & 1.000 &  & 1.000 & 1.000 \\
& & 2000 & & & 0.288 & 0.986 &  & 1.000 & 1.000 &  & & 1.000 & 1.000 &  & 1.000 & 1.000 \\
&0.7 & 500 & &  & 0.050 & 0.209 &  & 0.864 & 1.000 & & & 1.000 & 1.000 &  & 1.000 & 1.000 \\
& & 1000 &  & & 0.087 & 0.752 &  & 0.994 & 1.000 & & & 1.000 & 1.000 &  & 1.000 & 1.000 \\
& & 2000 & &  & 0.285 & 0.983 &  & 1.000 & 1.000 &  & & 1.000 & 1.000 &  & 1.000 & 1.000 \\
&0.8 & 500 & &  & 0.055 & 0.207 &  & 0.879 & 1.000 & & & 0.974 & 1.000 &  & 1.000 & 1.000 \\
& & 1000 &  & & 0.108 & 0.757 &  & 0.994 & 1.000 &  & & 0.994 & 1.000 &  & 1.000 & 1.000 \\
& & 2000 &  & & 0.280 & 0.983 &  & 1.000 & 1.000 &  & & 1.000 & 1.000 &  & 1.000 & 1.000 \\
&0.9 & 500 &  & & 0.069 & 0.191 &  & 0.901 & 1.000 &  & & 0.863 & 0.984 &  & 0.995 & 1.000 \\
& & 1000 &  & & 0.111 & 0.783 &  & 0.992 & 1.000 &  & & 0.888 & 0.990 &  & 0.996 & 1.000 \\
& & 2000 &  & & 0.238 & 0.984 &  & 1.000 & 1.000 &  & & 0.917 & 0.996 &   &0.998 & 1.000
\end{tabular}
 \captionsetup{font=footnotesize }
\caption{Rejection rates of the  CUSUM and Wilcoxon  test for  LMSV  time series (Pareto distributed $\varepsilon_j,$ $j\geq 1$)   of length $n$ with Hurst parameter $H$, tail index $\alpha$  and a shift in the mean of height $h$ after a proportion $\tau$. The calculations are based on 5,000 simulation runs.}
\label{table:cp_in_mean}
 \end{threeparttable}
\end{table}
\end{landscape}
\restoregeometry

\newgeometry{left=15mm,right=-15mm,top=15mm,bottom=35mm}
\begin{landscape}
\begin{table}[htbp]
\footnotesize
 \begin{threeparttable}
\begin{tabular}{ccrcccccccccccccc}
& & & \multicolumn{7}{c}{\textbf{self-norm. CUSUM}}  &  \multicolumn{7}{c}{\textbf{self-norm. Wilcoxon}} \\
 \cline{5-10} \cline{12-17}\\
& & &
& &$\alpha=2.5$ & &
& $\alpha=4$& & & &
$\alpha=2.5$& & &
$\alpha=4$\\
& $H$ & $n$ & & $h=0$ & $h=0.5$ & $h=1$ & $h=0$ & $h=0.5$ & $h=1$ & & $h=0$ & $h=0.5$ & $h=1$ & $h=0$ & $h=0.5$ & $h=1$ \\
\cline{2-17}\\
 \multirow{11}{*}{ \rotatebox{90}{$\tau=0.25$}} & 0.6 & 500 &  & {0.046} & 0.181 & 0.539 & {0.042} & 0.688 & 0.958 &  & {0.032} & 0.879 & 0.991 & {0.030} & 0.995 & 1.000 \\
 &  & 1000 &  & {0.049} & 0.290 & 0.722 & {0.044} & 0.862 & 0.990 &  & {0.032} & 0.973 & 1.000 & {0.030} & 1.000 & 1.000 \\
 &  & 2000 &  & {0.053} & 0.458 & 0.875 & {0.026} & 0.967 & 0.999 &  & {0.034} & 0.999 & 1.000 & {0.028} & 1.000 & 1.000 \\
 & 0.7 & 500 &  & {0.051} & 0.204 & 0.552 & {0.042} & 0.697 & 0.954 &  & {0.029} & 0.680 & 0.938 & {0.021} & 0.960 & 0.997 \\
 &  & 1000 &  & {0.050} & 0.295 & 0.727 & {0.046} & 0.866 & 0.990 &  & {0.032} & 0.856 & 0.988 & {0.027} & 0.993 & 1.000 \\
 &  & 2000 &  & {0.049} & 0.455 & 0.868 & {0.042} & 0.966 & 0.998 &  & {0.037} & 0.948 & 0.999 & {0.030} & 0.999 & 1.000 \\
 & 0.8 & 500 &  & {0.045} & 0.226 & 0.580 & {0.044} & 0.720 & 0.951 &  & {0.031} & 0.424 & 0.772 & {0.021} & 0.815 & 0.964 \\
 &  & 1000 &  & {0.042} & 0.338 & 0.736 & {0.040} & 0.870 & 0.989 &  & {0.033} & 0.559 & 0.862 & {0.024} & 0.915 & 0.984 \\
 &  & 2000 &  & {0.050} & 0.498 & 0.881 & {0.052} & 0.960 & 0.998 &  & {0.034} & 0.673 & 0.938 & {0.023} & 0.958 & 0.998 \\
 & 0.9 & 500 &  & {0.044} & 0.329 & 0.645 & {0.041} & 0.760 & 0.947 &  & {0.031} & 0.309 & 0.582 & {0.020} & 0.640 & 0.861 \\
 &  & 1000 &  & {0.051} & 0.446 & 0.761 & {0.042} & 0.871 & 0.980 &  & {0.039} & 0.369 & 0.650 & {0.034} & 0.734 & 0.912 \\
 &  & 2000 &  & {0.041} & 0.585 & 0.869 & {0.048} & 0.949 & 0.996 &  & {0.049} & 0.422 & 0.719 & {0.039} & 0.791 & 0.947 \\
 &  & {} &  &  & {} & {} &  & {} & {} &  &  & {} & {} &  & {} & {} \\
 \multirow{11}{*}{ \rotatebox{90}{$\tau=0.5$}} & 0.6 & 500 &  &  & 0.384 & 0.801 &  & 0.904 & 0.990 &  &  & 0.994 & 1.000 &  & 1.000 & 1.000 \\
 &  & 1000 &  &  & 0.564 & 0.909 &  & 0.973 & 0.998 &  &  & 1.000 & 1.000 &  & 1.000 & 1.000 \\
 &  & 2000 &  &  & 0.744 & 0.962 &  & 0.993 & 1.000 &  &  & 1.000 & 1.000 &  & 1.000 & 1.000 \\
 & 0.7 & 500 &  &  & 0.401 & 0.801 &  & 0.902 & 0.989 &  &  & 0.950 & 1.000 &  & 1.000 & 1.000 \\
 &  & 1000 &  &  & 0.565 & 0.904 &  & 0.972 & 0.998 &  &  & 0.993 & 1.000 &  & 1.000 & 1.000 \\
 &  & 2000 &  &  & 0.744 & 0.966 &  & 0.994 & 0.999 &  &  & 1.000 & 1.000 &  & 1.000 & 1.000 \\
 & 0.8 & 500 &  &  & 0.424 & 0.804 &  & 0.899 & 0.990 &  &  & 0.776 & 0.977 &  & 0.987 & 0.999 \\
 &  & 1000 &  &  & 0.589 & 0.905 &  & 0.966 & 0.997 &  &  & 0.896 & 0.995 &  & 0.998 & 1.000 \\
 &  & 2000 &  &  & 0.761 & 0.959 &  & 0.994 & 0.999 &  &  & 0.963 & 0.999 &  & 1.000 & 1.000 \\
 & 0.9 & 500 &  &  & 0.527 & 0.815 &  & 0.893 & 0.982 &  &  & 0.622 & 0.890 &  & 0.912 & 0.990 \\
 &  & 1000 &  &  & 0.650 & 0.898 &  & 0.959 & 0.997 &  &  & 0.708 & 0.936 &  & 0.956 & 0.996 \\
 &  & 2000 &  &  & 0.781 & 0.954 &  & 0.989 & 0.999 &  &  & 0.779 & 0.960 &  & 0.976 & 0.998 \\
\end{tabular}
 \captionsetup{font=footnotesize }
\caption{Rejection rates of the self-normalized  CUSUM and the self-normalized Wilcoxon  test for  LMSV time series  (Pareto distributed $\varepsilon_j,$ $j\geq 1$) of length $n$ with Hurst parameter $H$, tail index $\alpha$  and a shift in the mean of height $h$ after a proportion $\tau$. The calculations are based on 5,000 simulation runs.}
\label{table:cp_in_mean_(SN)}
\end{threeparttable}
\end{table}
\end{landscape}
\restoregeometry

\newgeometry{left=15mm,right=-15mm,top=15mm,bottom=35mm}
\begin{landscape}
\begin{table}[htbp]
\footnotesize
 \begin{threeparttable}
\begin{tabular}{ccrcccccccccccccc}
& & & \multicolumn{7}{c}{\textbf{CUSUM}}  &  \multicolumn{7}{c}{\textbf{Wilcoxon}} \\
 \cline{5-10} \cline{12-17}\\
& & &
& &$\alpha=4.5$ & &
& $\alpha=6$& & & &
$\alpha=4.5$& & &
$\alpha=6$\\
& $H$ & $n$ & & $h=1$ & $h=0.5$ & $h=2$ & $h=1$ & $h=0.5$ & $h=2$ & & $h=1$ & $h=0.5$ & $h=2$ & $h=1$ & $h=0.5$ & $h=2$ \\
\cline{2-17}\\
 \multirow{11}{*}{ \rotatebox{90}{$\tau=0.25$}} & 0.6 & 500 &  & \multicolumn{1}{r}{0.464} & 0.252 & 0.963 & \multicolumn{1}{r}{0.467} & 0.270 & 0.978 &  & \multicolumn{1}{r}{0.119} & 0.925 & 0.937 & \multicolumn{1}{r}{0.130} & 0.929 & 0.929 \\
 &  & 1000 &  & \multicolumn{1}{r}{0.589} & 0.383 & 0.995 & \multicolumn{1}{r}{0.596} & 0.385 & 0.997 &  & \multicolumn{1}{r}{0.112} & 0.994 & 0.995 & \multicolumn{1}{r}{0.118} & 0.995 & 0.996 \\
 &  & 2000 &  & \multicolumn{1}{r}{0.708} & 0.529 & 1.000 & \multicolumn{1}{r}{0.694} & 0.574 & 1.000 &  & \multicolumn{1}{r}{0.108} & 1.000 & 1.000 & \multicolumn{1}{r}{0.104} & 1.000 & 1.000 \\
 & 0.7 & 500 &  & \multicolumn{1}{r}{0.330} & 0.164 & 0.852 & \multicolumn{1}{r}{0.330} & 0.174 & 0.882 &  & \multicolumn{1}{r}{0.078} & 0.584 & 0.587 & \multicolumn{1}{r}{0.078} & 0.606 & 0.591 \\
 &  & 1000 &  & \multicolumn{1}{r}{0.374} & 0.197 & 0.937 & \multicolumn{1}{r}{0.404} & 0.207 & 0.961 &  & \multicolumn{1}{r}{0.066} & 0.781 & 0.784 & \multicolumn{1}{r}{0.068} & 0.781 & 0.780 \\
 &  & 2000 &  & \multicolumn{1}{r}{0.431} & 0.263 & 0.983 & \multicolumn{1}{r}{0.443} & 0.273 & 0.991 &  & \multicolumn{1}{r}{0.060} & 0.932 & 0.929 & \multicolumn{1}{r}{0.061} & 0.934 & 0.936 \\
 & 0.8 & 500 &  & \multicolumn{1}{r}{0.235} & 0.116 & 0.670 & \multicolumn{1}{r}{0.244} & 0.111 & 0.686 &  & \multicolumn{1}{r}{0.074} & 0.314 & 0.328 & \multicolumn{1}{r}{0.074} & 0.332 & 0.319 \\
 &  & 1000 &  & \multicolumn{1}{r}{0.258} & 0.131 & 0.770 & \multicolumn{1}{r}{0.256} & 0.132 & 0.786 &  & \multicolumn{1}{r}{0.067} & 0.398 & 0.386 & \multicolumn{1}{r}{0.067} & 0.398 & 0.382 \\
 &  & 2000 &  & \multicolumn{1}{r}{0.275} & 0.139 & 0.837 & \multicolumn{1}{r}{0.271} & 0.137 & 0.861 &  & \multicolumn{1}{r}{0.060} & 0.502 & 0.499 & \multicolumn{1}{r}{0.059} & 0.505 & 0.501 \\
 & 0.9 & 500 &  & \multicolumn{1}{r}{0.179} & 0.088 & 0.470 & \multicolumn{1}{r}{0.170} & 0.084 & 0.486 &  & \multicolumn{1}{r}{0.088} & 0.251 & 0.254 & \multicolumn{1}{r}{0.088} & 0.252 & 0.262 \\
 &  & 1000 &  & \multicolumn{1}{r}{0.184} & 0.100 & 0.513 & \multicolumn{1}{r}{0.177} & 0.097 & 0.523 &  & \multicolumn{1}{r}{0.078} & 0.267 & 0.277 & \multicolumn{1}{r}{0.079} & 0.272 & 0.272 \\
 &  & 2000 &  & \multicolumn{1}{r}{0.191} & 0.101 & 0.564 & \multicolumn{1}{r}{0.191} & 0.099 & 0.566 &  & \multicolumn{1}{r}{0.072} & 0.283 & 0.283 & \multicolumn{1}{r}{0.065} & 0.279 & 0.277 \\
 &  & \multicolumn{1}{l}{} &  &  & \multicolumn{1}{l}{} & \multicolumn{1}{l}{} &  & \multicolumn{1}{l}{} & \multicolumn{1}{l}{} &  &  & \multicolumn{1}{l}{} & \multicolumn{1}{l}{} &  & \multicolumn{1}{l}{} & \multicolumn{1}{l}{} \\
 \multirow{11}{*}{ \rotatebox{90}{$\tau=0.5$}} & 0.6 & 500 &  &  & 0.377 & 0.954 &  & 0.417 & 0.967 &  &  & 0.988 & 0.990 &  & 0.990 & 0.990 \\
 &  & 1000 &  &  & 0.565 & 0.992 &  & 0.594 & 0.996 &  &  & 1.000 & 1.000 &  & 1.000 & 1.000 \\
 &  & 2000 &  &  & 0.774 & 0.999 &  & 0.814 & 0.999 &  &  & 1.000 & 1.000 &  & 1.000 & 1.000 \\
 & 0.7 & 500 &  &  & 0.252 & 0.821 &  & 0.252 & 0.839 &  &  & 0.808 & 0.808 &  & 0.806 & 0.809 \\
 &  & 1000 &  &  & 0.313 & 0.932 &  & 0.333 & 0.946 &  &  & 0.934 & 0.937 &  & 0.936 & 0.935 \\
 &  & 2000 &  &  & 0.416 & 0.984 &  & 0.436 & 0.987 &  &  & 0.992 & 0.992 &  & 0.990 & 0.991 \\
 & 0.8 & 500 &  &  & 0.170 & 0.623 &  & 0.166 & 0.651 &  &  & 0.529 & 0.525 &  & 0.515 & 0.528 \\
 &  & 1000 &  &  & 0.196 & 0.733 &  & 0.199 & 0.757 &  &  & 0.605 & 0.615 &  & 0.624 & 0.631 \\
 &  & 2000 &  &  & 0.226 & 0.838 &  & 0.230 & 0.848 &  &  & 0.741 & 0.738 &  & 0.746 & 0.734 \\
 & 0.9 & 500 &  &  & 0.127 & 0.440 &  & 0.132 & 0.445 &  &  & 0.410 & 0.419 &  & 0.404 & 0.406 \\
 &  & 1000 &  &  & 0.137 & 0.487 &  & 0.140 & 0.506 &  &  & 0.442 & 0.440 &  & 0.452 & 0.424 \\
 &  & 2000 &  &  & 0.135 & 0.542 &  & 0.148 & 0.540 &  &  & 0.466 & 0.467 &  & 0.468 & 0.473
\end{tabular}
 \captionsetup{font=footnotesize }
\caption{Rejection rates of the  CUSUM and Wilcoxon test for  LMSV  time series   of length $n$ with Hurst parameter $H$, tail index $\alpha$  and a shift in the variance of height $h^2$ after a proportion $\tau$. The calculations are based on 5,000 simulation runs.}
\label{table:cp_in_var}
\end{threeparttable}
\end{table}
\end{landscape}
\restoregeometry

\newgeometry{left=15mm,right=-15mm,top=15mm,bottom=35mm}
\begin{landscape}
\begin{table}[htbp]
\footnotesize
 \begin{threeparttable}
\begin{tabular}{ccrcccccccccccccc}
& & & \multicolumn{7}{c}{\textbf{self-norm. CUSUM}}  &  \multicolumn{7}{c}{\textbf{self-norm. Wilcoxon}} \\
 \cline{5-10} \cline{12-17}\\
& & &
& &$\alpha=4.5$ & &
& $\alpha=6$& & & &
$\alpha=4.5$& & &
$\alpha=6$\\
& $H$ & $n$ & & $h=1$ & $h=0.5$ & $h=2$ & $h=1$ & $h=0.5$ & $h=2$ & & $h=1$ & $h=0.5$ & $h=2$ & $h=1$ & $h=0.5$ & $h=2$ \\
\cline{2-17}\\
 \multirow{11}{*}{ \rotatebox{90}{$\tau=0.25$}} & 0.6 & 500 &  & \multicolumn{1}{r}{0.035} & 0.229 & 0.040 & \multicolumn{1}{r}{0.038} & 0.237 & 0.043 &  & \multicolumn{1}{r}{0.040} & 0.518 & 0.495 & \multicolumn{1}{r}{0.041} & 0.511 & 0.494 \\
 &  & 1000 &  & \multicolumn{1}{r}{0.033} & 0.291 & 0.043 & \multicolumn{1}{r}{0.035} & 0.317 & 0.050 &  & \multicolumn{1}{r}{0.043} & 0.736 & 0.739 & \multicolumn{1}{r}{0.047} & 0.731 & 0.745 \\
 &  & 2000 &  & \multicolumn{1}{r}{0.033} & 0.383 & 0.048 & \multicolumn{1}{r}{0.034} & 0.423 & 0.060 &  & \multicolumn{1}{r}{0.044} & 0.901 & 0.897 & \multicolumn{1}{r}{0.042} & 0.908 & 0.905 \\
 & 0.7 & 500 &  & \multicolumn{1}{r}{0.022} & 0.146 & 0.022 & \multicolumn{1}{r}{0.024} & 0.161 & 0.025 &  & \multicolumn{1}{r}{0.046} & 0.252 & 0.248 & \multicolumn{1}{r}{0.048} & 0.263 & 0.259 \\
 &  & 1000 &  & \multicolumn{1}{r}{0.022} & 0.192 & 0.023 & \multicolumn{1}{r}{0.025} & 0.225 & 0.027 &  & \multicolumn{1}{r}{0.044} & 0.380 & 0.381 & \multicolumn{1}{r}{0.046} & 0.380 & 0.378 \\
 &  & 2000 &  & \multicolumn{1}{r}{0.020} & 0.263 & 0.030 & \multicolumn{1}{r}{0.021} & 0.296 & 0.034 &  & \multicolumn{1}{r}{0.049} & 0.526 & 0.528 & \multicolumn{1}{r}{0.050} & 0.531 & 0.536 \\
 & 0.8 & 500 &  & \multicolumn{1}{r}{0.015} & 0.086 & 0.013 & \multicolumn{1}{r}{0.019} & 0.098 & 0.015 &  & \multicolumn{1}{r}{0.038} & 0.120 & 0.119 & \multicolumn{1}{r}{0.042} & 0.136 & 0.127 \\
 &  & 1000 &  & \multicolumn{1}{r}{0.015} & 0.107 & 0.017 & \multicolumn{1}{r}{0.016} & 0.128 & 0.017 &  & \multicolumn{1}{r}{0.049} & 0.159 & 0.155 & \multicolumn{1}{r}{0.045} & 0.165 & 0.156 \\
 &  & 2000 &  & \multicolumn{1}{r}{0.015} & 0.140 & 0.014 & \multicolumn{1}{r}{0.017} & 0.171 & 0.016 &  & \multicolumn{1}{r}{0.047} & 0.198 & 0.192 & \multicolumn{1}{r}{0.046} & 0.197 & 0.198 \\
 & 0.9 & 500 &  & \multicolumn{1}{r}{0.019} & 0.079 & 0.018 & \multicolumn{1}{r}{0.023} & 0.088 & 0.021 &  & \multicolumn{1}{r}{0.048} & 0.100 & 0.101 & \multicolumn{1}{r}{0.049} & 0.097 & 0.096 \\
 &  & 1000 &  & \multicolumn{1}{r}{0.019} & 0.099 & 0.015 & \multicolumn{1}{r}{0.021} & 0.112 & 0.020 &  & \multicolumn{1}{r}{0.053} & 0.109 & 0.107 & \multicolumn{1}{r}{0.050} & 0.110 & 0.105 \\
 &  & 2000 &  & \multicolumn{1}{r}{0.020} & 0.123 & 0.020 & \multicolumn{1}{r}{0.023} & 0.133 & 0.023 &  & \multicolumn{1}{r}{0.056} & 0.118 & 0.114 & \multicolumn{1}{r}{0.047} & 0.104 & 0.123 \\
 &  & \multicolumn{1}{l}{} &  &  & \multicolumn{1}{l}{} & \multicolumn{1}{l}{} &  & \multicolumn{1}{l}{} & \multicolumn{1}{l}{} &  &  & \multicolumn{1}{l}{} & \multicolumn{1}{l}{} &  & \multicolumn{1}{l}{} & \multicolumn{1}{l}{} \\
 \multirow{11}{*}{ \rotatebox{90}{$\tau=0.5$}} & 0.6 & 500 &  &  & 0.145 & 0.152 &  & 0.158 & 0.164 &  &  & 0.816 & 0.831 &  & 0.823 & 0.816 \\
 &  & 1000 &  &  & 0.206 & 0.193 &  & 0.227 & 0.221 &  &  & 0.965 & 0.963 &  & 0.962 & 0.960 \\
 &  & 2000 &  &  & 0.277 & 0.270 &  & 0.324 & 0.318 &  &  & 0.998 & 0.998 &  & 0.996 & 0.996 \\
 & 0.7 & 500 &  &  & 0.090 & 0.090 &  & 0.100 & 0.093 &  &  & 0.524 & 0.530 &  & 0.534 & 0.521 \\
 &  & 1000 &  &  & 0.125 & 0.121 &  & 0.130 & 0.135 &  &  & 0.701 & 0.695 &  & 0.698 & 0.699 \\
 &  & 2000 &  &  & 0.165 & 0.168 &  & 0.196 & 0.193 &  &  & 0.859 & 0.863 &  & 0.854 & 0.863 \\
 & 0.8 & 500 &  &  & 0.045 & 0.046 &  & 0.053 & 0.054 &  &  & 0.270 & 0.274 &  & 0.270 & 0.278 \\
 &  & 1000 &  &  & 0.063 & 0.064 &  & 0.062 & 0.070 &  &  & 0.357 & 0.361 &  & 0.350 & 0.361 \\
 &  & 2000 &  &  & 0.073 & 0.086 &  & 0.099 & 0.083 &  &  & 0.439 & 0.443 &  & 0.454 & 0.454 \\
 & 0.9 & 500 &  &  & 0.044 & 0.037 &  & 0.047 & 0.040 &  &  & 0.200 & 0.205 &  & 0.195 & 0.198 \\
 &  & 1000 &  &  & 0.053 & 0.052 &  & 0.064 & 0.058 &  &  & 0.228 & 0.224 &  & 0.246 & 0.228 \\
 &  & 2000 &  &  & 0.066 & 0.065 &  & 0.076 & 0.074 &  &  & 0.263 & 0.248 &  & 0.248 & 0.264
\end{tabular}
 \captionsetup{font=footnotesize }
\caption{Rejection rates of the self-normalized CUSUM and the self-normalized Wilcoxon  test for  LMSV  time series   of length $n$ with Hurst parameter $H$, tail index $\alpha$  and a shift in the variance of height $h^2$ after a proportion $\tau$. The calculations are based on 5,000 simulation runs.}
\label{table:cp_in_var (SN)}
\end{threeparttable}
\end{table}
\end{landscape}
\restoregeometry

\newgeometry{left=15mm,right=-15mm,top=15mm,bottom=35mm}
\begin{landscape}
\begin{table}[htbp]
\footnotesize
 \begin{threeparttable}
\begin{tabular}{ccrcccccccccccccc}
& & & \multicolumn{7}{c}{\textbf{CUSUM}}  &  \multicolumn{7}{c}{\textbf{self-norm. CUSUM}} \\
\cline{5-10} \cline{12-17}\\
& & &
&  & $\alpha=0.5$ & &
& $\alpha=1$& & & &
$\alpha=0.5$& & &
$\alpha=1$\\
& $H$ & $n$ & & $h=0$ & $h=0.25$ & $h=0.5$ & $h=0$ & $h=0.25$ & $h=0.5$ & & $h=0$ & $h=0.25$ & $h=0.5$ & $h=0$ & $h=0.25$ & $h=0.5$ \\
\cline{2-17}\\
 \multirow{11}{*}{ \rotatebox{90}{$\tau=0.25$}}  & 0.6 & 500 &  & {0.457} & 0.884 & 0.994 & {0.139} & 0.215 & 0.393 &  & {0.035} & 0.373 & 0.725 & {0.040} & 0.087 & 0.181 \\
 &  & 1000 &  & {0.419} & 0.982 & 1.000 & {0.121} & 0.298 & 0.588 &  & {0.033} & 0.596 & 0.912 & {0.045} & 0.124 & 0.277 \\
 &  & 2000 &  & {0.388} & 0.999 & 1.000 & {0.113} & 0.436 & 0.825 &  & {0.038} & 0.819 & 0.985 & {0.045} & 0.206 & 0.453 \\
 & 0.7 & 500 &  & {0.213} & 0.601 & 0.879 & {0.084} & 0.108 & 0.184 &  & {0.026} & 0.189 & 0.435 & {0.041} & 0.065 & 0.100 \\
 &  & 1000 &  & {0.177} & 0.761 & 0.977 & {0.071} & 0.134 & 0.249 &  & {0.038} & 0.294 & 0.596 & {0.043} & 0.079 & 0.137 \\
 &  & 2000 &  & {0.141} & 0.907 & 0.998 & {0.071} & 0.151 & 0.350 &  & {0.040} & 0.452 & 0.774 & {0.047} & 0.098 & 0.191 \\
 & 0.8 & 500 &  & {0.131} & 0.329 & 0.590 & {0.064} & 0.081 & 0.105 &  & {0.028} & 0.092 & 0.194 & {0.041} & 0.046 & 0.056 \\
 &  & 1000 &  & {0.107} & 0.379 & 0.716 & {0.064} & 0.081 & 0.115 &  & {0.034} & 0.130 & 0.276 & {0.047} & 0.053 & 0.064 \\
 &  & 2000 &  & {0.087} & 0.491 & 0.836 & {0.054} & 0.083 & 0.138 &  & {0.040} & 0.177 & 0.350 & {0.046} & 0.050 & 0.078 \\
 & 0.9 & 500 &  & {0.087} & 0.201 & 0.376 & {0.056} & 0.058 & 0.079 &  & {0.030} & 0.073 & 0.133 & {0.047} & 0.044 & 0.058 \\
 &  & 1000 &  & {0.075} & 0.216 & 0.425 & {0.054} & 0.068 & 0.080 &  & {0.038} & 0.083 & 0.157 & {0.050} & 0.055 & 0.060 \\
 &  & 2000 &  & {0.061} & 0.221 & 0.480 & {0.051} & 0.061 & 0.074 &  & {0.046} & 0.101 & 0.189 & {0.052} & 0.055 & 0.058 \\
\\
 \multirow{11}{*}{ \rotatebox{90}{$\tau=0.5$}}  & 0.6 & 500 &  &  & 0.971 & 1.000 &  & 0.312 & 0.597 &  &  & 0.603 & 0.907 &  & 0.137 & 0.318 \\
 &  & 1000 &  &  & 0.997 & 1.000 &  & 0.446 & 0.812 &  &  & 0.843 & 0.989 &  & 0.227 & 0.517 \\
 &  & 2000 &  &  & 1.000 & 1.000 &  & 0.628 & 0.953 &  &  & 0.974 & 1.000 &  & 0.363 & 0.732 \\
 & 0.7 & 500 &  &  & 0.794 & 0.976 &  & 0.161 & 0.296 &  &  & 0.364 & 0.692 &  & 0.085 & 0.167 \\
 &  & 1000 &  &  & 0.916 & 0.998 &  & 0.188 & 0.402 &  &  & 0.551 & 0.869 &  & 0.120 & 0.248 \\
 &  & 2000 &  &  & 0.982 & 1.000 &  & 0.241 & 0.531 &  &  & 0.750 & 0.969 &  & 0.157 & 0.344 \\
 & 0.8 & 500 &  &  & 0.509 & 0.812 &  & 0.107 & 0.164 &  &  & 0.195 & 0.414 &  & 0.057 & 0.091 \\
 &  & 1000 &  &  & 0.611 & 0.902 &  & 0.109 & 0.192 &  &  & 0.296 & 0.539 &  & 0.065 & 0.106 \\
 &  & 2000 &  &  & 0.709 & 0.957 &  & 0.123 & 0.227 &  &  & 0.383 & 0.671 &  & 0.082 & 0.135 \\
 & 0.9 & 500 &  &  & 0.339 & 0.611 &  & 0.070 & 0.106 &  &  & 0.135 & 0.288 &  & 0.059 & 0.083 \\
 &  & 1000 &  &  & 0.360 & 0.656 &  & 0.068 & 0.121 &  &  & 0.181 & 0.350 &  & 0.059 & 0.092 \\
 &  & 2000 &  &  & 0.391 & 0.716 &  & 0.071 & 0.127 &  &  & 0.216 & 0.410 &  & 0.060 & 0.085
\end{tabular}
 \captionsetup{font=footnotesize }
\caption{Rejection rates of the CUSUM tests for  LMSV time series   of length $n$ with Hurst parameter $H$ and a change in the tail index $\alpha$ of height $h$ after a proportion $\tau$. The calculations are based on 5,000 simulation runs. }
\label{table:cp_in_tail}
\end{threeparttable}
\end{table}
\end{landscape}
\restoregeometry

\singlespace
\small
\bibliographystyle{apalike}
\bibliography{bib-changepoint}

\begin{thebibliography}{}

\bibitem[Andrews, 1993]{andrews:1993}
Andrews, D. W.~K. (1993).
\newblock {Tests for parameter instability and structural change with unknown
  change point}.
\newblock {\em Econometrica}, 61:821--856.

\bibitem[Beran et~al., 2013]{beran:kulik:2013}
Beran, J., Feng, Y., Ghosh, S., and Kulik, R. (2013).
\newblock {\em {Long memory processes}}.
\newblock Springer.

\bibitem[Berkes et~al., 2006]{berkes:horvath:kokoszka:shao:2006}
Berkes, I., Horv\'ath, L., Kokoszka, P., and Shao, Q.-M. (2006).
\newblock {On discriminating between long-range dependence and changes in
  mean}.
\newblock {\em Annals of Statistics}, 34(3):1140--1165.

\bibitem[Betken, 2016a]{betken:2016b}
Betken, A. (2016a).
\newblock {Change point estimation based on Wilcoxon tests in the presence of
  long-range dependence}.
\newblock {\em arXiv preprint arXiv:1612.08261}.

\bibitem[Betken, 2016b]{betken:2016}
Betken, A. (2016b).
\newblock {Testing for change-points in long-range dependent time series by
  means of a self-normalized Wilcoxon test}.
\newblock {\em Journal of Time Series Analysis}, 20(6):785--809.

\bibitem[Billingsley, 1968]{billingsley:1968}
Billingsley, P. (1968).
\newblock {\em {Convergence of Probability Measures}}.
\newblock John Wiley \& Sons, Inc.

\bibitem[Billingsley, 1999]{billingsley:1999}
Billingsley, P. (1999).
\newblock {\em {Convergence of Probability Measures}}.
\newblock John Wiley \& Sons, Inc.

\bibitem[Breidt et~al., 1998]{breidt:crato:delima:1998}
Breidt, F.~J., Crato, N., and de~Lima, P. (1998).
\newblock {The detection and estimation of long memory in stochastic
  volatility}.
\newblock {\em Journal of Econometrics}, 83(1-2):325--348.

\bibitem[Cs\"org\"o and Horvath, 1997]{csorgo:horvath:1997}
Cs\"org\"o, M. and Horvath, L. (1997).
\newblock {\em {Limit Theorems in Change-Point Analysis}}.
\newblock Wiley, New York.

\bibitem[De~Pooter and Van~Dijk, 2004]{pooter:vandijk:2004}
De~Pooter, M. and Van~Dijk, D. (2004).
\newblock {Testing for changes in volatility in heteroskedastic time series --
  a further examination}.
\newblock Technical report.

\bibitem[Dehling et~al., 2013]{dehling:rooch:taqqu:2013}
Dehling, H., Rooch, A., and Taqqu, M.~S. (2013).
\newblock {Non-Parametric Change-Point Tests for Long-Range Dependent Data}.
\newblock {\em Scandinavian Journal of Statistics}, 40(1):153---173.

\bibitem[Dehling and Taqqu, 1989]{dehling:taqqu:1989}
Dehling, H. and Taqqu, M. (1989).
\newblock {The empirical process of some long-range dependent sequences with an
  application to $U$-statistics}.
\newblock {\em The Annals of Statistics}, 17(4):1767---1783.

\bibitem[Dvoretzky, 1972]{dvoretzky:1972}
Dvoretzky, A. (1972).
\newblock {Asymptotic normality for sums of dependent random variables}.
\newblock {\em Proceedings of the Sixth Berkeley Symposium on Mathematical
  Statistics and Probability}, 2:513--535.

\bibitem[Giraitis et~al., 1996]{giraitis:leipus:surgailis:1996}
Giraitis, L., Leipus, R., and Surgailis, D. (1996).
\newblock {The change-point problem for dependent observations}.
\newblock {\em Journal of Statistical Planning and Inference}, 53:297 -- 310.

\bibitem[Horv\'ath and Kokoszka, 1997]{horvath:kokoszka:1997}
Horv\'ath, L. and Kokoszka, P. (1997).
\newblock {The effect of long-range dependence on change-point estimators}.
\newblock {\em Journal of Statistical Planning and Inference}, 64(1):57--81.

\bibitem[Ivanoff, 1983]{ivanoff1983}
Ivanoff, G. (1983).
\newblock {Stopping times and tightness in two dimensions}.
\newblock {\em Technical Report Series of the Laboratory for Research in
  Statistics and Probability}, 1:46--65.

\bibitem[Kulik and Soulier, 2011]{kulik:soulier:2011}
Kulik, R. and Soulier, P. (2011).
\newblock {The tail empirical process for long memory stochastic volatility
  sequences}.
\newblock {\em Stochastic Processes and their Applications}, 121(1):109 -- 134.

\bibitem[Lobato, 2001]{lobato:2001}
Lobato, I.~N. (2001).
\newblock {Testing that a dependent process is uncorrelated}.
\newblock {\em Journal of the American Statistical Association}, 96:1066--1076.

\bibitem[Resnick, 2007]{resnick:2007}
Resnick, S.~I. (2007).
\newblock {\em {Heavy-tail phenomena}}.
\newblock Springer Series in Operations Research and Financial Engineering.
  Springer, New York.
\newblock Probabilistic and statistical modeling.

\bibitem[Shao, 2011]{shao:2011}
Shao, X. (2011).
\newblock {A simple test of changes in mean in the possible presence of
  long-range dependence}.
\newblock {\em Journal of Time Series Analysis}, 32:598 -- 606.

\bibitem[Shao and Zhang, 2010]{shao:zhang:2010}
Shao, X. and Zhang, X. (2010).
\newblock {Testing for change points in time series}.
\newblock {\em Journal of The American Statistical Association}, 105:1228 --
  1240.

\bibitem[Sun et~al., 2008]{sun:phillips:jin:2008}
Sun, Y., Phillips, P. C.~B., and Jin, S. (2008).
\newblock {Optimal bandwidth selection in heteroskedasticity autocorrelation
  robust testing}.
\newblock {\em Econometrica}, 76:175--194.

\bibitem[Taqqu, 1979]{taqqu:1979}
Taqqu, M.~S. (1979).
\newblock {Convergence of integrated processes of arbitrary Hermite rank}.
\newblock {\em Zeitschrift f\"ur Wahrscheinlichkeitstheorie und Verwandte
  Gebiete}, 50(1):53--83.

\bibitem[Tewes, 2015]{tewes:2015}
Tewes, J. (2015).
\newblock {Change-point tests under local alternatives for long-range dependent
  processes}.
\newblock {\em arXiv preprint arXiv:1506.07296}.

\end{thebibliography}

\end{document}